\documentclass[preprint,12pt]{imsart}

\RequirePackage[OT1]{fontenc}
\RequirePackage{amsthm,amsmath}
\RequirePackage[numbers]{natbib}
\RequirePackage[colorlinks,citecolor=blue,urlcolor=blue]{hyperref}

\usepackage{amscd,amsfonts,amssymb,amsmath,latexsym,array,hhline,xcolor,graphicx}
\usepackage{float}
\usepackage{appendix}
\usepackage{booktabs}
\usepackage{caption}
\usepackage{bm}
\usepackage{epstopdf}

\newcommand\cF{{\cal F}}

\newcommand\cB{{\cal B}}
\newcommand\cH{{\cal H}}

\newcommand\cN{{\cal N}}

\newcommand\cR{{\cal R}}
\newcommand\cO{{\cal O}}
\newcommand\cP{{\cal P}}

\newcommand\R{{\bf R}}
\newcommand\Q{{\bf Q}}

\newcommand\G{{\bf G}}

\newcommand\N{\mathbb{N}}

\newcommand\C{{\rm C}}

\newtheorem{theo}{Theorem}[section]
\newtheorem{prop}[theo]{Proposition}
\newtheorem{lemm}[theo]{Lemma}
\newtheorem{defi}[theo]{Definition}
\newtheorem{ex}[theo]{Example}
\newtheorem{Assum}{Assumption}

\newtheorem{coro}[theo]{Corollary}
\newtheorem{rem}[theo]{Remark}
\newcommand\fdem{$\Box$}
\newcommand\beq{\begin{equation}}
\newcommand\eeq{\end{equation}}

\newcommand\bea{\begin{eqnarray}}
\newcommand\eea{\end{eqnarray}}
\newcommand\bean{\begin{eqnarray*}}
\newcommand\eean{\end{eqnarray*}}

\DeclareMathOperator{\essinf}{ess\;inf}
\DeclareMathOperator{\esssup}{ess\;sup}

%%%%%%%%%%%%%%%

\newcommand\bP{{\bf {\rm P}}}

\pagestyle{plain}

\begin{document}

\begin{frontmatter}

\title{Dynamic programming principle and computable prices in financial market models with transaction costs.}

\author[A1]{ Emmanuel LEPINETTE,}
\author[A2]{ Duc Thinh Vu}

\address[A1]{ Ceremade, UMR  CNRS 7534,  Paris Dauphine University, PSL National Research, Place du Mar\'echal De Lattre De Tassigny, \\
75775 Paris cedex 16, France and\\
Gosaef, Faculty of Sciences of Tunis, Tunisia.\\
Email: emmanuel.lepinette@ceremade.dauphine.fr
}
\address[A2]{ Ceremade, UMR  CNRS 7534,  Paris Dauphine University, PSL National Research, Place du Mar\'echal De Lattre De Tassigny, \\
75775 Paris cedex 16, France.\\
Email: vu@ceremade.dauphine.fr
}

\begin{abstract} How to compute (super) hedging costs in rather general financial market models with transaction costs in discrete-time ? Despite the huge literature on this topic, most of results are characterizations of the super-hedging prices while it remains difficult to deduce numerical procedure to estimate them. We establish here a dynamic programming principle and we prove that it is possible to implement it under some conditions on the conditional supports of the price and volume processes for a large class of market models including convex costs such as order books but also non convex costs, e.g. fixed cost models. \end{abstract}

\begin{keyword} Hedging  costs \sep European options \sep Dynamic programming principle \sep No-arbitrage condition  \sep  AIP condition \sep Random set theory \sep Lower semicontinuity. \smallskip

\noindent {Mathematics Subject Classification (2010): 49J53 \sep 60D05 \sep 91G20\sep 91G80.}

\noindent {JEL Classification: C02 \sep  C61 \sep  G13}
\end{keyword}

\end{frontmatter}

\section{Introduction}
The problem of characterizing the set of all possible prices  hedging a European claim has been extensively studied in the literature under classical no-arbitrage conditions. In discrete-time and without transaction costs, a dual characterization is deduced through  dual elements, the equivalent martingale measures, whose existence characterizes the well known no-arbitrage condition NA, see the FTAP theorem of \cite{DMW}. In continuous time,  similar characterizations are obtained under the NFLVR condition of  Delbaen and Schachermayer \cite{DScha1},  \cite{DScha2}  for instance. The Black and Scholes model \cite{BS} is the canonical example of complete market in mathematical finance such that the equivalent probability measure is unique. The advantage of this simple model is that  hedging prices  are explicitly given. Unfortunately, for incomplete market models, it is difficult to establish numerical procedures to estimate the super-hedging prices from the dual characterization. This is why it is usual to specify a particular martingale measure, see \cite{Sch}, \cite{Frit} and \cite{JKM}. 

In the presence of transaction costs, the financial market is a priori incomplete and computing the infimum super-hedging prices remains a challenge. In the Kabanov model with transaction costs \cite{KS}, the main result is a dual characterization \cite{KS}[Theorem 3.3] through the so-called consistent price systems (CPS) that characterize various kinds of no-arbitrage conditions for these models, see \cite{KS}[Section 3.2]. Unfortunately, it is difficult to identify the consistent price systems and deduce a numerical estimation of the prices. A first attempt (and the only one) is proposed in \cite{LR} for finite probability spaces. More generally, vector optimization methods are proposed for risk measures as in \cite{CM} still for finite probability spaces. Also, various asymptotic results are obtained for small transaction costs by Schachermayer \cite{SCh}, \cite{GRS} and others \cite{Lep5}, \cite{PergL}, still for conic models. 

For non conic models, in the presence of an order book for instance, more generally with convex cost,  or with fixed costs, few results are available in the literature. Well known papers such as \cite{JN}, \cite{PP}, \cite{Pennanen} , \cite{LT}, \cite{LT1} only formulate characterizations of the super-hedging prices. The very question we aim to address in this paper is how to numerically  compute the infimum super-hedging cost of a European claim. 

To do so, we first provide a dynamic programming principle in a very general setting in discrete time, see Theorem \ref{DPP}. Notice that we do not need any no-arbitrage condition to formulate it. Secondly, we propose some conditions under which it is possible to implement the dynamic programming principle. Actually, we shall see that we only need to have an insight on the conditional supports of the increments of the process describing the financial market, mainly the price and volume processes. 

Our main results are formulated under some weak no-arbitrage conditions such that the minimal super-hedging costs are non negative for non negative payoffs, as in \cite{CL}, \cite{BCL}, \cite{EL1}. These conditions avoid the unrealistic case of infinitely negative prices. The main problem is how to compute an essential supremum and an essential infimum. We show that they may coincide with pointwise supremum and infimum respectively. This is sufficient  to compute backwardly the hedging costs as solutions to pointwise (random) optimization problems. 

The main difficulty is to find some conditions, a priori the weakest ones are the best, so that the dynamic programming principle reads as the simpler problem of computing pointwise supremum and infimum as announced above.  In this paper, we naturally suppose that the payoff we consider is (super) hedgeable as the contrary assumption does not make sense in the problem of estimating a price. Moreover, for computational purposes, we suppose that the prices can not be infinite. In practice, this is clearly observed and this leads to the condition AIP, which is weaker than the usual no-arbitrage conditions. 

We do not necessarily suppose that the transactions costs are convex but, when this is the case, we show that convexity is preserved backwardly, i.e. the infimum price of the terminal claim is a convex function of the current price at any time. In general, even with non convex transaction costs, we obtain computable prices under a condition imposed on the conditional supports of the underlying price process. Recall that any closed random set admits a Castaing representation, i.e. it admits a.s. a countable dense subset composed of countable measurable selections of itself. In our paper, we suppose that this Castaing representation is a function of the current price at any time. This can be interpreted as a Markov property. A priori, we may extend our results under the condition that the Castaing representation depends on the  path of the underlying price. This  is more technical but appropriated to the case of Asian options. We may also generalize our results to  American options as it is done for frictionless models with Snell envelops.

The paper is organized as follows. The financial market is defined by a cost process, which is not necessarily convex, as described in Section \ref{Model}. Then, the dynamic programming principle is established in Section \ref{DPPP}, see Theorem \ref{DPP}. The last Section \ref{CF} is devoted to the implementation of the  dynamic programming principle. Precisely, we formulate results that ensure the propagation of the lower semicontinuity property to the minimal hedging cost  at any time, e.g. with respect to the spot price, see Theorem \ref{Meas+lsc+conv}, Corollary \ref{CoroPropag1}, Theorem \ref{PropagTheo}, Theorem \ref{LSC+} and Theorem \ref{PropagTheo1}. In Subsection \ref{FC}, fixed cost models are considered.   Theorem \ref{PropagTheoHorizon} also states the propagation of the lower semicontinuity property that allows to numerically compute the minimal hedging cost backwardly. It is formulated under a no-arbitrage condition on the enlarged market only composed of linear transaction costs in the spirit of \cite{LT} but also \cite{Pennanen} in the context of utility maximization.

\section{Financial market model defined by a cost process}\label{Model}
We consider a stochastic basis in discrete-time $(\Omega, (\cF_t)_{t=0}^T,\bP)$ where the filtration $(\cF_t)_{t=0}^T$ is  complete, i.e.  $\cF_0$ contains the negligible sets for $\bP$. By convention, we also define $\cF_{-1}:=\cF_0$. If $A$ is a random subset of $\R^d$, $d\ge 1$, we denote by $L^0(A,\R^d)$ the family of (equivalence classes of) all random variables $X$ (defined up to a negligible set) such that $X(\omega)\in A(\omega)$, $\bP$ a.s.$(\omega)$. It is well known that, if $A(\omega)\ne \emptyset$ $P$ a.s.$(\omega)$ and if $A$ is graph-measurable, see \cite{Mol}, then $L^0(A,\R^d)\ne \emptyset$. When using this property, we refer it  saying  {\sl by measurable selection arguments}, as it is usual to do when claiming the existence of $X\in L^0(\R^d,\cF)$ such that $X\in A$ a.s..

We also adopt the following notations. We denote by ${\rm int }A$ the interior of any $A\subseteq \R^d$ and ${\rm cl}A$ is its closure. The positive dual of $A$ is defined as $A^*:=\{x\in \R^d:~ax\ge 0,\,\forall a\in A\}$ where $ax$ designates the Euclidean scalar product of $\R^d$. At last, if $r\ge 0$, we denote by 
$\bar B(0,r)\subseteq \R^d$ the closed ball of all $x\in \R^d$ such that the norm  satisfies  $|x|\le r$.\smallskip

We consider a financial market where transaction costs are charged when the agents buy or sell risky assets. The typical case is a model defined by a bond whose discounted price is $S^1 =1$ and $d-1$ risky assets that may be traded at some bid and ask discounted prices $S^b$ and $S^a$, respectively, when selling or buying.  We refer the readers to the huge literature on models with transactions costs, in particular see \cite{KS}.\smallskip

Our general model is defined by a set-valued process $(\G_t)_{t=0}^T$ adapted to the filtration  $(\cF_t)_{t=0}^T$. Precisely, we suppose that for all $t\le T$, $\G_t$ is $\cF_t$-measurable in the sense of the graph 
${\rm Graph}(\G_t)=\{(\omega,x):~x\in \G_t(\omega)\}$ that belongs to $\cF_t\otimes \cB(\R^d)$,
where $\cB(\R^d)$ is the Borel $\sigma$-algebra on $\R^d$ and $d\ge 1$ is the number of  assets. 

We suppose that $\G_t(\omega)$ is closed for  every $\omega\in \Omega$ and $\G_t(\omega)+\R^d_+\subseteq \G_t(\omega)$, for all $t\le T$. The cost value process $\C=(\C_t)_{t=0}^T$ associated to $\G$ is defined as:
$$\C_t(z)=\inf\{\alpha \in \R:~ \alpha e_1-z\in \G_t\}=\min\{\alpha \in \R:~ \alpha e_1-z\in \G_t\},\quad z\in \R^d.$$
We suppose that the right hand side in the definition above is non empty a.s. and $-e_1$ does not belong to $\G_t$ a.s. where $e_1=(1,0,\cdots,0)\in \R^d$.  Moreover, by assumption, $\C_t(z)e_1-z\in \G_t$ a.s. for all $z\in \R^d$. Note that $\C_t(z)$ is the minimal amount of cash one needs to buy the financial position $z\in \R^d$ at time $t$. In particular, we suppose that $\C_t(0)=0$. \smallskip

Similarly, we may define the liquidation value process $\L=(\L_t)_{t=0}^T$ associated to $\G$ as:
\bean
    \L_t(z) := \sup\left\{\alpha\in\R:z-\alpha e_1\in \G_t\right\},\quad z\in \R^d.
\eean
We observe that $\L_t(z)=-\C_t(-z)$ and $\G_t=\{z\in \R^d:~\L_t(z)\ge 0\}$ so that our model is equivalently defined by $\L$ or $\C$. Note that $\G_t$ is closed if and only if $\L_t(z)$ is upper semicontinuous (u.s.c.) in $z$, see \cite{LT}, or equivalently $\C_t(z)$ is lower semicontinuous (l.s.c.) in $z$. Naturally,  $\C_t(z)=\C_t(S_t,z)$ depends on the available quantities and prices for the risky assets, described by an exogenous vector-valued $\cF_t$-measurable random variable $S_t$ of $\R^m_+$, $m\ge d$, and also depends on the quantities $z\in \R^d$ to be traded. Here, we suppose that $m\ge d$ as an asset may be described by 
several prices and quantities offered by the market, e.g. bid and ask prices, or several pair of bid and ask prices of an order book and the associated quantities offered by the market. \smallskip

In the following, we suppose the following assumptions on the cost process $\C$. For any $t\le T$, the cost function $\C_t$ is a lower semicontinuous Borel function defined on $\R^m\times \R^d$ such that
\begin{align*}
    &\C_t(s,0) = 0,\,\forall s\in \R^m_+,\\
    &\C_t(s,x+\lambda e_1) = \C_t(s,x) + \lambda,\,\lambda\in\R,\, x\in\R^d,\, s\in \R^m_+\,\ ( {\rm cash\, invariance}),\\ 
    & \C_T(s,x_2)\ge \C_T(s,x_1),\, \forall x_1,x_2\, {\rm\, s.t. }\, x_2-x_1\in \R^d_+ \ ( \C_T\, {\rm is \, increasing\, w.r.t.}\, \R^d_+),\\
    &|\C_t(s,x)| \le h_t(s,x),
\end{align*}
where $h_t$ is a deterministic continuous function.  Note that  $\C_T$  is increasing w.r.t. $\R^d_+$ is equivalent to $\G_T+\R^d_+\subseteq \G_T$. Moreover, if $\delta$ is an increasing bijection from $[0,+\infty]$ to $[0,+\infty]$ such that $\delta(0)=0$ and $\delta(\infty)=\infty$, we say that $\C_t$ is positively super $\delta$-homogeneous if the following property holds:

\bean \C_t(s,\lambda x) \geq \delta(\lambda) C_t(s,x),\forall \lambda\geq 1,\, s\in \R^m_+,\, x\in\R^d.\eean

A classical case is when $\delta(x)=x$ and the positive homogeneous property holds, e.g. for models with proportional transaction costs, as the solvency set process $\G$ is a positive cone, see \cite{KS}. More generally, if $\C_t(s,x)$ is convex in $x$ and $\C_t(s,0) = 0$, it is clear that $\C_t$ is positively super $\delta$-homogeneous with $\delta(x)=x$. Actually, in our definition, the domain of validity $\lambda\ge 1$ may be replaced by $\lambda\ge r$ where $r>0$ is arbitrarily chosen. In that case, all the results we formulate in this paper are still valid.  We now present a typical model that satisfies our assumptions:

\begin{ex}[Order book]\label{OB}{\rm Suppose that the financial market is defined by an order book. In that case, we define $S_t$, at any time $t$,  as
$$S_t=((S^{b,i,j}_t,S^{a,i,j}_t),(N^{b,i,j}_t,N^{a,i,j}_t))_{i=1,\cdots,d,j=1,\cdots,k},$$
where $k$ is the order book's depth and, for each $i=1,\cdots,d$, $S^{b,i,j}_t,S^{a,i,j}_t$ are the bid and ask prices for asset $i$ in the $j$-th line of the order book and $(N^{b,i,j}_t,N^{a,i,j}_t)\in (0,\infty)^2$ are the available quantities for these  bid and ask prices. We suppose that $N^{b,i,k}_t=N^{a,i,k}_t=+\infty$ so that the market is completely liquid.  By definition of the order book, we have $S^{b,i,1}_t>S^{b,i,2}_t>\cdots>S^{b,i,k}_t$  and $S^{a,i,1}_t<S^{a,i,2}_t<\cdots<S^{a,i,k}_t$.  We then define the cost function as 
$$\C_t(x)=x^1+\sum_{i=2}^d\C_t^i(x^i),\quad x=(x^1,\cdots,x^d)\in \R^d.$$ With the convention $\sum_{r=1}^j=0$ if $j=0$, we consider the cumulated quantities  $Q_t^{a,i,j}:=\sum_{r=1}^jN^{a,i,r}_t$, $j=0,\cdots,k,$ the same for  $Q_t^{b,i,j}$. We have:
\bean \C_t^i(y)&=&\sum_{r=1}^jN^{a,i,r}_tS^{a,i,r}_t+(y-Q_t^{a,i,j})S^{a,i,j+1}_t,\quad {\rm if\,} Q_t^{a,i,j}<y\le Q_t^{a,i,j+1},\\
\C_t^i(y)&=&-\sum_{r=1}^jN^{b,i,r}_tS^{b,i,r}_t+(y+Q_t^{b,i,j})S^{b,i,j+1}_t,\quad {\rm if\,} -Q_t^{b,i,j+1}<y\le -Q_t^{b,i,j}.
\eean
Note that the first expression of $\C_t^i(z)$ above corresponds to the case where we buy $y>0$ units of asset $i$. The second expression  is $\C_t^i(y)=-\L_t^i(-y)$ when $y<0$ so that $-\C^i_t(y)$ is the liquidation value of the position $-y$, i.e. by selling the quantity $-y>0$ at the bid prices. We observe that $\C_t^i(y)$ is a convex function in $y$ satisfying the cash invariance, such that $\C^i_t(0)=0$ and, at last, we show that $\C_t^i$ is positively super homogeneous as defined above. 

To do so, we first consider $y>0$ and we show that $\C^i_t(\lambda y)\ge \lambda \C^i_t(y)$ for $\lambda>1$ by induction on the interval $]Q_t^{a,i,j},Q_t^{a,i,j+1}]$ that contains $y$.   For $j=1$, $\C^i_t(y)=S^{a,i,1}_ty$ and $\C_t^i(\lambda y)= \C_t^i(Q_t^{a,i,j_{\lambda}})+(\lambda y-  Q_t^{a,i,j_{\lambda}})S^{a,i,j_{\lambda}+1}_t$ where $j_{\lambda}$ is such that $\lambda y\in ]Q_t^{a,i,j_{\lambda}},Q_t^{a,i,j_{\lambda}+1}]$. As $S^{a,i,1}_t$ is the smallest ask price, we get that $\C^i_t(Q_t^{a,i,j_{\lambda}})\ge Q_t^{a,i,j_{\lambda}}S^{a,i,1}_t$ and $(y-  Q_t^{a,i,j_{\lambda}})S^{a,i,j_{\lambda}+1}_t\ge (\lambda y-  Q_t^{a,i,j_{\lambda}})S^{a,i,1}_t$. We deduce that $\C^i_t(\lambda y)\ge \lambda yS^{a,i,1}_t$ hence $\C^i_t(\lambda y)\ge \lambda \C^i_t(y)$. More generally, if $y\in ]Q_t^{a,i,j},Q_t^{a,i,j+1}]$, $\lambda y> \lambda Q_t^{a,i,j}$ hence $\C_t(\lambda y)\ge \C_t(\lambda Q_t^{a,i,j})+(\lambda y-\lambda Q_t^{a,i,j})S_t^{a,i,\tilde j}$ where $\tilde j$ is such that $ Q_t^{a,i,\tilde j}<\lambda Q_t^{a,i,j}\le Q_t^{a,i,\tilde j+1}$. Indeed, the extra  quantity $\lambda y-\lambda Q_t^{a,i,j}$ is bought at a price larger than or equal to the maximal ask price $S_t^{a,i,\tilde j}$  when buying the quantity $\lambda Q_t^{a,i,j}$.     As $\lambda Q_t^{a,i,j}>Q_t^{a,i,j}$, we deduce that $\tilde j\ge j+1$. Using the induction hypothesis, we have $\C^i_t(\lambda Q_t^{a,i,j})\ge \lambda \C^i_t( Q_t^{a,i,j})$ and we deduce that 
\bean
\C^i_t(\lambda y)\ge \lambda C_t^i( Q_t^{a,i,j})+(\lambda y-\lambda Q_t^{a,i,j})S_t^{a,i,j+1}=\lambda \C^i_t(y).
\eean
By the same reasoning, $\L_t^i(\lambda y)\le \lambda \L_t^i(y)$ if $y>0$ with  $\L_t^i(y)=-\C_t^i(-y)$. Therefore, we also get that $\C_t^i(\lambda y)\ge \lambda \C^i_t(y)$ for $\lambda>1$ and $y<0$. \smallskip

We finally conclude that the cost process $\C$ satisfies the conditions we impose above. In particular, notice that $\C_t(s,z)$ is continuous in $(s,z)$. $\triangle$

}
\end{ex}

A portfolio process is by definition a stochastic process $(V_t)_{t=-1}^T$ where $V_{-1}\in \R e_1$ is the initial endowment expressed in cash that we may convert immediately into $V_0\in \R^d$ at time $t=0$. By definition, we suppose that 
$$\Delta V_t=V_t-V_{t-1}\in -\G_t,\, a.s.,\quad t=0,\cdots,T.$$

This means that any position $V_{t-1}=V_t +(-\Delta V_t)$ may be changed into the new position $V_t$, letting aside the residual part $(-\Delta V_t)$ that can be liquidated without any debt, i.e. $\L_t(-\Delta V_t)\ge 0$. Notice that, super-hedging or hedging a terminal claim is mainly equivalent in our setting as it is allowed to throw money, i.e. we may have $\L_t(-\Delta V_t)> 0$. \smallskip

\section{Dynamic programming principle for pricing}\label{DPPP}

Let $\xi\in L^0(\R^d,\mathcal{F}_T)$ be a contingent claim. Our goal is to characterize the set of all portfolio processes $(V_t)_{t=-1}^T$ such that $V_T= \xi$, as defined in the last section. We are mainly interested by the infimum cost one needs to hedge $\xi$, i.e. the infimum value of the initial capitals $V_{-1}e_1\in \R$ among the portfolios $(V_t)_{t=-1}^T$ replicating $\xi$. \smallskip

In the following, we use the notation $z = (z^1,z^2,...,z^d)\in \R^d$ and we denote  $z^{(2)} = (z^2,...,z^d)$. We shall heavily use the notion of $\mathcal{F}_{t}$-measurable conditional essential supremum (resp. infimum) of a family of random variables, i.e.  the smallest (resp. largest) $\mathcal{F}_{t}$-measurable random variable that dominates (resp. is dominated by) the family with respect to the natural order between $[-\infty,\infty]$-valued random variables, i.e. $X\le Y$ if $\bP(X\le Y)=1$, see \cite[Section 5.3.1]{KS}. 

\subsection{The one step hedging problem}

 Recall that $V_{T-1}\ge_{\G_T}V_T$ by  definition of a portfolio process. Then, the  hedging problem $V_T=\xi$ \footnote{The problem $V_T\ge_{\G_T} \xi$  is equivalent to our one  if  $\G_T+\G_T\subseteq \G_T$. In general, any $V_T$ such that $V_T\ge_{\G_T} \xi$ may be changed into $\xi$ through an additional cost. So, the formulation $V_T=\xi$ is chosen as we are interested in minimal costs. } is  equivalent at time $T-1$ to:
\bean
    \L_T(V_{T-1}) \geq \xi &&\Longleftrightarrow V^1_{T-1} \geq \xi^1 - \L_T((0,V^{(2)}_{T-1})),\\
    &&\Longleftrightarrow V^1_{T-1} \geq \esssup_{\mathcal{F}_{T-1}}\left(\xi^1 - \L_T((0,V^{(2)}_{T-1} - \xi^{(2)}))\right),\\
    &&\Longleftrightarrow V^1_{T-1} \geq \esssup_{\mathcal{F}_{T-1}}\left(\xi^1 + \C_T((0,\xi^{(2)}- V^{(2)}_{T-1}))\right),\\
    &&\Longleftrightarrow V^1_{T-1} \geq F_{T-1}^{\xi}(V^{(2)}_{T-1}),
\eean
\smallskip
where 
\bea\label{Ftxi}
    F_{T-1}^{\xi}(y) &:=& \esssup_{\mathcal{F}_{T-1}}\left(\xi^1 + \C_T((0,\xi^{(2)}- y))\right).
\eea

By virtue of Proposition \ref{JointlyMeasVersion} in Appendix,  we may suppose that  $F_{T-1}^{\xi}(\omega,y)$ is jointly $\cF_{T-1}\times \cB(\R^{d-1})$-measurable,  l.s.c. as a function of $y$ and convex if $\C_T(s,y)$ is convex in $y$. As $\mathcal{F}_{T-1}$ is supposed to be complete, we conclude that  $F_{T-1}^{\xi}$  is an $\mathcal{F}_{T-1}$ normal integrand, see Definition \ref{NI}  and \cite{RW}. \smallskip

\subsection{The multi-step hedging problem}

 We denote by $\cP_t(\xi)$ the set of all portfolio processes starting at time $t\le T$  that replicates $\xi$ at the terminal date $T$:
\begin{align*}
    \cR_t(\xi):=\left\{(V_s)_{s=t}^T,-\Delta V_{s}\in L^0(\G_{s},\cF_s),\, \forall s\geq t+1, V_T=\xi\right\}.
\end{align*}
The set of replicating prices of $\xi$ at time $t$ is
\begin{align*}
    \cP_{t}(\xi):= \left\{V_t = (V_t^1,V_t^{(2)}): (V_s)_{s=t}^T\in\cR_{t}(\xi)\right\}.
\end{align*}
The infimum replicating cost is then defined as:
\begin{align*}
    c_{t}(\xi): = \essinf_{\cF_t}\left\{\C_t(V_t),\, V_t\in\cP_{t}(\xi)\right\}.
\end{align*}
By the previous section, we know that  $V_{T-1}\in \cP_{T-1}(\xi)$ if and only if
\begin{align*}
    V_{T-1}^1\geq\esssup_{\cF_{T-1}}\left(\xi^1 + \C_T(0,\xi^{(2)}-V_{T-1}^{(2)})\right) \text{ a.s..}
\end{align*}
Similarly, $V_{T-2}\in \cR_{T-2}(\xi)$ if and only if there exists $V_{T-1}^{(2)}\in L^0(\R^{d-1},\cF_{T-1})$ such that
\begin{align*}
    V_{T-2}^1 \geq\esssup_{\cF_{T-2}}\left(\esssup_{\cF_{T-1}}\left(\xi^1 + \C_T(0,\xi^{(2)}-V_{T-1}^{(2)})\right) + \C_{T-1}(0,V_{T-1}^{(2)} - V_{T-2}^{(2)})\right).
\end{align*}
As the conditional essential supremum operator satisfies the tower property, we deduce  that $V_{T-2}\in \cR_{T-2}(\xi)$ if and only if there is $V_{T-1}^{(2)}\in L^0(\R^{d-1},\cF_{T-1})$ such that
\begin{align*}
    V_{T-2}^1 \geq\esssup_{\cF_{T-2}}\left(\xi^1 + \C_T(0,\xi^{(2)}-V_{T-1}^{(2)}) + \C_{T-1}(0,V_{T-1}^{(2)} - V_{T-2}^{(2)})\right).
\end{align*}
Recursively, we get that $V_t\in \cP_{t}(\xi)$ if and only if, for some $V_{s}^{(2)}\in L^0(\R^{d-1},\cF_{s})$, $s=t+1,\cdots,T-1$, and $V_T^{(2)}=\xi^{(2)}$, we have
\begin{align*}
    V_t^1\geq \esssup_{\cF_{t}}\left(\xi^1 +\sum_{s=t+1}^{T}\C_{s}(0,V_{s}^{(2)} - V_{s-1}^{(2)})\right).
\end{align*}
In the following, for $u\le T-1$, $\xi_{u-1}\in L^0(\R^d,\cF_{u-1})$, and $\xi\in L^0(\R^d,\cF_T)$, we introduce the sets  $$\Pi_{u}^T(\xi_{u-1},\xi):=\{\xi_{u-1}^{(2)}\}\times \Pi_{s=u}^{T-1}L^0(\R^{d-1},\cF_{s})\times \{\xi^{(2)}\}$$  of all families $(V^{(2)}_s)_{s=u-1}^{t+1}$ such  that $V^{(2)}_{u-1}=\xi_{u-1}^{(2)}$, $V^{(2)}_s\in L^0(\R^{d-1},\cF_{s}) $ for all $s=u,\cdots,T-1$ and $V^{(2)}_{T}=\xi^{(2)}$. We set $\Pi_u^T(\xi):=\Pi_u^T(0,\xi)=\Pi_u^T(\xi_{u-1},\xi)$ when $\xi_{u-1}^{(2)}=0$. When $u=T$, we set   $\Pi_T^T(\xi_{T-1},\xi):=\{\xi_{T-1}^{(2)}\}\times  \{\xi^{(2)}\}$. Therefore, the infimum replicating cost at time $0$ is given by
\bean
c_{0}(\xi) = \mathop{\essinf_{\cF_{0}}}\limits_{V^2\in\Pi_0^T(\xi)}  \mathop{\esssup_{\cF_{0}}}\bigg(\xi^1 +\sum_{s=0}^{T}\C_{s}(0,V_{s}^2 - V_{s-1}^2)\bigg).
\eean

For $0\le t\le T$ and $V_{t-1}\in L^0(\R^d,\cF_{t})$, we define $\gamma_{t}^{\xi}(V_{t-1})$ as:
\begin{align*}
    \gamma_{t}^{\xi} (V_{t-1}):= \mathop{\essinf_{\cF_{t}}}\limits_{V^{(2)}\in \Pi_t^T(V_{t-1},\xi)} \esssup_{\cF_{t}}\bigg(\xi^1 
    +\sum_{s=t}^{T}\C_{s}(0,V_{s}^{(2)} - V_{s-1}^{(2)})\bigg).
\end{align*}

Note that $\gamma_{t}^{\xi} (V_{t-1})$ is the infimum cost to replicate the payoff $\xi$ when starting from the initial risky position $(0,V_{t-1}^{(2)})$ at time $t$. Observe that $\gamma_{t}^{\xi} (V_{t-1})$ does not depend on the first component $V_{t-1}^1$. Moreover, $$\gamma^{\xi} _{T}(V_{T-1}) = \xi^1+\C_T(0,\xi^{(2)}-V_{T-1}^{(2)}).$$ 
As $\G_T+\R^d_+\subseteq \G_T$, we also observe that $\gamma^{\xi} _{T}(V_{T-1})\ge \gamma^{0} _{T}(V_{T-1})$. At last, observe that $c_{0}(\xi)=\gamma_0^{\xi} (0)$. Therefore, the main goal of our paper is to study the random functions $(\gamma^{\xi} _{t})_{t=0,1,\cdots, T}$ and to propose conditions under which it is possible to compute them backwardly so that we may estimate $c_{0}(\xi)$. The main contribution of this section is the following:

\begin{theo}[Dynamic Programming Principle]\label{DPP}
For any $0\leq t\leq T-1$ and $V_{t-1}\in L^0(\R^d,\cF_{t-1})$, we have
\begin{align} \label{DPP1}
    \gamma_{t}^{\xi} (V_{t-1}) = \mathop{\essinf_{\cF_{t}}}\limits_{V_{t}\in L^0(\R^{d},\cF_{t})}\esssup_{\cF_{t}}\left(\C_{t}(0,V_{t}^{(2)}- V_{t-1}^{(2)}) + \gamma_{t+1}^{\xi} (V_{t})\right).
\end{align}\end{theo}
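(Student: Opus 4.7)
The plan is to establish the two inequalities $\gamma_t^\xi(V_{t-1})\ge A_t(V_{t-1})$ and $\gamma_t^\xi(V_{t-1})\le A_t(V_{t-1})$ separately, where $A_t(V_{t-1})$ denotes the right-hand side of (\ref{DPP1}). Two ingredients drive the argument: the tower property of conditional essential suprema, and a countable $\cF_{t+1}$-measurable concatenation (pasting) within the set $\Pi_{t+1}^T(V_t,\xi)$.

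For the inequality $\gamma_t^\xi\ge A_t$, fix any $V^{(2)}\in\Pi_t^T(V_{t-1},\xi)$ and set $V_t:=(0,V_t^{(2)})$. Since $\C_t(0,V_t^{(2)}-V_{t-1}^{(2)})$ is $\cF_t$-measurable, it factors out of $\esssup_{\cF_t}$, and the tower property rewrites $\esssup_{\cF_t}(\xi^1+\sum_{s=t+1}^T\C_s)$ as $\esssup_{\cF_t}\esssup_{\cF_{t+1}}(\xi^1+\sum_{s=t+1}^T\C_s)$. Because the tail $(V_s^{(2)})_{s=t}^T$ lies in $\Pi_{t+1}^T(V_t,\xi)$, the inner $\esssup_{\cF_{t+1}}$ dominates $\gamma_{t+1}^\xi(V_t)$ by definition; taking the $\cF_t$-essinf over $V^{(2)}$ on the left, and noting that $(0,V_t^{(2)})$ is an admissible candidate on the right, yields the desired inequality.

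The reverse inequality is the main obstacle. For arbitrary $V_t\in L^0(\R^d,\cF_t)$ and $\epsilon>0$, I would construct $W^{(2),\epsilon}\in\Pi_{t+1}^T(V_t,\xi)$ satisfying the \emph{uniform} approximation
\[
Y^{\epsilon}:=\esssup_{\cF_{t+1}}\Big(\xi^1+\sum_{s=t+1}^T\C_s(0,W^{(2),\epsilon}_s-W^{(2),\epsilon}_{s-1})\Big)\le \gamma_{t+1}^\xi(V_t)+\epsilon\quad\text{a.s.}
\]
A pointwise decreasing approximation $Y_n\downarrow\gamma_{t+1}^\xi(V_t)$ alone is not enough, because $\esssup_{\cF_t}$ need not commute with monotone decreasing limits. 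To obtain uniformity, I first check that the family $\cY:=\{Y_W:W\in\Pi_{t+1}^T(V_t,\xi)\}$ is directed downward: for $W^1,W^2\in\Pi_{t+1}^T(V_t,\xi)$ and $B:=\{Y_{W^1}\le Y_{W^2}\}\in\cF_{t+1}$, the concatenation $W_s:=\mathbf{1}_B W^1_s+\mathbf{1}_{B^c}W^2_s$ remains in $\Pi_{t+1}^T(V_t,\xi)$ (as $B\in\cF_{t+1}\subseteq\cF_s$ for $s\ge t+1$) and one checks $Y_W=Y_{W^1}\wedge Y_{W^2}$. Extracting a decreasing sequence $Y_n=Y_{W^n}\downarrow\gamma_{t+1}^\xi(V_t)$ a.s., I then form the greedy $\cF_{t+1}$-partition $A_n:=\{Y_n\le\gamma_{t+1}^\xi(V_t)+\epsilon\}\setminus\bigcup_{k<n}A_k$, which covers $\Omega$ a.s., and paste $W^{(2),\epsilon}_s:=\sum_n\mathbf{1}_{A_n}W^{(2),n}_s$ for $s\ge t+1$ (with $W^{(2),\epsilon}_t=V_t^{(2)}$). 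A direct calculation exploiting disjointness of the $A_n$ delivers $Y^{\epsilon}=\sum_n\mathbf{1}_{A_n}Y_n\le\gamma_{t+1}^\xi(V_t)+\epsilon$.

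To conclude, define $V^{(2),\epsilon}\in\Pi_t^T(V_{t-1},\xi)$ by $V^{(2),\epsilon}_{t-1}=V_{t-1}^{(2)}$ and $V^{(2),\epsilon}_s=W^{(2),\epsilon}_s$ for $s\ge t$. Replaying the easy-direction computation yields
\[
\esssup_{\cF_t}\Big(\xi^1+\sum_{s=t}^T\C_s(0,V^{(2),\epsilon}_s-V^{(2),\epsilon}_{s-1})\Big)=\C_t(0,V_t^{(2)}-V_{t-1}^{(2)})+\esssup_{\cF_t}Y^{\epsilon}\le \esssup_{\cF_t}\big(\C_t(0,V_t^{(2)}-V_{t-1}^{(2)})+\gamma_{t+1}^\xi(V_t)\big)+\epsilon,
\]
so $\gamma_t^\xi(V_{t-1})\le \esssup_{\cF_t}(\C_t(0,V_t^{(2)}-V_{t-1}^{(2)})+\gamma_{t+1}^\xi(V_t))+\epsilon$ for every $V_t\in L^0(\R^d,\cF_t)$ and every $\epsilon>0$; taking essinf over $V_t$ and letting $\epsilon\downarrow 0$ completes the proof.
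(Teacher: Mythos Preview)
Your proof is correct and uses the same toolbox as the paper (tower property, directed-downward families, countable $\sigma$-algebra-adapted pasting), but the organization is genuinely different. The paper argues by backward induction on $t$: assuming the identity already holds at all times $r\ge t+1$, it first extracts an $\epsilon$-optimal $\tilde V_t$ for the $\cF_t$-essinf on the right-hand side (using that $\{f_t(V_{t-1},V_t):V_t\}$ is directed downward), and then iterates forward, invoking $\gamma_r^\xi=\bar\gamma_r^\xi$ at each step $r=t+1,\dots,T-1$ to produce $\tilde V_r$ with $\gamma_r^\xi(\tilde V_{r-1})+\epsilon\ge f_r(\tilde V_{r-1},\tilde V_r)$; the errors accumulate to $\epsilon T$. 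You instead work directly from the definition of $\gamma_{t+1}^\xi(V_t)$: for an \emph{arbitrary} $V_t$ you apply the directed-downward and pasting argument once, at the $\cF_{t+1}$ level, to the whole tail family $\Pi_{t+1}^T(V_t,\xi)$, obtain a single $\epsilon$-optimal tail in one shot, and only then take the essinf over $V_t$. Your route avoids induction entirely and yields error $\epsilon$ rather than $\epsilon T$; the paper's route has the merit of making the one-step recursive structure explicit. Both arguments tacitly assume $\gamma_{t+1}^\xi(V_t)>-\infty$ when invoking the uniform $\epsilon$-approximation, so your level of rigor matches the original.
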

\begin{proof}
We denote the right hand side of (\ref{DPP1}) by $\bar{\gamma}^{\xi} _{t}(V_{t-1})$. We first verify  (\ref{DPP1})  for $t=T-1$. Recall  that $\gamma^{\xi} _{T}(V_{T-1}) = \xi^1+\C_T(0,\xi^{(2)}-V_{T-1}^{(2)})$ if $V_{T-1}$ belongs to $L^0(\R^{d},\cF_{T-1})$. It is clear that  (\ref{DPP1}) holds  for $t=T-1$ by definition of $\gamma_{T-1}^{\xi} (V_{T-1})$. By induction, let us show that (\ref{DPP1}) holds at time $t$   if this holds at time $t+1$.  Let us define
$$f_t(V_{t-1},V_{t}):=\esssup_{\cF_{t}}\left(\C_{t}(0,V_{t}^{(2)}- V_{t-1}^{(2)}) + \gamma_{t+1}^{\xi} (V_{t})\right), t\le T-1.$$
  We observe that the collection of random variables 
  $$\Gamma_{t}=\{f_t(V_{t-1},V_{t}):~V_{t}\in L^0(\R^{d},\cF_{t})\}$$ is directed downward, i.e. if $f_t^j=f_t(V_{t-1},V_{t}^j)\in \Gamma_{t}$, $j=1,2$, then there exists $f_t\in \Gamma_{t}$ such that $f_t\le f_t^1\wedge f_t^2$. Indeed, to see it, it suffices to consider $f_t=f_t(V_{t-1},V_{t})$ where $V_{t}=V_{t}^11_{\{f_t^1\le f_t^2\}}+V_{t}^21_{\{f_t^1>f_t^2\}}$. Therefore, there exists a sequence $(V_{t}^n)_{n\ge 1}\in L^0(\R^{d},\cF_{t})$ such that $\bar{\gamma}^{\xi} _{t}(V_{t-1})=\inf_n f_t(V_{t-1},V_{t}^n)$, see \cite[Section 5.3.1]{KS}. We deduce for any $\epsilon>0$, the existence of $\tilde V_{t}\in L^0(\R^d,\cF_t)$ such that
$\bar{\gamma}_{t}^{\xi} (V_{t-1})+\epsilon\ge f_t(V_{t-1}^{(2)},\tilde V_{t}^{(2)})$. Similarly, by forward iteration, using the induction hypothesis $\gamma_{r}^{\xi} (\tilde V_{r-1})=\bar{\gamma}_{r}^{\xi} (\tilde V_{r-1})$, $r\ge t+1$, we obtain  the existence of $\tilde V_{r}\in L^0(\R^d,\cF_{r})$ such that $\gamma_{r}^{\xi} (\tilde V_{r-1})+\epsilon\ge f_{r}(\tilde V_{r-1}^{(2)},\tilde V_{r}^{(2)})$, for all $r=t+1,\cdots, T-1$.
With $\tilde V_{t-1}=V_{t-1}$ and $\tilde V_{T}=\xi$, we deduce that

$$\bar{\gamma}^{\xi} _{t}(V_{t-1})+\epsilon T\ge \esssup_{F_t}\left ( \xi^1+\sum_{s=t}^{T}\C_{s}(0,\tilde V_{s}^{(2)} - \tilde V_{s-1}^{(2)})\right)\ge \gamma^{\xi} _t(V_{t-1}).$$

As $\epsilon$ goes to $0$, we conclude that $\bar{\gamma}_{t}^{\xi} (V_{t-1})\ge \gamma_t^{\xi} (V_{t-1})$ . The reverse inequality is easily obtained by induction and using the assumption that $\bar{\gamma}^{\xi} _{r}$ and $\gamma^{\xi} _t$ coincide if $r\ge t$ with the tower property. The conclusion follows. \end{proof}

\section{Computational feasibility of the dynamic programming principle}\label{CF}

The dynamic programming principle (\ref{DPP1}) allows to get $\gamma_{t}^{\xi} (V_{t-1})$ from the cost function $\C_t$ and from $\gamma_{t+1}^{\xi} $.  In this section, our first main contribution is to show that $\gamma_{t}^{\xi}$ is l.s.c. for any $t$ and convex if  the cost functions are. Then, we formulate some results  allowing to compute $\omega$-wise the essential supremum and the essential  infimum of (\ref{DPP1}). \smallskip

As the term $\C_{t}(0,V_{t}^{(2)}- V_{t-1}^{(2)})$ in (\ref{DPP1}) is $\cF_{t}$-measurable,   it is sufficient to consider the conditional supremum 

$$\theta_{t}^\xi(V_{t}):= \esssup_{\cF_{t}}\gamma_{t+1}^\xi(V_{t})$$
to compute the essential supremum of (\ref{DPP1}). In the following, we shall use the following notations:
\bea
D_t^\xi(V_{t-1},V_{t}):=\C_{t}((0,V_{t}^{(2)}- V_{t-1}^{(2)})) + \theta_{t}^\xi(V_{t}),\label{D1}\\
D_t^\xi(S_t,V_{t-1},V_{t}):=\C_{t}(S_t,(0,V_{t}^{(2)}- V_{t-1}^{(2)})) + \theta_{t}^\xi(S_t,V_{t})\label{D2}.
\eea
The second notation is used when we stress the dependence on $S_t$.

\subsection{Computational feasibility for convex costs}\label{CF-convex}

The following first result ensures the propagation of the lower semicontinuity and convexity of  the random function $\gamma_{t+1}^{\xi}$ to  $\gamma_{t}^{\xi}$ as we shall see in Theorem \ref{Meas+lsc+conv}. This is a crucial property to  pointwisely compute the essential infimum in (\ref{DPP1}).

\begin{prop}\label{Meas+lsc} Suppose that there exists a random $\cF_{t+1}$-measurable lower semicontinuous  function $\tilde \gamma_{t+1}^{\xi}$ defined on $\R^d$ such that   $\gamma_{t+1}^{\xi}(V_{t})=\tilde \gamma_{t+1}^{\xi}(V_t)$ for all $V_t\in L^0(\R^d,\cF_t)$. Then,  there exists  a random $\cF_{t}$-measurable lower semicontinuous function $ \tilde\theta_{t}^\xi$ defined on $\R^d$ such that  $\theta_{t}^\xi(V_{t})=\tilde\theta_{t}^\xi(V_t)$ for all  $V_t\in L^0(\R^d,\cF_t)$. Moreover, the random function $y\mapsto \tilde\theta_{t}^\xi(y)$ is a.s. convex if $y\mapsto \tilde \gamma_{t+1}^{\xi}(y)$ is a.s. convex.
\end{prop}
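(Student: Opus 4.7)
The plan is to construct $\tilde\theta_t^\xi$ at the level of epigraphs, which circumvents the uncountable-null-set obstruction one meets when defining $\esssup_{\cF_t}[\tilde\gamma_{t+1}^\xi(\cdot,y)]$ pointwise in $y$. The $\cF_{t+1}$-measurable normal integrand $\tilde\gamma_{t+1}^\xi$ has epigraph
$$E(\omega):=\epi\tilde\gamma_{t+1}^\xi(\omega,\cdot)=\bigl\{(y,r)\in\R^d\times\R:\ r\ge\tilde\gamma_{t+1}^\xi(\omega,y)\bigr\},$$
a closed-valued $\cF_{t+1}$-measurable random set. I propose as candidate
$$\tilde E(\omega):=\bigl\{(y,r)\in\R^d\times\R:\ \bP\bigl((y,r)\in E\,\big|\,\cF_t\bigr)(\omega)=1\bigr\},$$
and then set $\tilde\theta_t^\xi(\omega,y):=\inf\{r\in\R:(y,r)\in\tilde E(\omega)\}$.

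Four verifications are then required. (i) $\tilde E(\omega)$ is closed a.s.: along a convergent sequence $(y_n,r_n)\to(y,r)$ with $(y_n,r_n)\in\tilde E(\omega)$, the countable intersection of the conull events $\{(y_n,r_n)\in E\}$ under $\bP(\cdot|\cF_t)(\omega)$ remains conull, and on it closedness of $E(\omega')$ forces $(y,r)\in E(\omega')$. (ii) $\tilde\theta_t^\xi$ is jointly $\cF_t\otimes\cB(\R^d)$-measurable and l.s.c.\ in $y$: the upward stability $\tilde E+\{0\}\times\R_+\subset\tilde E$ is inherited from $E$, so $\tilde E=\epi\tilde\theta_t^\xi$ and l.s.c.\ is equivalent to closedness; joint measurability is obtained by building a Castaing representation of $\tilde E$ from the $\cF_t$-measurable data $\omega\mapsto\bP((y,r)\in E\,|\,\cF_t)(\omega)$ and invoking Proposition \ref{JointlyMeasVersion} as in the construction of $F^\xi_{T-1}$. (iii) Agreement $\tilde\theta_t^\xi(V_t)=\theta_t^\xi(V_t)$ for every $V_t\in L^0(\R^d,\cF_t)$: the characterization
$$\esssup_{\cF_t}[X](\omega)=\inf\bigl\{r\in\R:\ \bP(X\le r\,|\,\cF_t)(\omega)=1\bigr\}$$
applied to $X=\tilde\gamma_{t+1}^\xi(\cdot,V_t)$, combined with the fact that $V_t(\omega')=V_t(\omega)$ for $\bP(\cdot|\cF_t)(\omega)$-a.e.\ $\omega'$ by $\cF_t$-measurability of $V_t$, yields exactly $\tilde\theta_t^\xi(\omega,V_t(\omega))$. (iv) Convexity: if each $E(\omega)$ is convex, then for $(y_i,r_i)\in\tilde E(\omega)$, $i=1,2$, the intersection of the conull events $\{(y_i,r_i)\in E\}$ is still conull under $\bP(\cdot|\cF_t)(\omega)$, and on it convex combinations lie in $E$, so $\tilde E(\omega)$ is convex and $\tilde\theta_t^\xi(\omega,\cdot)$ is a convex function.

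The hard part will be steps (i)--(ii): establishing the a.s.\ closedness and the $\cF_t$-measurability of $\tilde E$ without at once producing an uncountable family of null-set exceptions. The resolution is to reduce all statements about $\tilde E$ to statements about countably many $\cF_t$-measurable events, either via a countable Castaing representation of $E$ or, equivalently when a regular conditional probability of $\bP$ given $\cF_t$ is available, via the deterministic reading of $\tilde\theta_t^\xi(\omega,y)$ as the essential supremum of $\tilde\gamma_{t+1}^\xi(\cdot,y)$ under $\bP(\cdot|\cF_t)(\omega)$, a supremum of l.s.c.\ functions in $y$. Once closedness and measurability are in hand, the epigraph-to-integrand translation for normal integrands and the propagation of convexity through conditional-a.s.\ intersections close the argument.
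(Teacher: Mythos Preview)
Your epigraph/conditional-core construction is conceptually sound and is in fact the engine already running inside Proposition~\ref{JointlyMeasVersion}: that proposition builds the random closed set $\Gamma_{t-1}$ from the closed $\cF_{t-1}$-decomposable family $\Lambda_{t-1}$ of $\cF_{t-1}$-measurable selections of the epigraph, via \cite[Corollary~2.5]{LM}, and then reads off $F^*_{t-1}$ as the infimum over the first coordinate. Your set $\tilde E$ is precisely the conditional core of $E$ with respect to $\cF_t$, and the decomposability route delivers it without any appeal to regular conditional probabilities.

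The paper's proof, however, does not redo this work. It simply observes that $\gamma_{t+1}^\xi(V_t)$ depends only on $V_t^{(2)}$, sets $f(z):=z^1+\tilde\gamma_{t+1}^\xi((0,z^{(2)}))$, checks that $f$ is l.s.c.\ and satisfies the cash-invariance $f(z)=z^1+f((0,z^{(2)}))$, and then applies Proposition~\ref{JointlyMeasVersion} with $\xi^1=0$ and the time index shifted by one. The output $F^*_t$ on $\R^{d-1}$ gives $\tilde\theta_t^\xi(v):=F^*_t(v^{(2)})$, and the convexity clause is already recorded in Proposition~\ref{JointlyMeasVersion}. The entire argument is a two-line reduction.

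Your route therefore buys nothing extra and costs more. In particular, step~(iii) as you state it leans on the claim that $V_t(\omega')=V_t(\omega)$ for $\bP(\cdot\,|\,\cF_t)(\omega)$-a.e.\ $\omega'$, i.e.\ that a regular conditional probability exists and concentrates on $\cF_t$-atoms; the paper makes no such standing assumption on $(\Omega,\cF_T,\bP)$ or on $\cF_t$. The Castaing/decomposability alternative you gesture at in your final paragraph does avoid this, but once you go that way you are re-proving Proposition~\ref{JointlyMeasVersion} rather than simply invoking it.
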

\begin{proof}
We consider the random function 
$$f(z)=z^1+\tilde \gamma_{t+1}^{\xi}((0,z^{(2)}))=z^1+f((0,z^{(2)})),\quad z\in \R^d.$$ We have $\gamma_{t+1}^{\xi}(V_{t})=f((0,V_t^{(2)}))$ so it suffices to apply Proposition  \ref{JointlyMeasVersion}.
\end{proof}

In order to numerically compute the minimal costs, we need to impose the finiteness of $\gamma_{t}^{\xi}(V_{t-1})$, i.e. $\gamma_{t}^{\xi}(V_{t-1})>-\infty$,  at any time $t$,  and for all $V_{t-1}\in L^0(\R^d,\cF_{t-1})$. This is why we introduce the following condition:
\begin{defi} We say that the financial market satisfies the Absence of Early Profit condition (AEP) if, at any time $t\le T$, and for all $V_{t}\in L^0(\R^d,\cF_{t})$, $\gamma_{t}^{0}(V_{t})>-\infty$ a.s..
\end{defi}

\begin{rem}\quad\smallskip{\rm

\noindent 1.) Let us comment the condition AEP. Suppose that AEP does not hold, i.e. there is $V_{t}\in L^0(\R^d,\cF_{t})$ such that   $\Lambda_t=\{\gamma_{t}^{0}(V_{t})=-\infty\}$ satisfies $P(\Lambda_t)>0$. Any arbitrarily chosen amount of cash $-n<0$ allows to hedge the zero payoff at time $t$ on $\Lambda_t$ when starting from the initial position $(0,V_{t}^{2})$ by definition of  $\gamma_{t}^{0}(V_{t})=-\infty$. Then, at time $t$, we may obtain an arbitrarily large profit on $\Lambda_t$ as follows: We write $0=\left((0,V_{t}^{2})-ne_1\right)1_{\Lambda_t} + a_{t-1}^n$ where  $a_{t-1}^n=\left(ne_1-(0,V_{t}^{2})\right)1_{\Lambda_t}$.  The position $(0,V_{t}^{2})-ne_1$ allows to get the zero claim at time $T$. Moreover,  $\L_t(a_{t-1}^n)=n1_{\Lambda_t}+\L_t((0,V_{t}^{2}))1_{\Lambda_t}$ tends to $+\infty$ as $n\to \infty$ on $\Lambda_t$, i.e. it is possible to make an early profit at time $t$, as large as possible. %\smallskip 

\noindent 2.) If $\xi\in L^0(\R^d_+,\cF_T)$, then  $\gamma^{\xi} _{t}(V_{t-1})\ge \gamma^{0} _{t}(V_{t-1})>-\infty$ under AEP.%\smallskip 

\noindent 3.) Under Assumptions \ref{g} and  \ref{CondSuppHemmiCont}  below, condition AEP holds by Lemma \ref{ContinuousLowerBoundGamma}.} $\triangle$

\end{rem}

\begin{Assum}\label{assumption payoff} The payoff $\xi$ is hedgeable, i.e. there exists a portfolio process $(V_u^{\xi})_{u=0}^T$ such that $\xi=V_T^{\xi}$. 
\end{Assum}

\begin{lemm} \label{LemmaGammaFini} Under Assumption \ref{assumption payoff}, $\gamma^{\xi} _{t}(V_{t-1})<\infty$ for all $V_{t-1}\in L^0(\R^d,\cF_{t})$.
\end{lemm}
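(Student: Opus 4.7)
The idea is to exhibit an explicit element of $\Pi_t^T(V_{t-1},\xi)$ for which the inner essential supremum is dominated by an $\cF_t$-measurable, a.s.\ finite random variable. The candidate uses the hedging portfolio $(V_u^{\xi})_{u=0}^T$ provided by Assumption \ref{assumption payoff}: I set $\tilde V_{t-1}^{(2)} := V_{t-1}^{(2)}$ and $\tilde V_s^{(2)} := V_s^{\xi,(2)}$ for $s = t,\dots,T$. Since $V_T^{\xi,(2)}=\xi^{(2)}$, this sequence lies in $\Pi_t^T(V_{t-1},\xi)$.

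\textbf{Telescoping step.} The key computation uses cash invariance together with the portfolio property. By definition of $(V_s^{\xi})$, we have $-\Delta V_s^{\xi}\in \G_s$, i.e.\ $V_{s-1}^{\xi}-V_s^{\xi}\in\G_s$ a.s., equivalently $\L_s(V_{s-1}^{\xi}-V_s^{\xi})\ge 0$, which gives $\C_s(V_s^{\xi}-V_{s-1}^{\xi})\le 0$. Combining with cash invariance,
\begin{align*}
\C_s\bigl(0,\,V_s^{\xi,(2)}-V_{s-1}^{\xi,(2)}\bigr)
=\C_s\bigl(V_s^{\xi}-V_{s-1}^{\xi}\bigr)-\bigl(V_s^{\xi,1}-V_{s-1}^{\xi,1}\bigr)
\le V_{s-1}^{\xi,1}-V_s^{\xi,1},\quad s\ge t+1.
\end{align*}
Summing over $s=t+1,\dots,T$ and using $V_T^{\xi,1}=\xi^1$, the right-hand side telescopes to $V_t^{\xi,1}-\xi^1$. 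Hence, for the candidate $\tilde V^{(2)}$,
\begin{align*}
\xi^1+\sum_{s=t}^{T}\C_s\bigl(0,\tilde V_s^{(2)}-\tilde V_{s-1}^{(2)}\bigr)
\le \C_t\bigl(0,V_t^{\xi,(2)}-V_{t-1}^{(2)}\bigr)+V_t^{\xi,1}.
\end{align*}

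\textbf{Conclusion.} The right-hand side is $\cF_t$-measurable and, by the growth bound $|\C_t(s,x)|\le h_t(s,x)$ with $h_t$ deterministic continuous, it is a.s.\ finite (since $S_t$, $V_t^{\xi,(2)}$, $V_{t-1}^{(2)}$ and $V_t^{\xi,1}$ are a.s.\ finite random variables). Because it dominates the inner expression, the $\cF_t$-conditional essential supremum inherits the same bound, and hence
\begin{align*}
\gamma_t^{\xi}(V_{t-1})\le \C_t\bigl(0,V_t^{\xi,(2)}-V_{t-1}^{(2)}\bigr)+V_t^{\xi,1}<\infty\quad\text{a.s.,}
\end{align*}
by the very definition of $\gamma_t^{\xi}$ as an essential infimum over $\Pi_t^T(V_{t-1},\xi)$.

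\textbf{Expected difficulty.} There is no real obstacle: the argument is a one-line construction plus a telescoping cancellation. The only subtle point is the use of cash invariance to transfer the portfolio inequality $\C_s(\Delta V_s^{\xi})\le 0$ into a controlled bound for $\C_s(0,\Delta V_s^{\xi,(2)})$, which requires separating the first coordinate from the others in order to fit the form appearing in the definition of $\gamma_t^{\xi}$.
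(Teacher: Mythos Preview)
Your proof is correct and follows essentially the same approach as the paper: both use the hedging portfolio $(V_u^{\xi})$ from Assumption~\ref{assumption payoff} to bound $\gamma_t^{\xi}(V_{t-1})$ by the $\cF_t$-measurable quantity $\C_t\bigl(V_t^{\xi}-(0,V_{t-1}^{(2)})\bigr)=V_t^{\xi,1}+\C_t\bigl(0,V_t^{\xi,(2)}-V_{t-1}^{(2)}\bigr)$, which is finite by the growth bound on $\C_t$. The only difference is presentational: the paper argues in one line via the financial interpretation of $\gamma_t^{\xi}$ as an infimum hedging cost, whereas you unpack the definition explicitly and recover the same bound through the telescoping of the first coordinates, which makes the argument more transparent.
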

\begin{proof} We observe that the amount of capital $\alpha_t=\C_t(V_t^{\xi}-(0,V_{t-1}^{(2)}))$ allows one to get the position $V_t^{\xi}-(0,V_{t-1}^{(2)})$. Therefore, starting from the initial position $(0,V_{t-1}^{(2)})$, the capital $\C_t(V_t^{\xi}-(0,V_{t-1}^{(2)}))$ is enough to get $V_t^{\xi}$ and then   $\xi$ at time $T$ since $V_T^{\xi}=\xi$. We then deduce that 
$$\gamma^{\xi} _{t}(V_{t-1})\le \alpha_t\le h_t(S_t,V_t^{\xi}-(0,V_{t-1}^{(2)}))<\infty.$$
\end{proof}
The following theorem states that the convexity and lower semicontinuity properties propagate backwardly from $\gamma_{t+1}^{\xi}$ to $\gamma_{t}^{\xi}$.

\begin{theo}\label{Meas+lsc+conv} Suppose that Assumption \ref{assumption payoff} and Condition AEP hold. Suppose that there exists a random $\cF_{t+1}$-normal convex integrand $\tilde \gamma_{t+1}^{\xi}$ defined on $\R^d$ such that   $\gamma_{t+1}^{\xi}(V_{t})=\tilde \gamma_{t+1}^{\xi}(V_t)$ for all $V_t\in L^0(\R^d,\cF_t)$. Suppose that the cost function $\C_t(s,z)$ is convex in $z$. Then,  there exists  a random $\cF_{t}$- normal convex integrand $ \tilde\gamma_{t}^{\xi}$ defined on $\R^d$ such that  $\gamma_{t}^{\xi}(V_{t-1})=\tilde\gamma_{t}^{\xi}(V_{t-1})$ for all  $V_{t-1}\in L^0(\R^d,\cF_t)$ and we have:
$$ \tilde\gamma_{t}^{\xi}(v_{t-1})=\inf_{y\in \R^{d}}\left(\C_t(0,y^{(2)}-v_{t-1}^{(2)})+\tilde\theta_{t}^\xi(y) \right),$$
where $\tilde\theta_{t}^\xi$ is given by Proposition \ref{Meas+lsc}. In particular, $\tilde\gamma_t^\xi(\omega,x)\in\R$, for all $x\in \R^d$, a.s., so that  $\tilde\gamma_t^\xi(\omega,\cdot)$ is a continuous function a.s..
\end{theo}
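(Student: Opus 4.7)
The plan is to construct the candidate function explicitly as the displayed pointwise infimum, and then verify, via four complementary ingredients, that it coincides with $\gamma_t^{\xi}$, is an $\cF_t$-normal convex integrand, and is in fact real-valued everywhere.

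First, I apply Proposition \ref{Meas+lsc} to the $\cF_{t+1}$-normal convex integrand $\tilde\gamma_{t+1}^{\xi}$; this produces $\tilde\theta_t^{\xi}$, an $\cF_t$-measurable lower semicontinuous function on $\R^d$ which is convex in $y$ almost surely. I then set
$$
\tilde\gamma_t^{\xi}(\omega,v) := \inf_{y\in\R^d}\bigl[\,\C_t(\omega,0,y^{(2)}-v^{(2)}) + \tilde\theta_t^{\xi}(\omega,y)\,\bigr].
$$
The integrand in brackets depends on $(v,y)$ only through the linear combination $y^{(2)}-v^{(2)}$, so by the assumed convexity of $\C_t(s,\cdot)$ together with that of $\tilde\theta_t^{\xi}(\omega,\cdot)$, it is jointly convex and lower semicontinuous in $(v,y)$, and $\cF_t\otimes\cB(\R^d\times\R^d)$-measurable. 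Its partial infimum $\tilde\gamma_t^{\xi}$ is therefore convex in $v$ and $\cF_t\otimes\cB(\R^d)$-measurable, by the standard Rockafellar--Wets result on projection of normal integrands.

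Second, I identify $\gamma_t^{\xi}(V_{t-1})$ with $\tilde\gamma_t^{\xi}(\cdot,V_{t-1})$ almost surely. The dynamic programming principle (Theorem \ref{DPP}) expresses $\gamma_t^{\xi}(V_{t-1})$ as the $\cF_t$-essential infimum over $V_t\in L^0(\R^d,\cF_t)$ of $\C_t(0,V_t^{(2)}-V_{t-1}^{(2)}) + \tilde\theta_t^{\xi}(V_t)$. The inequality $\gamma_t^{\xi}(V_{t-1}) \ge \tilde\gamma_t^{\xi}(\cdot,V_{t-1})$ is immediate by taking measurable selections. For the reverse, a measurable selection of $\epsilon$-minimizers from the above $\cF_t$-normal integrand provides, for each $\epsilon>0$, some $V_t^{\epsilon}\in L^0(\R^d,\cF_t)$ such that $\C_t(0,V_t^{\epsilon,(2)}-V_{t-1}^{(2)}) + \tilde\theta_t^{\xi}(V_t^{\epsilon}) \le \tilde\gamma_t^{\xi}(\cdot,V_{t-1}) + \epsilon$, so the essential infimum is bounded above by $\tilde\gamma_t^{\xi}(\cdot,V_{t-1})$.

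Third, I verify that $\tilde\gamma_t^{\xi}(\omega,\cdot)$ is real-valued on $\R^d$ almost surely. For the upper bound, Lemma \ref{LemmaGammaFini} gives $\tilde\gamma_t^{\xi}(\omega,v_k) < +\infty$ almost surely for each element $v_k$ of a fixed countable dense subset of $\R^d$; since every point of $\R^d$ lies in the convex hull of finitely many $v_k$'s and $\tilde\gamma_t^{\xi}(\omega,\cdot)$ is convex, finiteness propagates, hence $\tilde\gamma_t^{\xi}(\omega,\cdot) < +\infty$ everywhere on $\R^d$ almost surely. For the lower bound I argue by contradiction using AEP and Assumption \ref{assumption payoff}: if $\{\tilde\gamma_t^{\xi}(\cdot,v)=-\infty\}$ had positive probability, combining an arbitrarily cheap replicating strategy for $\xi$ from time $t$ starting at $(0,v^{(2)})$ with the fixed hedging portfolio $V^{\xi}$ would produce a strategy hedging the zero payoff from time $t$ with arbitrarily negative initial cost, contradicting $\gamma_t^{0}>-\infty$. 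Once finiteness everywhere is established, convexity of $\tilde\gamma_t^{\xi}(\omega,\cdot)$ on $\R^d$ automatically implies continuity, hence lower semicontinuity, so $\tilde\gamma_t^{\xi}$ is indeed an $\cF_t$-normal convex integrand. The main obstacle is precisely the lower-finiteness step: the measurability, convexity, upper bound and DPP-identification arguments are standard, but ruling out $-\infty$ requires a careful arbitrage-style combination of AEP with the hedgeability assumption.
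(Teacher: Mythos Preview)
Your overall architecture matches the paper's proof almost exactly: apply Proposition~\ref{Meas+lsc} to get the convex $\cF_t$-normal integrand $\tilde\theta_t^\xi$, form $\bar D_t(v_{t-1},v_t)=\C_t(0,v_t^{(2)}-v_{t-1}^{(2)})+\tilde\theta_t^\xi(v_t)$, identify the essential infimum with the pointwise one (the paper packages this as Lemma~\ref{ess omega-wise lemm}), and then argue finiteness of $\tilde\gamma_t^\xi$ to upgrade convexity to continuity. Your measurability argument via projection of normal integrands and your upper-bound argument via a countable dense set plus convexity are harmless variants of what the paper does (countable rational infimum, and a direct measurable-selection contradiction with Lemma~\ref{LemmaGammaFini}).

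The one genuine gap is the lower-finiteness step. You write that from an arbitrarily cheap replication of $\xi$ starting at $(0,v^{(2)})$ you can ``combine'' with the hedging portfolio $V^\xi$ to manufacture a replication of $0$ with arbitrarily negative cost. But $V^\xi$ runs \emph{toward} $\xi$, not away from it, and in a market with nonlinear (merely convex) cost functions there is no way to subtract or reverse portfolio strategies: $\C_t(x-y)$ is not controlled by $\C_t(x)-\C_t(y)$. So the combination you invoke does not exist in this setting, and the argument as written does not prove $\gamma_t^\xi>-\infty$.

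The paper's route is much shorter and does not use $V^\xi$ at all for this step. It relies on the comparison $\gamma_t^\xi\ge \gamma_t^0$, which follows immediately from $\xi\in\R^d_+$ together with the standing hypothesis that $\C_T$ is increasing with respect to $\R^d_+$ (see the remark right after the definition of AEP). Then AEP gives $\gamma_t^0(V_{t-1})>-\infty$ directly, and a measurable-selection argument rules out $\tilde\gamma_t^\xi(\omega,x)=-\infty$ for some $x$ on a non-null set. You should replace your combination argument by this monotone comparison; note that it uses nonnegativity of the payoff, which is implicit in the paper's setting but which your proposal never invokes.
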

\begin{proof} By Proposition \ref{Meas+lsc}, we deduce that $\theta_{t}^\xi(V_{t})=\tilde\theta_{t}^\xi(V_t)$ a.s. for every $V_t\in L^0(\R^d,\cF_t)$ where $ \tilde\theta_{t}^\xi$ is an $\cF_{t}$-normal convex integrand. Therefore, $\bar D_t(v_{t-1},v_t):=\C_t(0,v_t^{(2)}-v_{t-1}^{(2)})+\tilde\theta_{t}^\xi(v_t)$ is an $\cF_t$-normal integrand, convex in $(v_{t-1},v_t)$. By Lemma \ref{ess omega-wise lemm}, we have  $\tilde\gamma_t^\xi(V_{t-1})=\gamma_{t}^{\xi}(V_{t-1})$ a.s. for any $V_{t-1}\in L^0(\R^d,\cF_t)$.\smallskip

We claim that the mapping $(\omega,v_{t-1})\mapsto \tilde\gamma_t^\xi(v_{t-1})$ is $\cF_t\otimes\cB(\R^d)$-measurable. Indeed, since $\tilde D_t$ is convex and admits finite values in $\R$, we necessarily  have $\inf_{v_t\in\R^d}\tilde D_t(v_{t-1},v_t) = \inf_{v_t\in\Q^d}\tilde D_t(v_{t-1},v_t)$, and the measurability  follows. Next, we  show that $\tilde\gamma_t^\xi(\omega,\cdot)\in\R$ a.s.. First,  $\tilde\gamma_t^\xi(\omega,x)>-\infty$ for all $x\in \R^d$ a.s.. Otherwise, by a measurable selection argument, we may find an $\cF_t$-measurable selection $V_{t-1}$ such that $-\infty=\tilde\gamma_t^\xi(V_{t-1})=\gamma_{t}^{\xi}(V_{t-1})$ on a non null set. This is in contradiction with the AEP condition. Similarly, by Lemma \ref{LemmaGammaFini}, we  deduce that  $\tilde\gamma_t^\xi(\omega, x)<\infty$ for all $x\in \R^d$ a.s.. Therefore, the random function $\tilde\gamma_t^\xi(\omega,\cdot)$ only takes finite values a.s..

We finally conclude that the mapping $v_{t-1}\mapsto \tilde\gamma_{t}^{\xi}(v_{t-1})$ is a real-valued random convex function. In particular, $\tilde\gamma_{t}^{\xi}$ is continuous.
\end{proof}

\begin{rem}{\rm Suppose that the cost functions $\C_t(s,z)$, $t\le T$, are convex in $z$. Under Assumption \ref{assumption payoff}, as $\gamma^{\xi} _{T}(V_{T-1}) = \xi^1+\C_T(0,\xi^{(2)}-V_{T-1}^{(2)}))$ is l.s.c. and convex in $V_{T-1}$, we deduce that Theorem \ref{Meas+lsc+conv} applies backwardly step by step. In particular, it is possible to compute  $\gamma_{t}^{\xi}(v_{t-1})$ at any time $t$ as a $\omega$-wise infimum.} $\triangle$
\end{rem}

In the following, we consider  conditions under which it is possible to compute $\omega$-wisely the essential supremum $\theta_{t}^\xi$. The main ingredient is the knowledge of the conditional support ${\rm supp}_{\cF_t}S_{t+1}$ of $S_{t+1}$ knowing $\cF_t$. Recall that ${\rm supp}_{\cF_t}S_{t+1}$ is the smallest $\cF_t$-measurable random closed set that contains $S_{t+1}(\omega)$ a.s., see \cite{EL}.

\begin{Assum}\label{assumption support} 
For each $t\le T-1$, there exists a family of Borel functions $(\alpha^m_t)_{m\geq 1}$ defined on $\R^{m}$ such that ${\rm supp}_{\cF_t}S_{t+1}$ admits  the Castaing representation $(\alpha^m_t(S_t))_{m\geq 1}$,  i.e. ${\rm supp}_{\cF_t}S_{t+1}={\rm cl}(\alpha^m_t(S_t))_{m\geq 1}$. \end{Assum}

\begin{prop} \label{Propag1} Suppose that there exists a lower semicontinuous function $\tilde \gamma_{t+1}^{\xi}$ defined on $\R^m\times \R^d$ such that   $\gamma_{t+1}^{\xi}(V_{t})=\tilde \gamma_{t+1}^{\xi}(S_{t+1},V_t)$ for all $V_t\in L^0(\R^d,\cF_t)$. Then, $\theta_{t}^{\xi}(V_{t})=\sup_{z\in \text{supp}_{\cF_t}S_{t+1} }\tilde \gamma_{t+1}^{\xi}(z,V_{t})$. Moreover, under Assumption \ref{assumption support}, there exists a function $ \tilde\theta_{t}^\xi(s,v)$ defined on $(s,v)\in\R^m\times \R^d$, which is l.s.c. in $v$, such that  $\theta_{t}^\xi(V_{t})=\tilde\theta_{t}^\xi(S_t,V_t)$ for all  $V_t\in L^0(\R^d,\cF_t)$ and we have:
$$\tilde\theta_{t}^\xi(s,v):=\sup_m \tilde \gamma_{t+1}^{\xi}(\alpha_m(s),v)\quad (s,v)\in \R^m\times \R^d.$$
At last, $\tilde\theta_{t}^\xi(s,v)$ is l.s.c. in $(s,v)$ if the functions $(\alpha_m)_{m\ge 1}$ are continuous and, if $\tilde \gamma_{t+1}^{\xi}(s,v)$ is convex in $v$, then $\tilde\theta_{t}^\xi(s,v)$ is convex in $v$.
\end{prop}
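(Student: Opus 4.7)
The plan is to prove the identity $\theta_t^\xi(V_t) = \sup_{z \in {\rm supp}_{\cF_t} S_{t+1}} \tilde\gamma_{t+1}^\xi(z, V_t)$ by a two-sided bound, and then to deduce the semicontinuity and convexity assertions by stability of l.s.c.\ and convex functions under countable suprema. First I would set $\Phi(\omega) := \sup_{z \in {\rm supp}_{\cF_t} S_{t+1}(\omega)} \tilde\gamma_{t+1}^\xi(z, V_t(\omega))$ and verify its $\cF_t$-measurability by picking any Castaing representation $(\zeta_m)_{m\ge 1}$ of the $\cF_t$-measurable closed random set ${\rm supp}_{\cF_t} S_{t+1}$: since $\tilde\gamma_{t+1}^\xi(\cdot, V_t(\omega))$ is l.s.c.\ and $(\zeta_m(\omega))_{m\ge 1}$ is dense in its closure, $\Phi(\omega) = \sup_m \tilde\gamma_{t+1}^\xi(\zeta_m(\omega), V_t(\omega))$, a countable supremum of $\cF_t$-measurable random variables. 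The bound $\theta_t^\xi(V_t) \le \Phi$ is then immediate because $S_{t+1} \in {\rm supp}_{\cF_t} S_{t+1}$ a.s.\ and $\Phi$ is $\cF_t$-measurable.

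The core difficulty lies in the reverse inequality $\theta_t^\xi(V_t) \ge \tilde\gamma_{t+1}^\xi(\zeta_m, V_t)$ for each $m$. I would argue by contradiction: assume $B_k := \{\tilde\gamma_{t+1}^\xi(\zeta_m, V_t) > \theta_t^\xi(V_t) + 1/k\} \in \cF_t$ has positive probability for some $k \ge 1$. Then on $B_k$ one has $\tilde\gamma_{t+1}^\xi(S_{t+1}, V_t) \le \theta_t^\xi(V_t) < \tilde\gamma_{t+1}^\xi(\zeta_m, V_t) - 1/k$ a.s.\ by definition of the conditional essential supremum. Using l.s.c.\ of $\tilde\gamma_{t+1}^\xi$ in the first coordinate, introduce the $\cF_t$-measurable random radius
$$\delta(\omega) := \sup\bigl\{r > 0 : \inf_{|z-\zeta_m(\omega)|\le r} \tilde\gamma_{t+1}^\xi(z, V_t(\omega)) \ge \tilde\gamma_{t+1}^\xi(\zeta_m(\omega), V_t(\omega)) - \tfrac{1}{2k}\bigr\},$$
which is a.s.\ strictly positive. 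Writing $B_k = \bigcup_n (B_k \cap \{\delta > 1/n\})$ and picking $n$ with $\bP(B_k \cap \{\delta > 1/n\}) > 0$, the event $\{|S_{t+1}-\zeta_m| < 1/n\}$ on $B_k \cap \{\delta > 1/n\}$ would force $\tilde\gamma_{t+1}^\xi(S_{t+1}, V_t) \ge \tilde\gamma_{t+1}^\xi(\zeta_m, V_t) - \tfrac{1}{2k}$, contradicting the earlier bound. Hence $\bP(|S_{t+1}-\zeta_m| < 1/n \mid \cF_t) = 0$ on an $\cF_t$-set of positive probability, contradicting the defining property of the conditional support \cite{EL}: every open neighborhood of an $\cF_t$-measurable selection $\zeta_m \in {\rm supp}_{\cF_t} S_{t+1}$ has strictly positive conditional probability a.s.

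Once the identity holds, the remaining assertions are straightforward. Under Assumption \ref{assumption support}, the choice $\zeta_m = \alpha^m_t(S_t)$ yields $\theta_t^\xi(V_t) = \sup_m \tilde\gamma_{t+1}^\xi(\alpha^m_t(S_t), V_t) = \tilde\theta_t^\xi(S_t, V_t)$ with $\tilde\theta_t^\xi(s, v) := \sup_m \tilde\gamma_{t+1}^\xi(\alpha^m_t(s), v)$. For each fixed $s$, the map $v \mapsto \tilde\gamma_{t+1}^\xi(\alpha^m_t(s), v)$ is l.s.c.\ and, where applicable, convex in $v$, and these properties transfer to the countable supremum $\tilde\theta_t^\xi(s, \cdot)$. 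When the $(\alpha^m_t)$ are continuous, each $(s,v) \mapsto \tilde\gamma_{t+1}^\xi(\alpha^m_t(s), v)$ is jointly l.s.c.\ on $\R^m \times \R^d$ as a composition of a continuous map with an l.s.c.\ function, and joint l.s.c.\ of $\tilde\theta_t^\xi$ again follows from stability of l.s.c.\ under countable suprema. The main obstacle is the contradiction step above, which requires delicate handling of the measurability and strict positivity of $\delta$ so that it interacts correctly with the characterization of the conditional support.
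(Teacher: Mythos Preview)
Your argument is correct. The paper's own proof is a one-line appeal to two auxiliary results stated in its Appendix: Proposition~\ref{esssup}, which asserts in general that $\esssup_{\cH} h(X) = \sup_{x \in {\rm supp}_{\cH} X} h(x)$ for any $\cH\otimes\cB(\R^k)$-measurable function $h$ that is l.s.c.\ in $x$ (this is quoted from \cite{BCL}), and Lemma~\ref{CountableRepGamma}, which reduces the supremum over the conditional support to the countable supremum over the Castaing representation via lower semicontinuity. Your proof is not a different route but rather an explicit unpacking of exactly these two ingredients in the specific setting at hand: your density-plus-l.s.c.\ step is Lemma~\ref{CountableRepGamma}, and your contradiction argument with the random radius $\delta$ and the defining property of the conditional support is precisely the hard direction of Proposition~\ref{esssup}. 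What you gain is a self-contained argument that does not rely on \cite{BCL}; what the paper gains is brevity. The measurability of $\delta$ that you flag as delicate can be sidestepped, as you implicitly suggest, by working instead with the nested $\cF_t$-measurable sets $A_n := \{\inf_{|z-\zeta_m|\le 1/n}\tilde\gamma_{t+1}^\xi(z,V_t)\ge \tilde\gamma_{t+1}^\xi(\zeta_m,V_t)-\tfrac{1}{2k}\}$, whose union is $\Omega$ by l.s.c., and choosing $n$ with $\bP(B_k\cap A_n)>0$.
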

\begin{proof} The proof is immediate by Proposition \ref{esssup} and  Lemma \ref{CountableRepGamma}. 
\end{proof}

\begin{Assum}\label{assumption support-1} 
For each $t\le T-1$, there exists a  family of   Borel functions  $(\alpha_{t}^m)_{m\ge 1}$ such that we have $S_{t+1}\in \{\alpha_{t}^m(S_{t}):~m\ge 1\}$ a.s. and such that $\bP(S_{t+1}=\alpha_{t}^m(S_{t})|\cF_{t})>0$ a.s. for all $m\ge 1$. 
\end{Assum}

\begin{prop} \label{Propag2} Suppose that there exists a Borel function $\tilde \gamma_{t+1}^{\xi}$ defined on $\R^m\times \R^d$ such that   $\gamma_{t+1}^{\xi}(V_{t})=\tilde \gamma_{t+1}^{\xi}(S_{t+1},V_t)$ for all $V_t\in L^0(\R^d,\cF_t)$. Then, under  Assumption \ref{assumption support-1} , there exists a Borel function $ \tilde\theta_{t}^\xi(s,v)$ defined on $(s,v)\in\R^m\times \R^d$ such that  $\theta_{t}^\xi(V_{t})=\tilde\theta_{t}^\xi(S_t,V_t)$ for all  $V_t\in L^0(\R^d,\cF_t)$ and we have:
$$\tilde\theta_{t}^\xi(s,v):=\sup_m \tilde \gamma_{t+1}^{\xi}(\alpha_m(s),v)\quad (s,v)\in \R^m\times \R^d.$$
\end{prop}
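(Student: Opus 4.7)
The plan is to define $\tilde\theta_t^\xi(s,v):=\sup_{m\geq 1}\tilde\gamma_{t+1}^\xi(\alpha_t^m(s),v)$ as suggested by the statement, and then to verify two inequalities separately: that $\tilde\theta_t^\xi(S_t,V_t)$ is an $\cF_t$-measurable upper bound of $\gamma_{t+1}^\xi(V_t)$ (which yields one direction through the essential-supremum minimality), and conversely that every single term $\tilde\gamma_{t+1}^\xi(\alpha_t^m(S_t),V_t)$ is dominated a.s.\ by $\theta_t^\xi(V_t)$ (which yields the reverse direction after taking a countable sup). The Borel measurability of $\tilde\theta_t^\xi$ is immediate: it is a countable supremum of compositions of Borel functions, hence Borel on $\R^m\times\R^d$.

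For the first direction, fix $V_t\in L^0(\R^d,\cF_t)$. Under Assumption \ref{assumption support-1} the event $\bigcup_{m\ge 1}\{S_{t+1}=\alpha_t^m(S_t)\}$ has full probability, so
\[
\gamma_{t+1}^\xi(V_t)=\tilde\gamma_{t+1}^\xi(S_{t+1},V_t)\le \sup_{m\ge 1}\tilde\gamma_{t+1}^\xi(\alpha_t^m(S_t),V_t)=\tilde\theta_t^\xi(S_t,V_t)\quad\text{a.s.}
\]
Since the right-hand side is $\cF_t$-measurable, the definition of the conditional essential supremum gives $\theta_t^\xi(V_t)\le \tilde\theta_t^\xi(S_t,V_t)$ a.s.

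For the reverse direction, fix $m\ge 1$ and set
\[
A_m:=\{S_{t+1}=\alpha_t^m(S_t)\},\qquad B_m:=\{\tilde\gamma_{t+1}^\xi(\alpha_t^m(S_t),V_t)>\theta_t^\xi(V_t)\}.
\]
The set $B_m$ is $\cF_t$-measurable because both variables inside are $\cF_t$-measurable. On $A_m$, the identity $\tilde\gamma_{t+1}^\xi(\alpha_t^m(S_t),V_t)=\gamma_{t+1}^\xi(V_t)\le\theta_t^\xi(V_t)$ holds a.s., so $\bP(A_m\cap B_m)=0$. Writing this probability as
\[
\bP(A_m\cap B_m)=\E\bigl[\mathbf 1_{B_m}\,\bP(A_m\mid\cF_t)\bigr]
\]
and invoking Assumption \ref{assumption support-1} which ensures $\bP(A_m\mid\cF_t)>0$ a.s., we conclude $\bP(B_m)=0$. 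Therefore $\tilde\gamma_{t+1}^\xi(\alpha_t^m(S_t),V_t)\le \theta_t^\xi(V_t)$ a.s.\ for every $m$, and taking the countable supremum yields $\tilde\theta_t^\xi(S_t,V_t)\le \theta_t^\xi(V_t)$ a.s. Combining the two directions gives the claimed identity.

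The subtle step is the reverse direction: one cannot pointwise invert $\tilde\gamma_{t+1}^\xi(S_{t+1},V_t)\le\theta_t^\xi(V_t)$ to obtain the corresponding inequality at $\alpha_t^m(S_t)$ for each fixed $m$, because the random index $m^*(\omega)$ for which $S_{t+1}(\omega)=\alpha_t^{m^*}(S_t(\omega))$ varies with $\omega$. The positivity of $\bP(\,\cdot\mid\cF_t)$ on each atom of the $\cF_t$-conditional distribution of $S_{t+1}$ is precisely what lets one decouple the index from the randomness; this is where Assumption \ref{assumption support-1} is essential and where the proof would fail if one only knew $S_{t+1}$ belongs a.s.\ to a countable set of $\cF_t$-measurable selectors.
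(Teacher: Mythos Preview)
Your proof is correct and follows essentially the same route as the paper. The paper dispatches the proposition by invoking Lemma~\ref{CountSupp}, whose proof is exactly your two-direction argument: the upper bound comes from $S_{t+1}\in\{\alpha_t^m(S_t):m\ge1\}$ and $\cF_t$-measurability of the supremum, while the lower bound uses that $\bP(S_{t+1}=\alpha_t^m(S_t)\mid\cF_t)>0$ a.s.\ to pass from an inequality holding on $A_m$ to one holding a.s.; your formulation via the null set $B_m$ is just a cosmetic repackaging of the paper's step of multiplying by $1_{A_m}$ and taking the $\cF_t$-conditional expectation.
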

\begin{proof} The proof is immediate by Lemma \ref{CountSupp}. Note that we do not suppose that $\C_t$ is convex to obtain this result. \end{proof}

\begin{coro} \label{CoroPropag1}  Assume that the assumptions of Proposition \ref{Propag1} or Proposition \ref{Propag2}  hold    and Condition AEP holds. Suppose that  $\tilde \gamma_{t+1}^{\xi}(s,v)$ is convex in $v$. Then, $\gamma_{t}^{\xi}(V_{t-1})=\tilde \gamma_{t}^{\xi}(S_{t},V_{t-1})$ where $\tilde \gamma_{t}^{\xi}(s,v)$ is an $\cF_t$-normal integrand, convex in $v$. Moreover,

$$\tilde \gamma_{t}^{\xi}(s,v)=\inf_{y\in\R^{d}}\left(\C_t(s,(0,y^{(2)}-v^{(2)}))+\sup_m \tilde \gamma_{t+1}^{\xi}(\alpha_m(s),y) \right).$$

\end{coro}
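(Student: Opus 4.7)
The plan is to combine the representation of $\theta_t^\xi$ from Proposition \ref{Propag1} or Proposition \ref{Propag2} with the pointwise-infimum reduction of the essential infimum used in Theorem \ref{Meas+lsc+conv}, now carried out with explicit dependence on $S_t$.

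First, applying the relevant proposition gives $\theta_t^\xi(V_t) = \tilde\theta_t^\xi(S_t, V_t)$ with $\tilde\theta_t^\xi(s, v) := \sup_m \tilde\gamma_{t+1}^\xi(\alpha_m(s), v)$, which is convex in $v$ (a supremum of convex functions, by the convexity hypothesis on $\tilde\gamma_{t+1}^\xi$) and jointly measurable in $(s, v)$. Plugging this representation into the dynamic programming principle (Theorem \ref{DPP}) and pulling the $\cF_t$-measurable term $\C_t(S_t, (0, V_t^{(2)} - V_{t-1}^{(2)}))$ inside the conditional supremum yields
$$\gamma_t^\xi(V_{t-1}) = \essinf_{V_t \in L^0(\R^d, \cF_t)} \bar D_t(S_t, V_{t-1}, V_t),$$
where $\bar D_t(s, v, y) := \C_t(s, (0, y^{(2)} - v^{(2)})) + \tilde\theta_t^\xi(s, y)$. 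Since $\C_t(s, \cdot)$ is convex in the present subsection, $\bar D_t(s, \cdot, \cdot)$ is jointly convex in $(v, y)$, and the whole expression is an $\cF_t$-normal integrand.

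Second, I apply Lemma \ref{ess omega-wise lemm} (exactly as in Theorem \ref{Meas+lsc+conv}) to trade the essential infimum for the pointwise one: $\gamma_t^\xi(V_{t-1}) = \tilde\gamma_t^\xi(S_t, V_{t-1})$ with
$$\tilde\gamma_t^\xi(s, v) := \inf_{y \in \R^d} \bar D_t(s, v, y),$$
which is the announced formula once $\tilde\theta_t^\xi$ is expanded.

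Third, I verify the remaining properties of $\tilde\gamma_t^\xi$. Convexity of $\tilde\gamma_t^\xi(s, \cdot)$ follows from joint convexity of $\bar D_t$ in $(v, y)$ via the standard infimum argument. Finiteness is obtained as in Theorem \ref{Meas+lsc+conv}: were $\tilde\gamma_t^\xi(s, v) = -\infty$ on a non-negligible set, a measurable selection would produce $V_{t-1}$ with $\gamma_t^\xi(V_{t-1}) = -\infty$, contradicting AEP; the upper bound comes from Lemma \ref{LemmaGammaFini}. Since real-valued convex functions on $\R^d$ are continuous, $\tilde\gamma_t^\xi(s, \cdot)$ is in particular l.s.c.; joint measurability follows from restricting the infimum to $y \in \Q^d$, which is legitimate because $\bar D_t(s, v, \cdot)$ is continuous where finite, so that $\tilde\gamma_t^\xi$ is a countable infimum of jointly measurable functions and hence an $\cF_t$-normal integrand. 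The main obstacle is the same as in Theorem \ref{Meas+lsc+conv}: ensuring finiteness of $\tilde\gamma_t^\xi$ (via AEP together with Lemma \ref{LemmaGammaFini} and a measurable selection argument) so that continuity of real-valued convex functions on $\R^d$ can be invoked to reduce to a countable infimum over $\Q^d$, thereby obtaining the joint measurability needed to call $\tilde\gamma_t^\xi$ a normal integrand.
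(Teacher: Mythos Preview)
Your proof is correct and follows essentially the same route as the paper's: obtain $\tilde\theta_t^\xi(s,v)=\sup_m\tilde\gamma_{t+1}^\xi(\alpha_m(s),v)$ from Proposition \ref{Propag1} or \ref{Propag2}, note that convexity in $v$ is preserved under the supremum and adds to the convexity of $\C_t(s,\cdot)$ to make $\bar D_t$ jointly convex in $(v,y)$, and then ``argue similarly to the proof of Theorem \ref{Meas+lsc+conv}'' --- which is exactly the Lemma \ref{ess omega-wise lemm} reduction, the AEP/Lemma \ref{LemmaGammaFini} finiteness check, and the $\Q^d$-restriction for joint measurability that you spell out in detail.
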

\begin{proof} Under our assumptions, $\theta_{t}^\xi(V_{t})=\tilde\theta_{t}^\xi(S_t,V_t)$ for all  $V_t\in L^0(\R^d,\cF_t)$ where $ \tilde\theta_{t}^\xi(s,v)=\sup_m \tilde \gamma_{t+1}^{\xi}(\alpha_m(s),v) $ by Proposition \ref{Propag1} or Proposition \ref{Propag2}. As a supremum, $ \tilde\theta_{t}^\xi(s,v)$ is convex in $v$ if $\tilde \gamma_{t+1}^{\xi}(s,v)$ is. As $\C_t(s,y)$ is also  convex in $y$, we deduce that $D_t^{\xi}(y,v)=\C_t(s,(0,y^{(2)}-v^{(2)}))+\tilde\theta_{t}^\xi(s,y)$ is convex in $(y,v)$. Now, by arguing similarly to the proof of Theorem \ref{Meas+lsc+conv}, under AEP, $\tilde\gamma_t^\xi(v_{t-1})$ is a real- valued convex function in $v_{t-1}$  a.s..
\end{proof}

\subsection{Computational feasibility under strong AIP no-arbitrage condition}

The results of Section \ref{CF-convex} are not a priori sufficient to compute backwardly $\theta_{t-1}^\xi$ as we need $\gamma_{t}^\xi(s,v)$ to be l.s.c. in $s$, see Proposition \ref{Propag1}. This is why, we introduce the following conditions.

\begin{Assum}\label{g} The payoff function $\xi$ is of the form  $\xi = g(S_T)$, where $g\in \R^d_+ $ is continuous. Moreover, $\xi$ is hedgeable, i.e. there exists a portfolio process $(V_u^{\xi})_{u=0}^T$ such that $\xi=V_T^{\xi}$. 
\end{Assum}

\begin{Assum}\label{CondSuppHemmiCont} 
The conditional support is such that ${\rm supp}_{\cF_t}S_{t+1}=\phi_t(S_t)$ where $\phi_t$ is a set-valued lower hemicontinuous function, see Definition \ref{def: continuous}, with compact values such that $\phi_t(S_t)\subseteq \bar B(0,R_t(S_t))$ where $R_t$ is a continuous function on $\R^m$.
\end{Assum}

Note that under Assumption \ref{assumption support},  $\phi_t(S_t)={\rm cl}\{\alpha_m(S_t):~m\ge 1\}$  defines a   set-valued lower hemicontinuous function of $S_t$ if the functions $(\alpha_m)_{m\ge 1}$ are continuous, see Lemma \ref{lower hemicontinuous support}. In practice, we should be able to evaluate the conditional support from empirical data and deduce a continuous countable sense subset of it as a Castaing representation. Note that this representation is not unique.

\begin{defi}
We say that the  condition AIP holds at time $t$ if the minimal cost $c_t(0)=\gamma_t^0(0)$ of the  European zero claim $\xi=0$ is $0$ at time $t\le T$. We say that AIP holds if AIP holds at any time.
\end{defi} 

The condition AIP has been introduced for the first time in the paper \cite{BCL}. This is a weak no-arbitrage condition which is clearly satisfied in the real financial markets i.e. the price of a non negative payoff is non negative.

\begin{lemm} \label{LemmaAEP-AIP} Suppose that the cost functions are either sub-additive or super-additive. Then,  AIP implies AEP.
\end{lemm}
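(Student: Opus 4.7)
The plan is to apply the dynamic programming principle (Theorem~\ref{DPP}) at time $t$ twice, once with argument $V_t$ and once with argument $0$, and then to compare the two resulting expressions via the one-step sub- or super-additivity of $\C_t$, using AIP at time $t$ as the bridge. The case $t=T$ is immediate from $\gamma_T^0(V_T) = \C_T(0, -V_T^{(2)}) \ge -h_T(S_T, (0, -V_T^{(2)})) > -\infty$, so I focus on $t \le T-1$.

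For such $t$, Theorem~\ref{DPP} yields
\begin{align*}
\gamma_t^0(V_t) &= \essinf_{W \in L^0(\R^d, \cF_t)} \esssup_{\cF_t} \bigl[\C_t(0, W^{(2)} - V_t^{(2)}) + \gamma_{t+1}^0(W)\bigr], \\
\gamma_t^0(0) &= \essinf_{W \in L^0(\R^d, \cF_t)} \esssup_{\cF_t} \bigl[\C_t(0, W^{(2)}) + \gamma_{t+1}^0(W)\bigr] = 0,
\end{align*}
the second equality being AIP at time $t$. Writing both formulas with the same dummy variable $W$ allows the comparison to reduce to a deterministic pointwise inequality between the two integrands, the $\gamma_{t+1}^0(W)$ term being common.

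The crucial step is precisely this inequality. Sub-additivity applied to $W^{(2)} = (W^{(2)} - V_t^{(2)}) + V_t^{(2)}$ gives $\C_t(0, W^{(2)} - V_t^{(2)}) \ge \C_t(0, W^{(2)}) - \C_t(0, V_t^{(2)})$, whereas super-additivity applied to $W^{(2)} - V_t^{(2)} = W^{(2)} + (-V_t^{(2)})$ gives $\C_t(0, W^{(2)} - V_t^{(2)}) \ge \C_t(0, W^{(2)}) + \C_t(0, -V_t^{(2)})$. In both cases the corrective term $\C_t(0, \pm V_t^{(2)})$ is $\cF_t$-measurable and independent of $W$, so it factors out of the essential supremum and essential infimum. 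Combining with AIP, I obtain $\gamma_t^0(V_t) \ge -\C_t(0, V_t^{(2)})$ in the sub-additive case and $\gamma_t^0(V_t) \ge \C_t(0, -V_t^{(2)})$ in the super-additive case; in either regime, the growth bound $|\C_t| \le h_t$ with $h_t$ a deterministic continuous function ensures the lower bound is a.s.\ finite, hence $\gamma_t^0(V_t) > -\infty$ a.s.

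The main subtlety I anticipate is that Theorem~\ref{DPP} is stated for $V_{t-1} \in L^0(\R^d, \cF_{t-1})$ while the AEP definition places an $\cF_t$-measurable random vector in the role of $V_{t-1}$; however, inspection of the proof of Theorem~\ref{DPP} shows that the measurability of $V_{t-1}$ is used only to preserve the $\cF_t$-measurability of the one-step cost, so the extension is free. This is the only technical point; beyond it, the argument uses nothing more than AIP, the growth bound on $\C_t$, and one of the two additivity inequalities.
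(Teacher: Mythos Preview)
Your proof is correct and essentially identical to the paper's: both use sub- (or super-) additivity of $\C_t$ to bound the one-step cost from $V_t$ below by the one-step cost from $0$ minus an $\cF_t$-measurable correction, pull that correction out of the $\essinf/\esssup$, and invoke AIP together with the growth bound $|\C_t|\le h_t$. The paper phrases the argument via the $D_t^0$ notation rather than the explicit DPP formula, and is less explicit than you are about the $t=T$ base case and the $\cF_{t-1}$ versus $\cF_t$ measurability point, but the computations coincide.
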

\begin{proof}
We prove it in the case where the cost function is sub-additive, the supper-additive case is similar. Suppose that AIP holds and $\C_t(s,v)$ is sub-additive in $v$.  For any $V_t,\tilde V_t\in L^0(\R^d,\cF_t)$, we have by the definition of $D_t^0$ (see \ref{D2}):
\begin{align*}
    D_t^0(S_t,V_t,\tilde V_t) &= \C_t(S_t,\tilde V_t - V_t) + \theta_t^0(S_t,\tilde V_t),\\
    &\geq \C_t(S_t,\tilde V_t) + \theta_t^0(S_t,\tilde V_t) - \C_t(S_t,V_t),\\
    &= D_t^0(S_t,0,\tilde V_t) - \C_t(S_t,V_t).
\end{align*}
Under AIP,  $D_t^0(S_t,0,\tilde V_t)\ge 0$ hence  $D_t^0(S_t,V_t,\tilde V_t)\geq -\C_t(S_t,V_t)$. We deduce that $\gamma_t^0(V_t) = \essinf_{\tilde V_t} D_t^0(S_t,V_t,\tilde V_t) \geq -\C_t(S_t,V_t) > -\infty$.
\end{proof}

\begin{defi}
We say that the  condition SAIP (Strong AIP condition) holds at time $t$ if AIP holds at time $t$  and, for any $Z_t\in L^0(\R^d,\cF_t)$, we have $D_t^0(S_t,0,Z_t)=0$ if and only if $Z_t^{(2)}=0$ a.s.. We say that SAIP holds if SAIP holds at any time.
\end{defi} 

Recall that $D_t^0(S_t,0,Z_t)$ is given by (\ref{D2}) and it is the  minimal cost expressed in cash that is needed at time $t$ to hedge the zero payoff when we start from the initial strategy $V_t=(\theta_t^0(Z_t),Z_t^{(2)})$, initial value of a portfolio process $(V_u)_{t\le u\le T}$ such that $V_T=0$. Therefore, the condition SAIP states that the minimal cost of the zero payoff is $0$ at time $t$ and this minimal cost is only attained  by the zero strategy $V_t=0$. This is intuitively clear as soon as any non null transaction implies positive costs. \smallskip

The following result is  our main contribution of this section: It states that the minimal cost function $\gamma_{t}^{\xi}$ is a l.s.c. function  of $S_t$ and $V_{t-1}$, i.e. $\gamma_{t}^{\xi}$ inherits from the lower semicontinuity of $\gamma_{t+1}^{\xi}$, under Assumptions \ref{g} and  \ref{CondSuppHemmiCont}, if SAIP holds as we shall see. We introduce the notation
$$S^{d-1}(0,1)=\{z\in \R^d:~z^1=0\,{\rm and\,}|z|=1\}.$$

\begin{theo} \label{PropagTheo} Suppose that $\C_t$ is positively super $\delta$-homogeneous.  Suppose that there exists a $\cF_{t+1}$-normal integrand $\tilde \gamma_{t+1}^{\xi}$ defined on $\Omega\times\R^m\times \R^d$ such that   $\gamma_{t+1}^{\xi}(V_{t})=\tilde \gamma_{t+1}^{\xi}(S_{t+1},V_t)$ for all $V_t\in L^0(\R^d,\cF_t)$. Assume that Assumption \ref{g} and Assumption \ref{CondSuppHemmiCont} hold. Suppose that the cost function $\C_t(s,z)$ is an $\cF_t$-normal integrand and $\C_t$ is either super-additive  or  sub-additive. Then, if $\inf_{z\in S^{d-1}(0,1)}D_t^0(S_t,0,z)>0$,  $\gamma_{t}^{\xi}(V_{t-1})=\tilde \gamma_{t}^{\xi}(S_{t},V_{t-1})$ where $\tilde \gamma_{t}^{\xi}(s,v_{t-1})$ is $\cF_t$-normal integrand.
\end{theo}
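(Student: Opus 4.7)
My plan is to inherit the $\cF_t$-normal integrand property of $\tilde\gamma_t^\xi$ from that of $\tilde\gamma_{t+1}^\xi$ by treating the $\esssup$ and the $\essinf$ in the DPP identity of Theorem \ref{DPP} separately. Assumption \ref{CondSuppHemmiCont} gives a lower hemicontinuous $\phi_t$ with compact values, hence a continuous Castaing representation $(\alpha_t^m)_{m\ge 1}$. Proposition \ref{Propag1} then yields $\theta_t^\xi(V_t)=\tilde\theta_t^\xi(S_t,V_t)$ with $\tilde\theta_t^\xi(s,v)=\sup_m\tilde\gamma_{t+1}^\xi(\alpha_t^m(s),v)$ jointly $(s,v)$-l.s.c. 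The DPP (Theorem \ref{DPP}) combined with an $\omega$-wise evaluation of the conditional essential infimum (Lemma \ref{ess omega-wise lemm}) applied to the $(s,v,y)$-l.s.c.\ integrand
\[
H_t(s,v,y)\ :=\ \C_t(s,(0,y^{(2)}-v^{(2)}))+\tilde\theta_t^\xi(s,y)
\]
produces $\gamma_t^\xi(V_{t-1})=\tilde\gamma_t^\xi(S_t,V_{t-1})$ with $\tilde\gamma_t^\xi(s,v)=\inf_{y\in\R^d}H_t(s,v,y)$, and joint $\cF_t\otimes\cB(\R^d)$-measurability of $\tilde\gamma_t^\xi$ follows by restricting this infimum to a countable dense set of $\R^d$.

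The delicate point, which I expect to be the main obstacle, is the lower semicontinuity of $\tilde\gamma_t^\xi$ in $(s,v)$: the infimum of a jointly l.s.c.\ function is not automatically l.s.c.\ in the parameter, so uniform coercivity in $y$ over compact $(s,v)$ sets is required. Cash invariance and the independence of $\tilde\theta_t^\xi$ from the first coordinate reduce the infimum to one over $y^{(2)}\in\R^{d-1}$. For $|y^{(2)}|=\lambda\ge 1$ with $y^{(2)}=\lambda u^{(2)}$, I would use the super- or sub-additivity of $\C_t$ to split $(0,\lambda u^{(2)}-v^{(2)})=(0,\lambda u^{(2)})+(0,-v^{(2)})$ and obtain a lower bound
\[
H_t(s,v,(0,\lambda u^{(2)}))\ \ge\ \C_t(s,(0,\lambda u^{(2)}))+\tilde\theta_t^0(s,(0,\lambda u^{(2)}))-\kappa_t(s,v),
\]
with $\kappa_t$ continuous; here I use $\tilde\theta_t^\xi\ge\tilde\theta_t^0$, which holds since $\xi=g(S_T)\ge 0$ under Assumption \ref{g}. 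Positive super $\delta$-homogeneity of $\C_t$ next gives $\C_t(s,(0,\lambda u^{(2)}))\ge\delta(\lambda)\C_t(s,(0,u^{(2)}))$. Combining this with the strict lower bound $\inf_{|u^{(2)}|=1}D_t^0(s,0,(0,u^{(2)}))>0$, which by l.s.c.\ dependence is uniform on compacts in $s$, and with $\delta(\lambda)\to\infty$, one concludes $H_t(s,v,(0,\lambda u^{(2)}))\to+\infty$ as $\lambda\to\infty$ uniformly for $(s,v)$ in compacts.

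With uniform coercivity in hand, the l.s.c.\ conclusion is standard: for $(s_n,v_n)\to(s,v)$ and near-minimizers $y_n$, coercivity keeps $(y_n^{(2)})$ in a compact set, a subsequence converges to some $y^\ast$, and joint l.s.c.\ of $H_t$ yields $\tilde\gamma_t^\xi(s,v)\le H_t(s,v,y^\ast)\le\liminf H_t(s_n,v_n,y_n)=\liminf\tilde\gamma_t^\xi(s_n,v_n)$. Finiteness in $\R$ is then clear: $<\infty$ from Lemma \ref{LemmaGammaFini} under Assumption \ref{g}, and $>-\infty$ because the hypothesis implies AIP at time $t$, which together with the super/sub-additivity assumption yields AEP via Lemma \ref{LemmaAEP-AIP}. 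The truly delicate part remains patching the super-additive and sub-additive cases so that the correction $\kappa_t(s,v)$ depends continuously on $(s,v)$ and so that the dominant term $\delta(\lambda)D_t^0(s,0,(0,u^{(2)}))$ survives uniformly over the unit sphere in $u^{(2)}$; once this coercivity is secured, the $\cF_t$-normal integrand property of $\tilde\gamma_t^\xi$ follows.
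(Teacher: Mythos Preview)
Your overall strategy matches the paper's: turn the essential supremum into a pointwise supremum over $\phi_t(S_t)$ to get an l.s.c.\ $\tilde\theta_t^\xi$, write $\tilde\gamma_t^\xi(s,v)=\inf_y D_t^\xi(s,v,y)$, and then argue that the infimum may be restricted to a compact set depending continuously (or upper hemicontinuously) on $(s,v)$. The paper packages the last step via Proposition~\ref{prop:lsc value} applied to an explicit ball $\bar B(0,r_t(s,v)+1)$, whereas you use a direct sequential/coercivity argument; these are equivalent once coercivity is in place.

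There is, however, a genuine gap in your coercivity step. You only invoke super $\delta$-homogeneity of $\C_t$ to obtain $\C_t(s,(0,\lambda u^{(2)}))\ge\delta(\lambda)\C_t(s,(0,u^{(2)}))$, and then immediately jump to the ``dominant term'' $\delta(\lambda)D_t^0(s,0,(0,u^{(2)}))$. But $\C_t(s,(0,u^{(2)}))$ may well be negative, and nothing you have written controls $\tilde\theta_t^0(s,(0,\lambda u^{(2)}))$ as $\lambda\to\infty$. What is actually needed is the inequality $D_t^0(s,0,\lambda y)\ge\delta(\lambda)D_t^0(s,0,y)$ for $\lambda\ge 1$, which is the content of Lemma~\ref{Delta-homogD}: it is proved by backward induction, using super $\delta$-homogeneity of \emph{all} the cost functions $\C_s$, $s\ge t$, to propagate the scaling through $\gamma_{s}^0\to\theta_{s-1}^0\to D_{s-1}^0$. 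Without this, the hypothesis $\inf_{z\in S^{d-1}(0,1)}D_t^0(S_t,0,z)>0$ cannot be leveraged into coercivity.

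A second, smaller point you do not address: the coercivity bound and hence the l.s.c.\ of $\tilde\gamma_t^\xi$ are only valid on the open set $\cO_t=\{(s,v):i_t(s)>0\}$ where $i_t(s)=\inf_{z\in S^{d-1}(0,1)}D_t^0(s,0,z)$. The hypothesis guarantees $(S_t,v)\in\cO_t$ a.s., but to produce a globally defined $\cF_t$-normal integrand one must replace $\tilde\gamma_t^\xi$ by its l.s.c.\ regularization ${\rm cl}\,p_t^\xi$ (an $\cF_t$-normal integrand by \cite[Theorem~14.47]{RW}) and then invoke Lemma~\ref{lem: lsc envelope} to see that ${\rm cl}\,p_t^\xi=\tilde\gamma_t^\xi$ on $\cO_t$, hence a.s.\ at $(S_t,v_{t-1})$. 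Your proposal omits this closure step.
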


\begin{proof}Since $\tilde \gamma_{t+1}^{\xi}(s,v)$ is l.s.c. in $s$, we deduce that $\theta_{t}^\xi(V_{t})=\tilde\theta_{t}^\xi(S_t,V_t)$ by Proposition  \ref{esssup}, for all  $V_t\in L^0(\R^d,\cF_t)$   where, by Assumption \ref{CondSuppHemmiCont},
$$\tilde\theta_{t}^\xi(s,v)=\sup_{z\in\phi_t(S_t)}\tilde \gamma_{t+1}^{\xi}(z,v).$$
As $\phi_t$ is lower hemicontinuous by assumption, we deduce by \cite[Lemma 17.29]{AB} that $\tilde\theta_{t}^\xi(s,v)$ is l.s.c. in $(s,v)$. Therefore, the function   
$$D_t^\xi(s,v_{t-1},v_{t})= \C_{t}(s,(0,v_{t}^{(2)}- v_{t-1}^{(2)})) + \tilde\theta_{t}^\xi(s,v_{t})$$ is l.s.c. in $(s,v_{t-1},v_{t})$ by assumption on $\C_t$. By Lemma \ref{ess omega-wise lemm}, we get that  $\gamma_{t}^{\xi}(V_{t-1})=\tilde \gamma_{t}^{\xi}(S_{t},V_{t-1})$ where $\tilde \gamma_{t}^{\xi}(s,v_{t-1})=\inf_{v_{t}\in \R^d}D_t^\xi(s,v_{t-1},v_{t})$. The next step is to show that $\tilde \gamma_{t}^{\xi}(s,v_{t-1})=\inf_{v_{t}\in \tilde\phi_t(s,v_{t-1})}D_t^\xi(s,v_{t-1},v_{t})$ where $\tilde\phi_t$ is a set-valued upper hemicontinuous function, see Definition \ref{def: upper hemi}, with compact values. We then conclude that $\tilde \gamma_{t}^{\xi}(s,v_{t-1})$ is l.s.c. in $(s,v_{t-1})$ by Proposition \ref{prop:lsc value}.

To obtain $\tilde \phi_t$, first observe that $\gamma_{t}^{\xi}(V_{t-1})\le D_t^\xi(s,v_{t-1},0)$ hence we get that  $\gamma_{t}^{\xi}(V_{t-1})=\tilde \gamma_{t}^{\xi}(S_{t},V_{t-1})$ where $\tilde \gamma_{t}^{\xi}(s,v_{t-1})=\inf_{v_{t}\in K_t(s,v_{t-1})}D_t^\xi(s,v_{t-1},v_{t})$ and 
$$K_t(s,v_{t-1})=\left\{v_{t} \in \R^d:~  D_t^\xi(s,v_{t-1},v_{t})\le D_t^\xi(s,v_{t-1},0) \right\}.$$
Since $\C_T$ is increasing w.r.t. $\R^d_+$, we deduce that $D_t^\xi(s,v_{t-1},v_{t})\ge D_t^0(s,v_{t-1},v_{t})$. Moreover,
$$D_t^0(s,v_{t-1},v_{t})=\C_{t}(s,(0,v_{t}^{(2)}- v_{t-1}^{(2)})) + \theta_{t}^0(s,v_{t})\ge \C_{t}(s,(0,- v_{t-1}^{(2)}))+D_t^0(s,0,v_{t})$$ in the case where $\C_t$ is super-additive  and, if $\C_t$ is sub-additive, we have 
$$D_t^0(s,v_{t-1},v_{t})=\C_{t}(s,(0,v_{t}^{(2)}- v_{t-1}^{(2)})) + \theta_{t}^0(s,v_{t})\ge -\C_{t}(s,(0,v_{t-1}^{(2)}))+D_t^0(s,0,v_{t}).$$
As $\C_t$ is dominated by a continuous function by hypothesis, we get that $D_t^0(s,v_{t-1},v_{t})\ge \tilde h_t(s,v_{t-1})+D_t^0(s,0,v_{t})$ where $\tilde h_t$ is a continuous function. Moreover, by Lemma \ref{Delta-homogD}, if 
$|v_t|\ge 1$, 
\begin{align}\label{eq: D_t^0}
    D_t^0(s,0,v_{t})\ge \delta(|v_t|) D_t^0(s,0,v_{t}/|v_t|)\ge \delta(|v_t|)\ \inf_{z\in S^{d-1}(0,1)}D_t^0(s,0,z).
\end{align}

 By Lemma \ref{ContinuousBoundD}, $|D_t^\xi(s,v_{t-1},0)|\le \hat h_t^\xi(s,v_{t-1})$ for some continuous function $\hat h_t^\xi\ge 0$. Recall that $\inf_{z\in S^{d-1}(0,1)}D_t^0(S_t,0,z)>0$ a.s. by assumption. It follows that   $K_t(s,v_{t-1})\subseteq \tilde \phi_t(s,v_{t-1}):= \bar B_t(0,r_t(s,v_{t-1})+1)$ where
\bean r_t(s,v_{t-1})&:=&\delta^{-1}\left(\frac{\lambda_t(s,v_{t-1})}{i_t(s)}  \right),\\
i_t(s)&:=&\inf_{z\in S^{d-1}(0,1)}D_t^0(s,0,z),\,\lambda_t(s,v_{t-1})=|\tilde h_t(s,v_{t-1})|+\hat h_t^\xi(s,v_{t-1}).
\eean
Since $\lambda_t$ is continuous and $i_t$ is l.s.c. by Proposition \ref{prop:lsc value}, we deduce that $\lambda_t/i_t$ is u.s.c. on the open set $\cO_t:=\{(s,v_{t-1})\in \R^m\times \R^d:~i_t(s,v_{t-1})>0\}$. As $\delta^{-1}$ is continuous and increasing, we finally get that $r_t$ is also u.s.c. in $(s,v_{t-1})\in \cO_t$.  By Lemma \ref{lem: uppercontinuous ball}, we deduce  that the function $\tilde\phi_t$ is upper hemicontinuous in $(s,v_{t-1})\in \cO_t$. Therefore, $\tilde \gamma_{t}^{\xi}(s,v_{t-1})=\inf_{v_{t}\in \tilde\phi_t(s,v_{t-1})}D_t^\xi(s,v_{t-1},v_{t})$ is l.s.c. on  $\cO_t$ by Proposition \ref{prop:lsc value}.  Observe that $(S_t,z)\in\cO_t$ a.s.  for all $z\in S(0,1)$ a.s. under our hypothesis.

Consider the mapping $p_t^\xi(s, v_{t-1}):=\inf_{v_{t}\in \R^d}D_t^\xi(s,v_{t-1},v_{t})$ and its
 l.s.c. regularization   ${\rm cl}p_t^\xi(s, v_{t-1})$. Since $D_t^\xi$ is an $\cF_t$-normal integrand by our assumption, we deduce by \cite[Theorem 14.47]{RW} that ${\rm cl}p_t^\xi(s, v_{t-1})$ is an $\cF_t$-normal integrand. Moreover, we know that on the open set $\cO_t$, $\tilde\gamma_t^\xi(s,v_{t-1})$ is l.s.c. hence coincides with ${\rm cl}p_t^\xi(s, v_{t-1})$ by Lemma \ref{lem: lsc envelope}. Therefore, we deduce that ${\rm cl}p_t^\xi(S_t, v_{t-1}) = \tilde\gamma_t^\xi(S_t,v_{t-1})$ a.s.. The conclusion  follows.

\end{proof}

 The following result asserts that the SAIP condition and the condition $\inf_{z\in S^{d-1}(0,1)}D_t^0(S_t,0,z)>0$, both with AIP, are actually equivalent.

\begin{theo}\label{SAIP-Char} Assume that Assumption \ref{g} holds. Suppose that either Assumption \ref{CondSuppHemmiCont} holds or the cost functions $\C_t(s,z)$ are convex in $z$. Suppose that the cost functions $\C_t(s,z)$ are l.s.c. in $(s,z)$ and $\C_t(s,z)$ are either super-additive  or  sub-additive, for any $t\le T$.  Then, the following statements are equivalent:

\begin{itemize}

\item [1.)] SAIP. \smallskip

\item [2.)] AIP holds and $\inf_{z\in S^{d-1}(0,1)} D_t^0(S_t,0,z)>0$ a.s..
\end{itemize}

\end{theo}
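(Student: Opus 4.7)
The plan is to prove the equivalence as a cycle, combining a measurable selection argument on the compact sphere $S^{d-1}(0,1)$ with scaling arguments built from Lemma \ref{Delta-homogD} and from the sub- or super-additivity of $\C_t$. A useful preliminary observation is that, under the hypotheses of the theorem, the function $(s,z) \mapsto D_t^0(s,0,z)$ is jointly lower semicontinuous on $\R^m \times \R^d$: the l.s.c.\ of $\theta_t^0(s,\cdot)$ is given by Proposition \ref{Meas+lsc} in the convex case and by Proposition \ref{Propag1} under Assumption \ref{CondSuppHemmiCont}, while $\C_t$ is l.s.c.\ by hypothesis.

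For $(1) \Rightarrow (2)$, AIP is already part of the definition of SAIP, so only the strict infimum bound is at issue. Compactness of $S^{d-1}(0,1)$ together with the l.s.c.\ of $D_t^0(S_t,0,\cdot)$ ensures that $i_t(S_t) := \inf_{z \in S^{d-1}(0,1)} D_t^0(S_t,0,z)$ is attained $\omega$-wise, and a Castaing-type measurable selection then produces $Z_t^{\ast} \in L^0(S^{d-1}(0,1), \cF_t)$ with $D_t^0(S_t,0,Z_t^{\ast}) = i_t(S_t)$. On $\{i_t(S_t)=0\}$ one would have $D_t^0(S_t,0,Z_t^{\ast})=0$ while $(Z_t^{\ast})^{(2)} \neq 0$ (since $|Z_t^{\ast}|=1$ and $(Z_t^{\ast})^1=0$), contradicting SAIP; hence $\bP(i_t(S_t)=0)=0$.

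For $(2) \Rightarrow (1)$, AIP combined with the identity $\gamma_t^0(0) = \essinf_{V_t} D_t^0(S_t,0,V_t) = 0$ gives $D_t^0 \geq 0$ a.s. The direction $Z_t^{(2)} = 0 \Rightarrow D_t^0 = 0$ is immediate: $\C_t(S_t,0)=0$ and $\theta_t^0(S_t,Z_t) = \esssup_{\cF_t}\gamma_{t+1}^0(0) = 0$ by AIP at time $t+1$, using that $\gamma_{t+1}^0$ depends only on the second component. For the converse, I would argue by contradiction: suppose $D_t^0(S_t,0,Z_t) = 0$ with $Z_t^{(2)} \neq 0$ on some event $A$ with $\bP(A)>0$. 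An induction on $s$ via the DPP, using that $\esssup$ and $\essinf$ both preserve sub- (resp.\ super-) additive inequalities, shows that each $\gamma_s^0$ inherits the additivity of $\C_s$, so $D_t^0$ itself is sub- (resp.\ super-) additive. In the sub-additive case this yields $D_t^0(S_t,0,nZ_t) \leq n D_t^0(S_t,0,Z_t) = 0$ for every integer $n \geq 1$; choosing $n$ so that $|n Z_t^{(2)}|\geq 1$ on $A$, Lemma \ref{Delta-homogD} (whose $\delta$-homogeneity hypothesis is automatic from convexity with $\C_t(s,0)=0$) gives $D_t^0(S_t,0,\hat Z_t) \leq D_t^0(S_t,0,nZ_t)/\delta(|nZ_t^{(2)}|) = 0$ for $\hat Z_t := nZ_t/|nZ_t^{(2)}| \in S^{d-1}(0,1)$, contradicting $i_t(S_t) > 0$. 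The super-additive case is symmetric via the scaling $Z_t \mapsto Z_t/n$.

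The main obstacle is the final rescaling step: one-sided additivity supplies only integer scaling in a single direction, so the argument must combine it with the $\delta$-homogeneity of $\C_t$ (from Lemma \ref{Delta-homogD}, automatic in the convex case) in order to transport a vanishing of $D_t^0$ at an arbitrary point back to a genuine unit vector on $S^{d-1}(0,1)$, where the hypothesis $i_t(S_t)>0$ can finally be invoked.
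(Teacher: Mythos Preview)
Your overall strategy matches the paper's: a measurable selection on the compact sphere for $(1)\Rightarrow(2)$ and a scaling argument via Lemma~\ref{Delta-homogD} for $(2)\Rightarrow(1)$. Two points, however, do not go through as written.

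\smallskip

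\textbf{The preliminary observation is circular under Assumption~\ref{CondSuppHemmiCont}.} Propositions~\ref{Meas+lsc} and~\ref{Propag1} only propagate lower semicontinuity from $\gamma_{t+1}^0$ to $\theta_t^0$; they say nothing about why $\gamma_{t+1}^0$ itself should be l.s.c. In the non-convex branch, the only tool for that is Theorem~\ref{PropagTheo}, and Theorem~\ref{PropagTheo} already \emph{requires} $\inf_{z\in S^{d-1}(0,1)}D_{t+1}^0(S_{t+1},0,z)>0$ --- precisely the conclusion you are trying to establish. The paper resolves this by an explicit backward induction: at step $T-1$ the l.s.c.\ of $\gamma_T^0=\C_T(0,-\,\cdot\,)$ is direct, so your selection argument gives the strict infimum bound at $T-1$; Theorem~\ref{PropagTheo} then yields $\gamma_{T-1}^0$ l.s.c., and one proceeds to $T-2$, interleaving the infimum bound and the regularity at each step. (In the convex branch there is no circularity, since Theorem~\ref{Meas+lsc+conv} gives the l.s.c.\ of every $\gamma_t^0$ from AEP alone, and AEP follows from AIP via Lemma~\ref{LemmaAEP-AIP}.)

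\smallskip

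\textbf{The super-additive half of $(2)\Rightarrow(1)$ does not work as stated.} The paper applies Lemma~\ref{Delta-homogD} directly: $0=D_t^0(S_t,0,Z_t)\ge \delta(|Z_t|)\,D_t^0(S_t,0,Z_t/|Z_t|)\ge \delta(|Z_t|)\,i_t(S_t)>0$, with no detour through sub- or super-additivity of $D_t^0$. Your sub-additive route is sound (and has the pleasant side effect of handling $|Z_t|<1$ by first scaling up with $n$), but the claimed symmetry in the super-additive case via $Z_t\mapsto Z_t/n$ fails: super-additivity gives $0=D_t^0(0,Z_t)\ge n\,D_t^0(0,Z_t/n)$, hence $D_t^0(0,Z_t/n)=0$, but $|Z_t/n|\to 0$ and you never land on $S^{d-1}(0,1)$. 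Lemma~\ref{Delta-homogD} requires a scale factor $\ge 1$, so it cannot carry a zero of $D_t^0$ from a short vector \emph{up} to the unit sphere; the integer scaling you have available in the super-additive case only pushes points further inward.
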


\begin{proof}  Let us show that 1.) implies 2.). Suppose first that Assumption \ref{CondSuppHemmiCont} holds.   As $\gamma^{0} _{T}(Z_T)=\C_T(0,-Z_T^{(2)})$ is an $\cF_T$-normal integrand, we deduce by Proposition \ref{Meas+lsc} that $\theta_{T-1}^{0}(Z_{T-1})$ is an $\cF_{T-1}$-normal integrand. Therefore, the function $D_{T-1}^0(S_{T-1},Z_{T-2},Z_{T-1})$ is an $\cF_{T-1}$-normal integrand. Then by lower semicontinuity on the compact set $S^{d-1}(0,1)$  and by a measurable selection argument, there exists $\hat Z_{T-1}\in L^0(\R^d,\cF_{T-1})$ such that $$\inf_{z\in S^{d-1}(0,1)} D_{T-1}^0(S_{T-1},0,z)=D_{T-1}^0(S_{T-1},0,\hat Z_{T-1}).$$ Moreover, $D_{T-1}^0(S_{T-1},0,\hat Z_{T-1})>0$, i.e. $\inf_{z\in S^{d-1}(0,1)} D_{T-1}^0(S_{T-1},0,z)>0$ under SAIP. By Theorem \ref{PropagTheo}, we deduce that $\gamma_{T-1}^0(S_{T-1},Z_{T-2})$ is an $\cF_{T-1}$-normal integrand. By Proposition \ref{Meas+lsc}, we deduce that $\theta_{T-2}^{0}(Z_{T-2})$ is an $\cF_{T-1}$-normal integrand. Therefore, $D_{T-2}^0(S_{T-2},Z_{T-3},Z_{T-2})$ is an $\cF_{T-2}$-normal integrand and, as previously, we deduce that $\inf_{z\in S^{d-1}(0,1)} D_{T-2}^0(S_{T-2},0,z)>0$ under SAIP.  Then, we may proceed by induction by virtue of Theorem \ref{PropagTheo} and Proposition \ref{Meas+lsc}.  \smallskip

At last, if the cost functions are convex, recall that AEP holds by Lemma \ref{LemmaAEP-AIP}. Then,  it suffices to apply Theorem \ref{Meas+lsc+conv} and  Proposition \ref{Meas+lsc}  to deduce that for fixed $S_t\in L^0(\R^d,\cF_t)$, $D_t^0(S_t,0,z)$ is an $\cF_t$-normal integrand as a function of $z$  so that we may conclude similarly. \smallskip

 Let us show that 2.) implies 1.) Suppose that  $D_t^0(S_t,0,Z_t)=0$ for some $Z_t\in L^0(\R^d\setminus\{0\},\cF_t)$. By Lemma \ref{Delta-homogD}, 
 \bean D_t^0(S_t,0,Z_t)\ge \delta(|Z_t|)D_t^0(S_t,0,Z_t/|Z_t|)\ge \delta(|Z_t|)\inf_{z\in S^{d-1}(0,1)} D_t^0(S_t,0,z)>0.\eean
This yields a contradiction hence the conclusion follows under Assumption  \ref{CondSuppHemmiCont}. 
\end{proof}

We then conclude that, under SAIP, the dynamic programming principle allows to compute $\tilde \gamma_t^\xi$ backwardly so that it is possible to deduce the minimal hedging price $c_0(\xi)=\gamma_0^\xi(0)$.

\begin{theo} \label{LSC+} Assume that Assumption \ref{g} and Assumption \ref{CondSuppHemmiCont} hold. Suppose that the cost functions are normal integrands  and  either super-additive  of  sub-additive. Then, under the condition SAIP, there exists an $\cF_t$-normal integrand $\tilde \gamma_t^\xi$ defined on $\Omega\times\R^m\times\R^m$ such that, for all $V_{t-1}\in L^0(\R^d,\cF_{t-1})$, we have $\gamma_t^\xi(V_{t-1})=\tilde \gamma_t^\xi(S_t,V_{t-1})$. Moreover, the dynamic programming principle (\ref{DPP1}) is computable $\omega$-wise as: 
\bean
\gamma_t^\xi(S_t,V_{t-1})=\inf_{y\in \R}\left(\C_t(S_t,(0,y^{(2)}-V_{t-1}^{(2)}))+\sup_{s\in \phi_t(S_t)}\gamma_{t+1}^\xi(s,y) \right),
\eean
where $\phi_t(S_t)={\rm supp}_{\cF_t}S_{t+1}$. Also, the infimum hedging cost of $\xi$ at any time $t$ is reached, i.e. $\gamma_t^\xi(V_{t-1})$ is a mimimal cost.

\end{theo}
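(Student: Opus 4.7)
The plan is backward induction on $t\in\{0,\ldots,T\}$ with Theorem \ref{PropagTheo} as the propagation engine, Theorem \ref{SAIP-Char} used to convert SAIP into the nondegeneracy condition $\inf_{z\in S^{d-1}(0,1)}D_t^0(S_t,0,z)>0$ a.s.\ required by the former, and Proposition \ref{Propag1} together with Lemma \ref{ess omega-wise lemm} used to rewrite the $\esssup$ and $\essinf$ appearing in the dynamic programming principle (Theorem \ref{DPP}) as pointwise operations.

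For the base case at $t=T$, Assumption \ref{g} gives $\xi=g(S_T)$ with $g$ continuous, so that
$$\tilde\gamma_T^\xi(s,v):=g^1(s)+\C_T\!\left(s,\bigl(0,g^{(2)}(s)-v^{(2)}\bigr)\right)$$
is an $\cF_T$-normal integrand (since $\C_T$ is one and $g$ is continuous) and represents $\gamma_T^\xi$. Inductively, assume $\tilde\gamma_{t+1}^\xi$ is an $\cF_{t+1}$-normal integrand representing $\gamma_{t+1}^\xi$. All hypotheses of Theorem \ref{PropagTheo} are then in force: Assumptions \ref{g} and \ref{CondSuppHemmiCont} by assumption, $\C_t$ a super- or sub-additive normal integrand by assumption, and the nondegeneracy $\inf_{z\in S^{d-1}(0,1)}D_t^0(S_t,0,z)>0$ by Theorem \ref{SAIP-Char} under SAIP. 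Hence $\gamma_t^\xi(V_{t-1})=\tilde\gamma_t^\xi(S_t,V_{t-1})$ with $\tilde\gamma_t^\xi$ an $\cF_t$-normal integrand, closing the induction.

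For the pointwise formula, Proposition \ref{Propag1} (via Assumption \ref{CondSuppHemmiCont}) gives $\tilde\theta_t^\xi(s,v)=\sup_{s'\in\phi_t(s)}\tilde\gamma_{t+1}^\xi(s',v)$ with $\phi_t(S_t)=\mathrm{supp}_{\cF_t}S_{t+1}$, while Lemma \ref{ess omega-wise lemm} converts the $\essinf$ in Theorem \ref{DPP} into a pointwise infimum over $y\in\R^d$, producing the formula of the statement. For attainment, the proof of Theorem \ref{PropagTheo} already restricts the minimization to the compact-valued multifunction $\tilde\phi_t(S_t,V_{t-1})=\bar B(0,r_t(S_t,V_{t-1})+1)$ on which $D_t^\xi(S_t,V_{t-1},\cdot)$ is a l.s.c.\ normal integrand; a classical measurable selection argument yields an $\cF_t$-measurable minimizer $V_t^\ast$. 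Iterating this selection forward from $t$ up to $T$ with $V_T^\ast=\xi$ produces a portfolio process in $\cR_t(\xi)$ realizing the cost $\tilde\gamma_t^\xi(S_t,V_{t-1})$, so the infimum is in fact a minimum.

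The main obstacle is that Theorem \ref{PropagTheo} only yields lower semicontinuity of $\tilde\gamma_t^\xi$ on the open set $\cO_t=\{i_t>0\}$ rather than on all of $\R^m\times\R^d$. To propagate the normal integrand property cleanly one must work through the l.s.c.\ envelope $\mathrm{cl}\,p_t^\xi$, which that proof shows to be an $\cF_t$-normal integrand, and verify that under SAIP the realized pair $(S_t(\omega),V_{t-1}(\omega))$ lies in $\cO_t$ almost surely, so that the envelope agrees with $\tilde\gamma_t^\xi$ along the trajectory of the underlying. This a.s.\ identification is what makes the normal integrand property persist backward through the recursion and makes the measurable-selection step for attainment legitimate at every time $t$.
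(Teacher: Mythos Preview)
Your proposal is correct and matches the paper's intended approach. In fact the paper gives no separate proof for this theorem: it is stated immediately after Theorem \ref{SAIP-Char} as a synthesis of the preceding results, with only the one-line remark ``We then conclude that, under SAIP, the dynamic programming principle allows to compute $\tilde\gamma_t^\xi$ backwardly\ldots''. Your backward induction, invoking Theorem \ref{PropagTheo} at each step with the nondegeneracy supplied by Theorem \ref{SAIP-Char}, the pointwise rewriting via Proposition \ref{Propag1} and Lemma \ref{ess omega-wise lemm}, and attainment via Corollary \ref{measurability-compact} on the compact set $\tilde\phi_t(S_t,V_{t-1})$, is exactly the assembly the paper has in mind. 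Your closing paragraph on the $\cO_t$/l.s.c.-envelope subtlety is also accurate and tracks the argument inside the proof of Theorem \ref{PropagTheo}.

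One minor point worth flagging: Theorem \ref{PropagTheo} requires $\C_t$ to be positively super $\delta$-homogeneous, an assumption not restated in Theorem \ref{LSC+} but used throughout Section 4.2 (and implicitly in Theorem \ref{SAIP-Char} via Lemma \ref{Delta-homogD}); your proof tacitly relies on it as the paper does.
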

The following proposition shows that the classical Robust No Arbitrage $\text{NA}^{\rm r}$ (\cite[Chapter 3 ]{KS}) used to characterize the super hedging prices in the Kabanov model with proportional transaction costs is stronger than the SAIP condition.

\begin{prop}
Suppose that $\rm int\,\G_t^*\neq\emptyset$ for any $t\leq T$. Then, $\rm NA^{\rm r}$ implies SAIP.
\end{prop}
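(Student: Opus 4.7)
My plan is to extract a strictly consistent price system (SCPS) from $\rm NA^{\rm r}$ and use it to derive both the AIP condition and the strict implication $D_t^0(S_t,0,Z_t)=0\Rightarrow Z_t^{(2)}=0$ by a single backward induction. The hypothesis ${\rm int}\,\G_t^*\neq\emptyset$ is essential: in finite dimension it forces $\G_t$ to be a pointed closed convex cone, and it supplies the characterization ${\rm int}\,\G_t^*=\{y\in\R^d:y\cdot x>0,\ \forall x\in\G_t\setminus\{0\}\}$ that turns a zero duality pairing into a null vector. By \cite[Chapter~3]{KS}, $\rm NA^{\rm r}$ yields a martingale $(Y_s)_{s=0}^T$ with $Y_s\in{\rm int}\,\G_s^*$ a.s., normalised so that $Y_s^1=1$; pairing $Y_s\in\G_s^*$ with the defining inclusion $\C_s(0,z)e_1-(0,z)\in\G_s$ produces the fundamental pointwise bound $\C_s(0,z)\ge Y_s^{(2)}\cdot z$ for every $z\in\R^{d-1}$.

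The core of the proof is a backward induction on $s$ establishing $\gamma_s^0(V_{s-1})\ge -Y_s^{(2)}\cdot V_{s-1}^{(2)}$ a.s. for every $V_{s-1}\in L^0(\R^d,\cF_{s-1})$. The base $s=T$ is immediate from $\gamma_T^0(V_{T-1})=\C_T(0,-V_{T-1}^{(2)})$ together with the pointwise bound above. For the induction step I exploit that $Y_{s-1}^{(2)}\cdot V_{s-1}^{(2)}$ is $\cF_{s-1}$-measurable, so it factors additively through the conditional ess-sup; combined with the inductive bound,
\begin{align*}
\theta_{s-1}^0(V_{s-1})+Y_{s-1}^{(2)}\cdot V_{s-1}^{(2)}
&= \esssup_{\cF_{s-1}}\bigl(\gamma_s^0(V_{s-1})+Y_{s-1}^{(2)}\cdot V_{s-1}^{(2)}\bigr) \\
&\ge \esssup_{\cF_{s-1}}\bigl(-\Delta Y_s^{(2)}\cdot V_{s-1}^{(2)}\bigr)\ \ge\ 0,
\end{align*}
where the last inequality uses $\esssup_{\cF_{s-1}}\ge \E[\cdot\mid\cF_{s-1}]$ and the martingale property of $Y$. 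Adding the pointwise bound applied to the increment $V_{s-1}^{(2)}-V_{s-2}^{(2)}$ yields $D_{s-1}^0(V_{s-2},V_{s-1})\ge -Y_{s-1}^{(2)}\cdot V_{s-2}^{(2)}$, and taking $\essinf_{V_{s-1}}$ in the DPP (\ref{DPP1}) closes the induction. Setting $V_{s-1}=0$ gives $\gamma_s^0(0)\ge 0$; the trivial strategy yields the reverse inequality, so AIP holds at every time.

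For the strict part of SAIP, suppose $D_t^0(S_t,0,Z_t)=0$ for some $Z_t\in L^0(\R^d,\cF_t)$. Adding the bounds $\C_t(0,Z_t^{(2)})\ge Y_t^{(2)}\cdot Z_t^{(2)}$ and $\theta_t^0(Z_t)\ge -Y_t^{(2)}\cdot Z_t^{(2)}$ already shows $D_t^0(S_t,0,Z_t)\ge 0$; equality therefore forces both to be tight. In particular, the vector $x:=(\C_t(0,Z_t^{(2)}),-Z_t^{(2)})\in\G_t$ satisfies $Y_t\cdot x=0$, and since $Y_t\in{\rm int}\,\G_t^*$ with $\G_t$ pointed, the only such element is $x=0$. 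Hence $Z_t^{(2)}=0$ a.s., which is SAIP.

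The main obstacle is the conditional ess-sup manipulation in the inductive step: because $V_{s-1}^{(2)}$ is signed one cannot directly bound $\esssup_{\cF_{s-1}}(-Y_s^{(2)}\cdot V_{s-1}^{(2)})$ componentwise. The fix above absorbs $Y_{s-1}^{(2)}\cdot V_{s-1}^{(2)}$ additively inside the ess-sup via its $\cF_{s-1}$-measurability, reducing the estimate to $\esssup_{\cF_{s-1}}(-\Delta Y_s^{(2)}\cdot V_{s-1}^{(2)})\ge 0$, which then follows from the martingale property of $Y$. The required conditional expectations are a.s.\ finite because $Y$ is integrable and $V_{s-1}\in L^0$; should full integrability fail, a standard localisation of the SCPS restores it.
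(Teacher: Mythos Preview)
Your proof is correct and takes a genuinely different route from the paper's. A few points should be tightened. First, the normalisation $Y_s^1=1$ is not compatible with $Y$ being a $P$-martingale: what \cite{KS} supplies is a $P$-martingale $K$ with $K_s\in{\rm int}\,\G_s^*$, and $Y_s:=K_s/K_s^1$ is then a $Q$-martingale for $dQ/dP\propto K_T^1$. This is harmless because the conditional essential supremum is invariant under equivalent changes of measure, but the change of measure should be stated. Second, the side remark that ${\rm int}\,\G_t^*\neq\emptyset$ forces $\G_t$ to be a pointed closed convex cone is false in the paper's general (non-conic) setting; fortunately your argument only uses the valid implication $Y_t\in{\rm int}\,\G_t^*$, $x\in\G_t$, $Y_t\cdot x=0\Rightarrow x=0$, which follows directly from the definition of the interior of the dual, so this is cosmetic. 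Third, the integrability fix should be a truncation of $V_{s-1}$ (replace $V_{s-1}^{(2)}$ by $V_{s-1}^{(2)}1_{|V_{s-1}|\le n}$ and let $n\to\infty$), not a localisation of the SCPS.

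As to the comparison: the paper works with the unnormalised $P$-martingale $K$ and, by $\epsilon$-approximating the essential infimum forward at each step, exhibits elements $g_s^\epsilon\in\G_s$ such that $(D_t^0(0,Z_t)+\epsilon T)e_1=g_t+\sum_{s>t}g_s^\epsilon$; pairing with $K_T$ and iterated conditional expectations yield $K_t^1 D_t^0(0,Z_t)\ge K_t\cdot g_t$, and SAIP follows since $K_t\in{\rm int}\,\G_t^*$ and $g_t=\C_t((0,Z_t^{(2)}))e_1-(0,Z_t^{(2)})\neq0$ whenever $Z_t^{(2)}\neq0$. You instead propagate the explicit lower bound $\gamma_s^0(V)\ge -Y_s^{(2)}\cdot V^{(2)}$ backward through the DPP via the dual inequality $\C_s(0,z)\ge Y_s^{(2)}\cdot z$, and read off both AIP and the strict part from equality in that chain. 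Your route is cleaner in that it avoids the $\epsilon$-construction and gives a quantitative bound on $\gamma_s^0$ along the way; the paper's route avoids any measure change and stays closer to the set-valued structure $\sum\G_s$.
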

\begin{proof} Recall that  $\rm NA^{\rm r}$ is equivalent to the existence of a martingale $(K_s)_{s\leq T}$ such that $K_s\in \rm int\, \G_s^*$, \cite[Theorem 3.2.1]{KS}. 
Consider $Z_{T-1}\in L^0(\R^d,\cF_{T-1})$. As $D_{T-1}(0,Z_{T-1})=D_{T-1}(0,(0,Z_{T-1}^{(2)}))$, we may suppose that $Z_{T-1}= (0,Z_{T-1}^{(2)})$.  By the definition of $\C_{u}$, there exists $\tilde g_{u}\in L^0(\G_{u},\cF_{u})$, $u=T-1,T$, such that:
\begin{align*}
    &\C_{T-1}((0,Z_{T-1}^{(2)}))e^1 -  g_{T-1} = (0,Z_{T-1}^{(2)})\\
    &\C_T((0,-Z_{T-1}^{(2)}))e^1 - \tilde g_T = (0,-Z_{T-1}^{(2)}).
\end{align*}
Adding these equalities, we get that $D_{T-1}(0,Z_{T-1})e^1=g_{T-1} +g_T$ for some $g_{T}\in L^0(\G_{T},\cF_{T})$, see (\ref{D1}).  So, we get that  $K_T D_{T-1}(0,Z_{T-1})e^1\ge  K_T  g_{T-1}$ and, taking the generalized conditional expectation w.r.t $\cF_{T-1}$, we deduce that $ K_{T-1}D_{T-1}(0,Z_{T-1})e^1 \geq K_{T-1}g_{T-1}\geq 0.$
Since $ K_{T-1}e^1=K_{T-1}^1>0$,  AIP holds at time $T-1$. Moreover, $g_{T-1}\neq 0$ a.s. as soon as $Z_{T-1}^{(2)}\neq 0$. Since $K_{T-1}\in {\rm int }\, \G_{T-1}^*$, we finally deduce that
\begin{align*}
    K_{T-1}D_{T-1}^0(S_t,0,Z_{T-1})e^1 \geq K_{T-1}g_{T-1} > 0
\end{align*}
as soon as $Z_{T-1}^{(2)}\neq 0$, which means that SAIP holds at time $T-1$. \smallskip

Suppose that we have already shown SAIP for $s\geq t+1$. For a given $Z_t\in L^0(\R^d,\cF_t)$, we consider  $g_t\in L^0(\G_t,\cF_t)$ such that 
\begin{align}\label{eq: C_t g_t}
    \C_t((0,Z_t^{(2)}))e^1 - g_t = (0,Z_t^{(2)}).
\end{align}
Since AIP holds at time $t+1$, by Lemma \ref{LemmaAEP-AIP}, we have $\gamma_{t+1}(Z_t)> -\infty$ under AEP. Since the family $\{D_{t+1}^0(Z_t,Z_{t+1}),Z_{t+1}\in L^0(\R^d,\cF_{t+1})\}$ is directed downward, we deduce the existence of a sequence $Z^n_{t+1}\in L^0(\R^d,\cF_{t+1}),\ n\in\N$ such that
\begin{align*}
    \gamma_{t+1}^0(Z_t) =\essinf_{Z_{t+1}\in L^0(\R^d,\cF_{t+1})}D_{t+1}^0(Z_t,Z_{t+1})= \inf_n D_{t+1}^0(Z_t,Z^n_{t+1}) > -\infty\text{ a.s.}.
\end{align*}

We deduce that, for any $\epsilon > 0$, there exists $Z^{\epsilon}_{t+1}\in L^0(\R^d,\cF_{t+1})$ such that $\gamma_{t+1}^0(Z_t) + \epsilon \geq D_{t+1}^0(Z_t,Z^{\epsilon}_{t+1})$. Proceeding forward with  the induction hypothesis, we construct a sequence $g_{s}^\epsilon\in L^0(\G_s,\cF_s), s\geq t+1$, such that
\begin{align*}
    &(D_t^0(0,Z_t) + \epsilon T)e^1= g_t+ \sum_{s=t+1}^Tg_s^\epsilon.
 \end{align*}
 Therefore, multiplying by $K_T\in \G_T^*$ and then taking the (generalized) conditional expectation knowing $\cF_{T-1}$, we get that
\begin{align*}
    &K_T(D_t^0(0,Z_t) + \epsilon T)e^1\geq\ K_T\left( g_t+\sum_{s=t+1}^{T-1}g_s^\epsilon\right),\\
    &K_{T-1}(D_t^0(0,Z_t) + \epsilon T)e^1 \geq K_{T-1}\left( g_t+\sum_{s=t+1}^{T-1}g_s^\epsilon\right).
\end{align*}

By successive iterations, we finally get that $K_{t}(D_t^0(0,Z_t) + \epsilon T)e^1 \geq K_tg_t$. Since $g_t$ does not depend on $\epsilon$, see its definition in (\ref{eq: C_t g_t}), we deduce as $\epsilon\to 0$, that $K_tD_t^0(0,Z_t)e^1 \geq K_tg_t\ge 0$ and $K_tD_t^0(0,Z_t)e^1>0$ if $g_t\ne 0$ when $Z_t^{(2)}\ne 0$. Therefore, SAIP holds at time $t$ and we may conclude.
\end{proof}

\subsection{The case of fixed transaction costs}\label{FC}
In the case of fixed costs, the cost functions $\C_t$, $t\le T$, are not convex in general. Moreover, $\C_t$ is a priori positively lower homogeneous, i.e. for any $\lambda\ge 1$,  $\C_t(\lambda z)\le \lambda \C_t(z)$. Then,   $\C_t$ does not satisfy the assumptions we impose in this paper. Nevertheless, we shall see in this section that we may also implement the dynamic programming principle under a robust SAIP condition imposed on the enlarged market with only proportional transaction costs.

To do so, recall that for  a l.s.c. function $g$,  the horizon function (see \cite[Section 3.C]{RW}) $g^\infty$ of $g$ is defined as:
\begin{align*}
    g^\infty(y):= \liminf_{\alpha\to\infty}\dfrac{g(\alpha y)}{\alpha}.
\end{align*}
Recall  that $g^\infty$ is positively homogeneous and l.s.c. in $y$.  We then define the \textit{horizon} cost function as 
\bea \label{Cinfty}
    \hat \C_t(s,y)=\C_t^\infty(s,y) = \liminf_{\alpha\to\infty}\dfrac{\C_t(s,\alpha y)}{\alpha}.
\eea

The liquidation value associated to the cost function $\hat \C_t$ is then given by 

\begin{align*}
    \hat \L_t(s,y)= \limsup_{\alpha\to\infty}\dfrac{\L_t(s,\alpha y)}{\alpha}.
\end{align*}

Note that in the case where $\hat \C_t(s,y)= \lim_{\alpha\to\infty}\dfrac{\C_t(s,\alpha y)}{\alpha}$, then    $\hat \L_t= \L_t^{\infty}$. Moreover, if $\hat \C_t$ is subadditive, we deduce that 
$$\hat \G_t(\omega):=\{z:~\hat \L_t(S_t(\omega),z)\ge 0\}$$ is an $\cF_t$-measurable random positive closed cone. We then deduce that the enlarged market defined by the solvency sets $(\hat \G_t)_{t\in [0,T]}$ corresponds to a model with proportional transaction costs, as defined in \cite{KS}[Section 3]. The cash invariance property propagates from $\C_t$ to $\hat \C_t$. In that case, we may verify that $\hat \L_t(s,z)=\max\{ \alpha \in \R:~z-\alpha e_1 \in \hat \G_t\}$ and similarly, we have $\hat \C_t(s,z)=\min\{ \alpha \in \R:~\alpha e_1-z \in \hat \G_t\}$. We then deduce the following:

\begin{lemm} Suppose that $\C_t$ is cash invariant. Then, $\G_t\subseteq \hat \G_t$ if and only if  $\hat \C_t(S_t,z)\le \C_t(S_t,z)$ for any $z$ a.s..
\end{lemm}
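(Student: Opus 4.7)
The plan is to reduce the set-theoretic inclusion $\G_t\subseteq \hat\G_t$ to an equivalent functional inequality via the cash-invariance characterization. The key preliminary observation is that cash invariance propagates from $\C_t$ to $\hat\C_t$: directly from the definition (\ref{Cinfty}),
\begin{align*}
\hat\C_t(s,y+\lambda e_1)=\liminf_{\alpha\to\infty}\frac{\C_t(s,\alpha y)+\alpha\lambda}{\alpha}=\hat\C_t(s,y)+\lambda,
\end{align*}
and similarly $\hat\L_t(s,y+\lambda e_1)=\hat\L_t(s,y)+\lambda$, with $\hat\L_t(s,z)=-\hat\C_t(s,-z)$ following from $\L_t(s,\alpha z)=-\C_t(s,-\alpha z)$ and the interchange $\limsup(-f_\alpha)=-\liminf f_\alpha$. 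Consequently we have the equivalent descriptions
\begin{align*}
\G_t=\{z:\C_t(S_t,-z)\le 0\},\qquad \hat\G_t=\{z:\hat\C_t(S_t,-z)\le 0\},
\end{align*}
using the fact that $\C_t(S_t,z)e_1-z\in\G_t$ combined with $\G_t+\R^d_+\subseteq\G_t$ (and the analogous facts for $\hat\G_t$).

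For the direction ``$\hat\C_t\le\C_t$ implies $\G_t\subseteq\hat\G_t$'' I would take $z\in\G_t$, deduce $\C_t(S_t,-z)\le 0$ from the characterization above, and then immediately conclude $\hat\C_t(S_t,-z)\le\C_t(S_t,-z)\le 0$, giving $z\in\hat\G_t$.

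For the reverse direction, I would use that $\C_t(S_t,z)e_1-z\in\G_t\subseteq\hat\G_t$, so by the $\hat\L_t$-characterization of $\hat\G_t$ we have $\hat\L_t(S_t,\C_t(S_t,z)e_1-z)\ge 0$. Applying cash invariance of $\hat\L_t$ and the identity $\hat\L_t(S_t,-z)=-\hat\C_t(S_t,z)$ gives
\begin{align*}
\C_t(S_t,z)-\hat\C_t(S_t,z)=\hat\L_t(S_t,\C_t(S_t,z)e_1-z)\ge 0,
\end{align*}
which is the desired inequality.

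The proof is essentially bookkeeping of definitions; the only mildly delicate point is ensuring the $\liminf/\limsup$ asymmetry in the definitions of $\hat\C_t$ and $\hat\L_t$ correctly yields the duality $\hat\L_t(s,z)=-\hat\C_t(s,-z)$ together with cash invariance, so that the characterizations of $\G_t$ and $\hat\G_t$ via their respective cost functions are symmetric. Once these are established, neither direction presents any real obstacle.
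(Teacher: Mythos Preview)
Your proof is correct and follows essentially the same route as the paper: for the implication $\G_t\subseteq\hat\G_t\Rightarrow\hat\C_t\le\C_t$ both arguments start from $\C_t(S_t,z)e_1-z\in\G_t\subseteq\hat\G_t$ and read off the inequality (the paper via $\hat\C_t(s,z)=\min\{\alpha:\alpha e_1-z\in\hat\G_t\}$, you via cash invariance of $\hat\L_t$, which amounts to the same thing); for the converse both pass through $\hat\L_t\ge\L_t$ and the description $\G_t=\{z:\L_t(z)\ge 0\}$. Your explicit verification that cash invariance and the duality $\hat\L_t=-\hat\C_t(\cdot,-\cdot)$ survive the $\liminf/\limsup$ passage is a nice touch that the paper only sketches in the paragraph preceding the lemma.
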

\begin{proof}  First suppose that $\G_t\subseteq \hat \G_t$. As $\C_t(S_t,z)e_1-z \in \G_t$, then we get that  $\C_t(S_t,z)e_1-z \in \hat \G_t$. Therefore, we deduce that 
$$\hat \C_t(s,z)=\min\{ \alpha \in \R:~\alpha e_1-z \in \hat \G_t\}\le \C_t(S_t,z).$$
Reciprocally, if $\hat \C_t\le \C_t$, then $\hat \L_t\ge \L_t$ hence $\G_t\subseteq \hat \G_t$.
\end{proof}

Note that in \cite{LT}, such an enlarged model $(\hat \G_t)_{t\in [0,T]}$ is studied and  $\hat \L_t$ is the liquidation value of the closed conic hull ${\rm K}_t$ of $\G_t$, i.e. $\hat \G_t={\rm K}_t$.

\begin{ex}\label{FCM} {\rm The market is composed of one bond whose price is $B_t=1$ and $d-1$ risky assets,  $d\ge 2$, whose prices are described by a family of bid and ask prices and fixed costs $S=((S^{b,i},S^{a,i},c^i))_{i=2,\cdots,d}$. In the following, we denote by $s=((s^{b,i},s^{a,i},c^i))_{i=2,\cdots,d}$ any element of $\R^{3(d-1)}$. We consider the  fixed costs model defined by the following liquidation process:
\bean 
    \L_t(s,y) &:=& y^1 + \sum_{i=2}^d \L_t^i(s^{b,i},s^{a,i},c^i,y^i),\, (s,y) \in \R^{3(d-1)}\times \R^d,\\
    \L_t^i(s^{b,i},s^{a,i},c^i,y^i)&:=& \left(y^is^{b,i} - c_t^i\right)^+1_{y^i >  0} +\left(y^is^{a,i} - c_t^i\right)1_{y^i<0}.
 \eean
 Note that the $(c^i)_{i=2,\cdots,d}$ are interpreted as fixed costs while $(s^{b,i},s^{a,i})_{i=2,\cdots,d}$ are bid and ask prices for the risky assets. We may of course generalize  this model to an order book with several bid and ask prices for each asset, as in Example \ref{OB}. Recall that by definition  $\C_t(s,y)= -\L_t(s,-y)$ and we may verify that $\C_t(s,y)$ is l.s.c. in every $(s,y)$ such that $(c^i)_{i=2,\cdots,d}\in \R_+^{d-1}$. To see it, it suffices to observe that  $\L_t^i(s,y)$ is continuous at each point  $(s,y)$ such that $y\ne 0$. At last, if $y=0$, $\L_t(s,y)=0$ and $\liminf_{r\to s,y\to 0}\L_t(r,y)\le 0$ since $c_t^i\ge 0$. Therefore,  $\L_t^i$ is u.s.c. Moreover,   $\C_t(s,y)$ is subadditive in $y$.  A direct computation yields that $\hat \L_t(s,y) =y^1 + \sum_{i=2}^d \hat \L_t^i(s^{b,i},s^{a,i},y^i)$ where
$$ \hat \L_t^i(s^{b,i},s^{a,i},y^i)= (y^i)^+s^{b,i}- (y^i)^-s^{a,i}.$$

Note that $\hat \L_t=\L_t^{\infty}$ and we have  $\hat \C_t(s,y) =y^1 + \sum_{i=2}^d \hat \C_t^i(s^{b,i},s^{a,i},y^i)$ where 
$$\hat \C_t^i(s^{b,i},s^{a,i},y^i)=(y^i) ^+s^{a,i}-(y^i)^-s^{b,i}.$$
Observe that $\hat \L_t$ and $\hat \C_t$ are continuous in $(s,y)$. Moreover, $\hat \C_t\le \C_t$ and $\hat \C_t$ is  super $\delta$-homogeneous with $\delta(x)=x$.} $\triangle$
\end{ex}

In the following, we adapt the notations of Section \ref{DPPP} to the enlarged model $(\hat \G_t)_{t\in [0,T]}$ as follows:
We set 
$$\hat \gamma_T(S_T,V_{T-1}) = g^1(S_T) + \hat \C_T(S_T,(0,g^{(2)}(S_T)-V_{T-1}^{(2)})),$$ and we  define recursively
\bean
\hat\theta_{t}^\xi(V_{t})&:=& \esssup_{\cF_{t}}\hat\gamma_{t+1}^\xi(V_{t}),\\
\hat D_t^\xi(S_t,V_{t-1},V_{t})&:=& \hat \C_{t}(S_t,(0,V_{t}^{(2)}- V_{t-1}^{(2)})) + \hat\theta_{t}^\xi(S_t,V_{t}).
\eean
% \begin{defi}
% We say that the robust no-arbitrage condition RSAIP holds at time $t$ if the SAIP condition holds at time $t$ for the enlarged model $(\hat \G_t)_{t\in [0,T]}$.  We say that RSAIP holds if it holds at any time. 
% \end{defi}

\begin{theo} \label{PropagTheoHorizon} Suppose that the enlarged market satisfies $\hat \C_t\le \C_t$,  $\hat \C$ is super $\delta$-homogeneous and is either sub-additive or super-additive. Suppose that  there exists an $\cF_{t+1}$-normal integrand $\tilde \gamma_{t+1}^{\xi}$ defined on $\R^m\times \R^d$ such that   $\gamma_{t+1}^{\xi}(V_{t})=\tilde \gamma_{t+1}^{\xi}(S_{t+1},V_t)$ for all $V_t\in L^0(\R^d,\cF_t)$. Assume that Assumption \ref{g} and Assumption \ref{CondSuppHemmiCont} hold. Suppose that the cost function $\C_t(s,z)$ is an $\cF_t$-normal integrand and $\C_t$ is either super-additive  or  sub-additive. Then, if $\inf_{z\in S^{d-1}(0,1)}\hat D_t^0(S_t,0,z)>0$,  $\gamma_{t}^{\xi}(V_{t-1})=\tilde \gamma_{t}^{\xi}(S_{t},V_{t-1})$ where $\tilde \gamma_{t}^{\xi}(s,v_{t-1})$ is an $\cF_t$-normal integrand.
\end{theo}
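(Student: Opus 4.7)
The plan is to adapt the proof of Theorem \ref{PropagTheo} by replacing the super $\delta$-homogeneity of $\C_t$, which fails for fixed-cost models, by that of the horizon cost $\hat\C_t$ on the enlarged conic market, combined with the pointwise comparison $\hat D_t^0 \le D_t^0$. Following the pattern of Theorem \ref{PropagTheo}, Proposition \ref{Propag1} under Assumption \ref{CondSuppHemmiCont} first yields $\theta_t^\xi(V_t) = \tilde\theta_t^\xi(S_t, V_t)$ with $\tilde\theta_t^\xi(s,v) = \sup_{z \in \phi_t(s)} \tilde\gamma_{t+1}^\xi(z,v)$, which is l.s.c. jointly in $(s,v)$ by \cite[Lemma 17.29]{AB}. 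Since $\C_t$ is a normal integrand, $D_t^\xi(s,v_{t-1},v_t) = \C_t(s,(0,v_t^{(2)} - v_{t-1}^{(2)})) + \tilde\theta_t^\xi(s,v_t)$ is an $\cF_t$-normal integrand, and Lemma \ref{ess omega-wise lemm} gives $\gamma_t^\xi(V_{t-1}) = \tilde\gamma_t^\xi(S_t, V_{t-1})$ with $\tilde\gamma_t^\xi(s,v_{t-1}) = \inf_{v_t \in \R^d} D_t^\xi(s,v_{t-1},v_t)$.

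The crux is the compact-support reduction. Using $\gamma_t^\xi \le D_t^\xi(s,v_{t-1},0)$, I restrict the infimum to $K_t(s,v_{t-1}) := \{v_t : D_t^\xi(s,v_{t-1},v_t) \le D_t^\xi(s,v_{t-1},0)\}$. Monotonicity of $\C_T$ gives $D_t^\xi \ge D_t^0$, and sub- or super-additivity of $\C_t$ yields
$$D_t^0(s,v_{t-1},v_t) \ge \tilde h_t(s,v_{t-1}) + D_t^0(s,0,v_t)$$
for some continuous $\tilde h_t$ built from $h_t$. The new ingredient is the pointwise inequality $D_t^0 \ge \hat D_t^0$: since $\hat\C_s \le \C_s$ for every $s$, the multistep characterisation of $\gamma_s^0$ from Section \ref{DPPP} yields $\hat\gamma_{t+1}^0 \le \gamma_{t+1}^0$, hence $\hat\theta_t^0 \le \theta_t^0$, which combined with $\hat\C_t \le \C_t$ gives $\hat D_t^0 \le D_t^0$. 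Applying Lemma \ref{Delta-homogD} on the enlarged conic market — in which $\hat\C$ is super $\delta$-homogeneous — yields super $\delta$-homogeneity of $\hat D_t^0(s,0,\cdot)$, so for $|v_t| \ge 1$,
$$D_t^0(s,0,v_t) \ge \hat D_t^0(s,0,v_t) \ge \delta(|v_t|) \inf_{z \in S^{d-1}(0,1)} \hat D_t^0(s,0,z).$$

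By hypothesis $i_t(s) := \inf_{z \in S^{d-1}(0,1)} \hat D_t^0(s,0,z)$ satisfies $i_t(S_t) > 0$ a.s., and $i_t$ is l.s.c. by Proposition \ref{prop:lsc value} applied to the constant compact correspondence $s \mapsto S^{d-1}(0,1)$. Combined with a continuous upper bound $\hat h_t^\xi(s,v_{t-1}) \ge |D_t^\xi(s,v_{t-1},0)|$ inherited from $h_t$ as in Lemma \ref{ContinuousBoundD}, this gives $K_t(s,v_{t-1}) \subseteq \bar B(0, r_t(s,v_{t-1}) + 1)$ where $r_t := \delta^{-1}((|\tilde h_t| + \hat h_t^\xi)/i_t)$ is u.s.c. on the open set $\cO_t := \{(s,v_{t-1}) : i_t(s) > 0\}$. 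Hence $\tilde\phi_t(s,v_{t-1}) := \bar B(0, r_t(s,v_{t-1}) + 1)$ is upper hemicontinuous with compact values on $\cO_t$ by Lemma \ref{lem: uppercontinuous ball}, Proposition \ref{prop:lsc value} gives lower semicontinuity of $\tilde\gamma_t^\xi$ on $\cO_t$, and the $\cF_t$-normal integrand property follows as in the last paragraph of Theorem \ref{PropagTheo}: pass to the l.s.c. envelope of $p_t^\xi(s,v_{t-1}) := \inf_{v_t} D_t^\xi(s,v_{t-1},v_t)$ via \cite[Theorem 14.47]{RW} and Lemma \ref{lem: lsc envelope}, using that $(S_t,v_{t-1}) \in \cO_t$ a.s. under the standing hypothesis $i_t(S_t) > 0$.

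The main obstacle is the two-pronged analysis on the enlarged market: the monotone comparison $\hat\gamma^0 \le \gamma^0$ (immediate from the multistep formula once $\hat\C \le \C$), and more substantially the super $\delta$-homogeneity of $\hat D_t^0(s,0,\cdot)$, which requires Lemma \ref{Delta-homogD} to lift the single-step inequality on $\hat\C$ through the recursion defining $\hat\theta_t^0$. Once both are in hand, the remainder mirrors Theorem \ref{PropagTheo} with $\hat D_t^0$ playing the role of $D_t^0$ in the boundedness estimate.
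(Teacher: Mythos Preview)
Your proposal follows the paper's approach closely and the overall structure is correct. There is, however, one step where the justification is incomplete: when you claim that $i_t(s) = \inf_{z\in S^{d-1}(0,1)}\hat D_t^0(s,0,z)$ is l.s.c.\ via Proposition~\ref{prop:lsc value}, you need $(s,z)\mapsto \hat D_t^0(s,0,z)$ to be l.s.c.\ jointly. You have not established this; your two-pronged analysis gives the comparison $\hat D_t^0\le D_t^0$ and the super $\delta$-homogeneity of $\hat D_t^0(s,0,\cdot)$, but neither yields joint lower semicontinuity of $\hat D_t^0$ in $(s,z)$. In particular, $\hat\theta_t^0(s,v)=\sup_{z\in\phi_t(s)}\hat\gamma_{t+1}^0(z,v)$ is only l.s.c.\ in $(s,v)$ if $\hat\gamma_{t+1}^0$ is already l.s.c.\ in its first argument, which is precisely the propagation property one is trying to prove.

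The paper closes this by observing that the enlarged market $(\hat\G_t)_{t\le T}$ itself satisfies all hypotheses of Theorem~\ref{PropagTheo}: $\hat\C$ is super $\delta$-homogeneous, sub-/super-additive, and the same condition $\inf_{z\in S^{d-1}(0,1)}\hat D_t^0(S_t,0,z)>0$ is exactly the SAIP-type hypothesis required there. Hence Theorem~\ref{PropagTheo} can be applied \emph{inductively to the enlarged market} to conclude that $\hat\gamma_u^0$ is a normal integrand for every $u\ge t$, so that $\hat D_t^0(s,0,z)$ is l.s.c.\ in $(s,z)$. Once you add this inductive invocation, the rest of your argument goes through as written.
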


\begin{proof}
As $\hat \C_t(x)\leq \C_t(x)$, we deduce by induction that $\hat D_t^0(s,0,v_t) \leq D_t^0(s,0,v_t)$. We adapt the main arguments of  the proof of Theorem \ref{PropagTheo}. Recall that $D_t^0(s,v_{t-1},v_{t})\ge \tilde h_t(s,v_{t-1})+D_t^0(s,0,v_{t})$ where $\tilde h_t$ is a continuous function. By Lemma  \ref{Delta-homogD}, we  have for $|v_t|\ge 1$,
\begin{align*}
    D_t^0(s,0,v_{t})\ge \hat D_t^0(s,0,v_{t})\ge \delta(|v_t|) \hat D_t^0(s,0,v_{t}/|v_t|)\ge \delta(|v_t|) \inf_{z\in S^{d-1}(0,1)}\hat D_t^0(s,0,z).
\end{align*} 
Therefore, we also get that  $\tilde \gamma_{t}^{\xi}(s,v_{t-1})=\inf_{v_{t}\in K_t(s,v_{t-1})}D_t^\xi(s,v_{t-1},v_{t})$ where    $K_t(s,v_{t-1})\subseteq \phi_t(s,v_{t-1}):= \bar B_t(0,r_t(s,v_{t-1})+1)$ and
\bean r_t(s,v_{t-1})&:=&\delta^{-1}\left(\frac{\lambda_t(s,v_{t-1})}{i_t(s)}  \right),\\
i_t(s)&:=&\inf_{z\in S^{d-1}(0,1)}\hat D_t^0(s,0,z),\,\lambda_t(s,v_{t-1})=|\tilde h_t(s,v_{t-1})|+\hat h_t^\xi(s,v_{t-1}).
\eean

Applying Theorem \ref{PropagTheo} by induction to the enlarged market, we deduce that $\hat D_t^0(s,0,z)$ is l.s.c. in $(s,z)$, see the  proof of Theorem \ref{PropagTheo}. We then conclude as in the proof of Theorem \ref{PropagTheo}.

\end{proof}

\begin{rem}{\rm  Recall that the condition $\inf_{z\in S^{d-1}(0,1)}\hat D_t^0(S_t,0,z)>0$ we impose in the theorem above means that SAIP holds for the enlarged market, a priori without fixed cost. Moreover, the other conditions we impose are also satisfied in the fixed costs model of Example \ref{FCM}. } $\triangle$

\end{rem}

\subsection{Computational feasibility under a weaker SAIP no-arbitrage condition}

In this section, we consider a no-arbitrage condition called LAIP, weaker than SAIP, but still sufficient to deduce that the essential infimum in the dynamic programming principle (\ref{DPP}) is a pointwise infimum so that it can be numerically computed.

\begin{lemm}\label{sub-add D}

Suppose that $\C_t$ is sub-additive for any $t\le T$. Then, for any payoff $\xi\in L^0(\R^d,\cF_T)$, the function $D_{t}^\xi$ defined by (\ref{D1}) satisfies the following inequality:
\begin{align*}
    D_{t}^\xi(V_{t-1}+\bar V_{t-1},V_t + \bar V_t) \leq D_t^\xi(V_{t-1},V_t) + D_t^0(\bar V_{t-1},\bar V_t).
\end{align*}
\end{lemm}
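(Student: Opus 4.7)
The plan is to run a backward induction on $t$, proving simultaneously the desired subadditivity of $D_t^\xi$ and the auxiliary pointwise inequality
$$\gamma_{t+1}^\xi(V_t + \bar V_t) \le \gamma_{t+1}^\xi(V_t) + \gamma_{t+1}^0(\bar V_t) \quad \text{a.s.} \qquad (\star)$$
Indeed, writing out (\ref{D1}) we get
$$D_t^\xi(V_{t-1}+\bar V_{t-1},V_t+\bar V_t)=\C_t\bigl(0,(V_t+\bar V_t)^{(2)}-(V_{t-1}+\bar V_{t-1})^{(2)}\bigr)+\theta_t^\xi(V_t+\bar V_t),$$
so by sub-additivity of $\C_t$ (applied to the first summand) it is enough to establish the subadditivity of $\theta_t^\xi$, namely $\theta_t^\xi(V_t+\bar V_t)\le \theta_t^\xi(V_t)+\theta_t^0(\bar V_t)$. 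Once $(\star)$ is known, this follows from the usual subadditivity of the conditional essential supremum operator.

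For the base of the induction, I use $\gamma_T^\xi(V_{T-1})=\xi^1+\C_T(0,\xi^{(2)}-V_{T-1}^{(2)})$ and apply sub-additivity of $\C_T$ directly:
$$\xi^1+\C_T\bigl(0,\xi^{(2)}-V_{T-1}^{(2)}-\bar V_{T-1}^{(2)}\bigr)\le \xi^1+\C_T\bigl(0,\xi^{(2)}-V_{T-1}^{(2)}\bigr)+\C_T\bigl(0,-\bar V_{T-1}^{(2)}\bigr),$$
which is $(\star)$ at time $T$. The right-hand side equals $\gamma_T^\xi(V_{T-1})+\gamma_T^0(\bar V_{T-1})$, so $(\star)$ holds at $T$, and then the argument above gives the statement of the lemma at time $T-1$.

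For the inductive step, suppose the lemma holds at time $t+1$ (i.e., for $D_{t+1}^\xi$). By the dynamic programming principle (\ref{DPP1}), $\gamma_{t+1}^\xi(V_t)=\mathop{\essinf_{\cF_{t+1}}}_{V_{t+1}} D_{t+1}^\xi(V_t,V_{t+1})$, and the family $\{D_{t+1}^\xi(V_t,V_{t+1}):V_{t+1}\in L^0(\R^d,\cF_{t+1})\}$ is downward-directed, as observed in the proof of Theorem \ref{DPP}. Hence, for every $\epsilon>0$ there exist $V_{t+1}^\epsilon,\bar V_{t+1}^\epsilon\in L^0(\R^d,\cF_{t+1})$ with $D_{t+1}^\xi(V_t,V_{t+1}^\epsilon)\le \gamma_{t+1}^\xi(V_t)+\epsilon$ and $D_{t+1}^0(\bar V_t,\bar V_{t+1}^\epsilon)\le \gamma_{t+1}^0(\bar V_t)+\epsilon$. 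Taking $V_{t+1}^\epsilon+\bar V_{t+1}^\epsilon$ as a candidate in the essential infimum defining $\gamma_{t+1}^\xi(V_t+\bar V_t)$, and applying the induction hypothesis,
$$\gamma_{t+1}^\xi(V_t+\bar V_t)\le D_{t+1}^\xi(V_t+\bar V_t,V_{t+1}^\epsilon+\bar V_{t+1}^\epsilon)\le D_{t+1}^\xi(V_t,V_{t+1}^\epsilon)+D_{t+1}^0(\bar V_t,\bar V_{t+1}^\epsilon)\le \gamma_{t+1}^\xi(V_t)+\gamma_{t+1}^0(\bar V_t)+2\epsilon.$$
Letting $\epsilon\downarrow 0$ delivers $(\star)$ at time $t$, which as explained above combined with sub-additivity of $\C_t$ gives the desired inequality for $D_t^\xi$.

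The only delicate point is the $\epsilon$-minimizing selection in the essential infimum; this is where the downward-directedness (already established inside the proof of Theorem \ref{DPP}) is essential, and it is what licenses the otherwise formal chain of inequalities above. Everything else is purely algebraic manipulation plus subadditivity of sup and $\C_t$.
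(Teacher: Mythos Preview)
Your proof is correct and follows essentially the same backward-induction scheme as the paper's own proof: one propagates the inequality $\gamma_{t+1}^\xi(V_t+\bar V_t)\le \gamma_{t+1}^\xi(V_t)+\gamma_{t+1}^0(\bar V_t)$ through $\theta_t^\xi$ (via subadditivity of the conditional essential supremum) and then to $D_t^\xi$ (via subadditivity of $\C_t$), and closes the loop by passing to the essential infimum. The only cosmetic difference is that the paper simply writes ``taking the essential infimum with respect to $V_{T-1}$ and $\bar V_{T-1}$'' to go from $D_{t+1}^\xi$-subadditivity to $\gamma_{t+1}^\xi$-subadditivity, whereas you make this step explicit via $\epsilon$-approximate minimizers extracted from the downward-directed family; both arguments are equivalent (and your version is arguably cleaner, though you should note that on the event $\{\gamma_{t+1}^\xi(V_t)=-\infty\}\cup\{\gamma_{t+1}^0(\bar V_t)=-\infty\}$ the inequality $(\star)$ is handled separately and trivially).
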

\begin{proof}
By definition, with the sub-additivity of $\C_T$, we have:
\bean
    \gamma_T^\xi(V_{T-1}+\bar V_{T-1}) &=& \xi^1 + \C_T((0,\xi^{(2)} - V_{T-1}^{(2)} - \bar V_{T-1}^{(2)})), \\
    &=& \xi^1 + \C_T((0,-V_{T-1}^{(2)})) + \C_T((0,-\bar V_{T-1}^{(2)})),\\
    &\leq& \gamma_T^\xi(V_{T-1}) + \gamma_T^0(\bar V_{T-1}).
\eean
We deduce that $\theta_{T-1}^\xi(V_{T-1}+\bar V_{T-1})\leq \theta_{T-1}^\xi(V_{T-1}) + \theta_{T-1}^0(\bar V_{T-1})$ and, since $D_{T-1}^\xi(V_{T-2},V_{T-1}) = \C_{T-1}((0,V_{T-1}-V_{T-2})) + \theta_T^\xi(V_{T-1})$, we get that:
\begin{align*}
    D_{T-1}^\xi(V_{T-2}+\bar V_{2-1},V_{T-1} + \bar V_{T-1}) \leq D_{T-1}^\xi(V_{T-2},V_{T-1}) + D_{T-1}^0(\bar V_{T-2},\bar V_{T-1}).
\end{align*}
Taking the essential infimum with respect to $V_{T-1}$ and $\bar V_{T-1}$, we get that
\begin{align*}
    \gamma_{T-1}^\xi(V_{T-2}+\bar V_{T-2})\leq \gamma_{T-1}^\xi(V_{T-2}) + \gamma_{T-1}^0(\bar V_{T-2}).
\end{align*}
We may pursue  by induction and conclude.
\end{proof}

We now introduce the LAIP condition. By Proposition \ref{JointlyMeasVersion}, we may suppose that the function $D_t^0(y,z)$ defined by (\ref{D1}) is l.s.c. in $(y,z)$ and it is $\cF_t\otimes \cB(\R^d)\otimes \cB(\R^d)$ measurable w.r.t. $(\omega,y,z)$. Note that, under AIP, the family of random variables $\cN_t:=\left\{Z_t\in L^0(\R^d,\cF_t), Z_t^1=0,\,\ D_t^0(0,Z_t) = 0\right\}$ coincides with $\left\{Z_t\in L^0(\R^d,\cF_t), Z_t^1=0,\,\ D_t^0(0,Z_t) \le 0\right\}$. Therefore, by lower semicontinuity, $\cN_t$ is a  closed subset of $L^0(\R^d,\cF_t)$. Moreover, $\cN_t$ is $\cF_t$- decomposable, see \cite[Section 5.4]{KS}. Therefore, by  \cite[Proposition 5.4.3]{KS}, there exists an $\cF_t$-measurable random set ${\rm N}_t$ such that $\cN_t=L^0({\rm N}_t,\cF_t)$.
\begin{defi}
We say that the  condition LAIP (Linear AIP condition) holds at time $t$ if AIP holds at time $t$  and $\cN_t$ is a linear vector space, or equivalently ${\rm N}_t$ is a.s. a linear subspace of $\R^d$. We say that LAIP holds if LAIP holds at any time.
\end{defi}
Note that if $\cN_t=\{0\}$, then SAIP, AIP and LAIP are equivalent. In general, SAIP implies LAIP. The following result gives a financial interpretation of LAIP. If LAIP holds, the cost to hedge the zero payoff from an initial risky position  $Z_t=V_t^{(2)}\in  L^0(\R^{d-1},\cF_t)$ is zero if and only if the cost is also zero for the position $-Z_t$. This symmetric property is related to the SRN condition of \cite{LV}.

\begin{lemm} Suppose that $\C_t$ is sub-additive and is positively super $\delta$-homogeneous, for any $t\le T$. The following statements are equivalent:

\begin{itemize}
\item [1.)] LAIP holds.\smallskip

\item [2.)] AIP holds and, if $Z_t\in L^0(\R^d,\cF_t)$, then $D_t^0(0,Z_t)=0$ if and only if $D_t^0(0,-Z_t)=0$, $t\le T$.
\end{itemize}
\end{lemm}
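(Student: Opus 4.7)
The strategy is to prove the two implications separately; the direction $(1) \Rightarrow (2)$ is essentially tautological, while the substantive content lies in $(2) \Rightarrow (1)$. Under LAIP, the set $\cN_t$ is a linear vector space, hence closed under negation, so $D_t^0(0, Z_t) = 0$ forces $-Z_t \in \cN_t$ and the reverse equality. Cash invariance of $\C_t$ together with the fact that $\theta_t^0$ does not depend on the cash coordinate ensures that $D_t^0(0, \cdot)$ depends only on the risky part of its argument, so it is harmless to restrict attention to vectors with zero first coordinate, as in the definition of $\cN_t$.

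For $(2) \Rightarrow (1)$, I would verify that $\cN_t$ is a linear subspace of $L^0(\R^d,\cF_t)$ by checking closure under addition, positive-scalar multiplication, and negation. Closure under addition follows from Lemma \ref{sub-add D} applied with $\xi = 0$ and null initial positions: it yields $D_t^0(0, Z_t + Z_t') \leq D_t^0(0, Z_t) + D_t^0(0, Z_t')$, which combined with the AIP lower bound $D_t^0(0, \cdot) \geq \gamma_t^0(0) = 0$ forces equality when both summands vanish. Closure under multiplication by $\lambda \geq 1$ is obtained by writing $\lambda = n + r$ with $n \in \N$ and $r \in [0,1)$ and iterating sub-additivity; for $\lambda \in (0, 1]$, I would invoke positive super $\delta$-homogeneity of $D_t^0$ (Lemma \ref{Delta-homogD}) with exponent $1/\lambda \geq 1$ applied to $\lambda Z_t$, yielding
\begin{align*}
    0 = D_t^0(0, Z_t) = D_t^0(0, (1/\lambda)(\lambda Z_t)) \geq \delta(1/\lambda) D_t^0(0, \lambda Z_t) \geq 0.
\end{align*}
The symmetry hypothesis in (2) then supplies the sign change, so $\cN_t$ is stable under all real scalar multiplications.

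No single step should present a real obstacle, since the structural ingredients have already been laid out earlier in the paper. The care needed is to coordinate the three hypotheses on $\C_t$ correctly: sub-additivity governs additive and integer-scaling steps, super $\delta$-homogeneity handles fractional down-scalings in $(0,1]$, and the symmetry from (2) furnishes closure under negation. The AIP lower bound $D_t^0(0,\cdot) \geq 0$ is used repeatedly to upgrade the upper-bound inequalities produced by sub-additivity into equalities. Once these pieces are chained in the right order, the linearity of $\cN_t$, and hence LAIP, follows.
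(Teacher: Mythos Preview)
Your proposal is correct and follows essentially the same route as the paper's proof: both arguments establish $(2)\Rightarrow(1)$ by showing that $\cN_t$ is closed under addition via Lemma~\ref{sub-add D}, closed under scalars in $(0,1]$ via the super $\delta$-homogeneity of $D_t^0$ from Lemma~\ref{Delta-homogD}, closed under integer multiples by iterating the additive step, and closed under negation by the symmetry hypothesis, with AIP supplying the lower bound $D_t^0(0,\cdot)\ge 0$ throughout. The only cosmetic difference is that you decompose a general $\lambda\ge 1$ as $n+r$ with $r\in[0,1)$, whereas the paper first establishes $\N\cN_t\subseteq\cN_t$ and then combines it with the $(0,1]$ case to cover all $\lambda\ge 0$; in either ordering the fractional piece relies on the $(0,1]$ step, so the two presentations are interchangeable.
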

\begin{proof}
The implication 1.) $\implies$ 2.) is immediate. Reciprocally, suppose that 2.) holds. Let us show that  $\cN_t$ is stable under addition. We consider $Z_t^1, Z_t^2\in \cN_t$. By Proposition \ref{sub-add D}, we get under AIP that 
$$0\le D_t^0(0,Z_t^1+ Z_t^2)\le D_t^0(0,Z_t^1)+D_t^0(0, Z_t^2)\le 0.$$
We deduce that $Z_t^1+ Z_t^2\in \cN_t$. By induction, we then deduce that for any integer $n$, $n\cN_t\subseteq \cN_t$. Moreover, by Lemma \ref{Delta-homogD}, if $\lambda_t\in L^0((0,1],\cF_t)$, 
$$D_t^0(0, V_{t})=D_t^0(0, \lambda_t(\lambda_t)^{-1}V_{t})\ge \delta((\lambda_t)^{-1})D_t^0(0, \lambda_tV_{t})\ge 0.$$
So $V_t\in \cN_t$ implies that $\lambda_tV_{t}\in \cN_t$ if $\lambda_t\in L^0((0,1],\cF_t)$. Finally, as $\N\cN_t\subseteq \cN_t$, 
$\lambda_tV_{t}\in \cN_t$ for every $\lambda_t\ge 0$. Moreover, $\cN_t$ is symmetric by assumption. The conclusion follows. \end{proof}

In the following, let us consider ${\rm N}_t^\perp:=\{z\in \R^d:~zx=0,\,\forall x\in {\rm N}_t\}$, the random $\cF_t$-measurable linear subspace orthogonal to ${\rm N}_t$. 

\begin{lemm}\label{lem: constant D} Suppose that $\C_t$ is sub-additive and LAIP holds.  Then, for all  $V_{t-1}\in L^0(\R^d,\cF_t)$, there exists  $V_t^2\in L^0({\rm N}_t^\perp,\cF_t)$ such that 
\begin{align*}
    D_t^\xi(V_{t-1},V_t) = D_t^\xi(V_{t-1},V_t^2)\text{ a.s..}
\end{align*}

\end{lemm}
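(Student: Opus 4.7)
The plan is to decompose $V_t$ orthogonally with respect to the random linear subspace ${\rm N}_t$ and then use the sub-additivity property of $D_t^\xi$ (Lemma \ref{sub-add D}) together with the linearity of ${\rm N}_t$ (guaranteed by LAIP) to show that the component along ${\rm N}_t$ contributes nothing to the cost.

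First I would observe that, by the very definition of $\cN_t$, every element of ${\rm N}_t$ has zero first coordinate, so ${\rm N}_t \subseteq \{0\}\times\R^{d-1}$ and ${\rm N}_t = \{0\}\times {\rm N}_t^{(2)}$, where ${\rm N}_t^{(2)}$ is an $\cF_t$-measurable random linear subspace of $\R^{d-1}$. Consequently ${\rm N}_t^\perp = \R\times ({\rm N}_t^{(2)})^\perp$. Writing $V_t=(V_t^1,V_t^{(2)})$, I would decompose $V_t^{(2)}=u_t+v_t$ with $u_t\in {\rm N}_t^{(2)}$ and $v_t\in ({\rm N}_t^{(2)})^\perp$, and set $V_t^2:=(V_t^1,v_t)\in {\rm N}_t^\perp$. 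Then $V_t-V_t^2=(0,u_t)\in {\rm N}_t$.

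Next I would prove the two inequalities. Applying Lemma \ref{sub-add D} with $\bar V_{t-1}=0$ and $\bar V_t=V_t-V_t^2\in {\rm N}_t\subseteq \cN_t$ (so $D_t^0(0,V_t-V_t^2)=0$ by AIP and the definition of $\cN_t$), one obtains
\begin{align*}
D_t^\xi(V_{t-1},V_t)=D_t^\xi(V_{t-1},V_t^2+(V_t-V_t^2))\le D_t^\xi(V_{t-1},V_t^2)+D_t^0(0,V_t-V_t^2)=D_t^\xi(V_{t-1},V_t^2).
\end{align*}
For the reverse direction, \emph{this is where LAIP enters crucially}: since ${\rm N}_t$ is a linear subspace, $-(V_t-V_t^2)\in {\rm N}_t$ as well, so Lemma \ref{sub-add D} applied with $\bar V_t=-(V_t-V_t^2)$ gives
\begin{align*}
D_t^\xi(V_{t-1},V_t^2)=D_t^\xi(V_{t-1},V_t-(V_t-V_t^2))\le D_t^\xi(V_{t-1},V_t)+D_t^0(0,-(V_t-V_t^2))=D_t^\xi(V_{t-1},V_t).
\end{align*}

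The main obstacle I expect is the measurability of the decomposition. One must verify that the random orthogonal projection $V_t\mapsto V_t^2$ is $\cF_t$-measurable, which reduces to showing that ${\rm N}_t$ (equivalently ${\rm N}_t^{(2)}$) is an $\cF_t$-measurable random closed set. This is already provided by the excerpt: lower semicontinuity of $D_t^0$ together with $\cF_t$-decomposability of $\cN_t$ yields, via \cite[Proposition 5.4.3]{KS}, that $\cN_t=L^0({\rm N}_t,\cF_t)$ for an $\cF_t$-measurable random set ${\rm N}_t$. The linearity ensured by LAIP upgrades ${\rm N}_t$ to a measurable random linear subspace, and the orthogonal projection onto such a subspace is a Carathéodory map, hence measurable, making $V_t^2\in L^0({\rm N}_t^\perp,\cF_t)$ as required.
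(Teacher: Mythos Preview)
Your proposal is correct and follows essentially the same approach as the paper: an orthogonal decomposition of $V_t$ with respect to the random linear subspace ${\rm N}_t$, followed by two applications of Lemma~\ref{sub-add D}, with LAIP used precisely to ensure that the negative of the ${\rm N}_t$-component also lies in $\cN_t$. Your treatment is in fact more detailed than the paper's, which dispatches the measurability of the decomposition with a single phrase (``by a measurable selection argument''), whereas you spell out why ${\rm N}_t\subseteq\{0\}\times\R^{d-1}$ and why the projection is $\cF_t$-measurable.
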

\begin{proof} By a measurable selection argument, it is possible to decompose any $V_t\in L^0(\R^d,\cF_t)$ into $V_t = V_t^1 + V_t^2$, where $V_t^1\in L^0({\rm N}_t,\cF_t)$, $V_t^2\in L^0({\rm N}_t^\perp,\cF_t)$. By Lemma \ref{sub-add D}, we have
\begin{align*}
    D_t^\xi(V_{t-1},V_t) \leq D_t^\xi(V_{t-1}, V_t^2) + D_t^0(0, V_t^1) = D_t^\xi(V_{t-1}, V_t^2).
\end{align*}
On the other hand,  as $V_t^2 = V_t - V_t^1$ and $- V_t^1\in \cN_t$ under LAIP, we also have
\begin{align*}
    D_t^\xi(V_{t-1},V_t^2) \leq D_t^\xi(V_{t-1}, V_t) + D_t^0(0, -V_t^1) = D_t^\xi(V_{t-1}, V_t).
\end{align*}
The conclusion follows.
\end{proof}

In the following, we assume the following condition.

\begin{Assum}\label{assump C_t bound} For any $t\le T$, 
$|\C_t((0,x^{(2)}))| <\bar h_t(x)$, where $\bar h_t$ is a random  function $\bar h_t:(\omega,x)\in\Omega\times\R^d\mapsto \bar h_t(\omega,x)\in\R$ which  is $\cF_t\otimes\cB(\R^d)$-measurable and continuous  a.s.  in $x$.
\end{Assum}

Note that the condition above holds under our initial hypothesis with $\bar h_t(x)=h_t(S_t,x)$ but, here, we do not stress the dependence of $\C_t$ on $S_t$.

\begin{theo} \label{PropagTheo1} Suppose that there exists an $\cF_{t+1}$-normal integrand function $\tilde \gamma_{t+1}^{\xi}$ defined on $\R^d$. Assume that Assumption \ref{assump C_t bound} holds. Suppose that the cost function $\C_t(z)$ is an $\cF_t$-normal integrand and $\C_t$ is sub-additive, positively super $\delta$-homogeneous.  If LAIP holds, then  $\gamma_{t}^{\xi}(V_{t-1})=\tilde \gamma_{t}^{\xi}(V_{t-1})$ where $\tilde \gamma_{t}^{\xi}(v_{t-1})$ is an $\cF_t$-normal integrand
\end{theo}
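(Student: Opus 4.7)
The plan is to adapt the strategy of Theorem \ref{PropagTheo}, replacing the quantitative bound provided by SAIP with its analogue obtained by projecting onto the $\cF_t$-measurable subspace ${\rm N}_t^\perp$ via Lemma \ref{lem: constant D}. First, since $\tilde\gamma_{t+1}^\xi$ is an $\cF_{t+1}$-normal integrand, Proposition \ref{Meas+lsc} will yield a version $\tilde\theta_t^\xi$ of $\theta_t^\xi$ that is an $\cF_t$-normal integrand on $\R^d$. Combined with the assumed normal integrand property of $\C_t$, this implies that
\begin{align*}
\bar D_t^\xi(v_{t-1},v_t) := \C_t((0,v_t^{(2)} - v_{t-1}^{(2)})) + \tilde\theta_t^\xi(v_t)
\end{align*}
is an $\cF_t$-normal integrand on $\R^d \times \R^d$ with $\bar D_t^\xi(V_{t-1}, V_t) = D_t^\xi(V_{t-1},V_t)$ a.s.\ for every $V_{t-1}, V_t \in L^0(\R^d, \cF_t)$.

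Next I would invoke Lemma \ref{lem: constant D} to restrict the essential infimum defining $\gamma_t^\xi(V_{t-1})$ to $L^0({\rm N}_t^\perp, \cF_t)$. Under LAIP, every non-zero element of ${\rm N}_t^\perp$ lies outside $\cN_t$, hence $\bar D_t^0(0,z) > 0$ a.s.\ for every unit vector $z \in {\rm N}_t^\perp$. Since $z \mapsto \bar D_t^0(0,z)$ is lower semicontinuous on the $\cF_t$-measurable compact random set $S^{d-1}(0,1) \cap {\rm N}_t^\perp$, a measurable selection argument produces an $\cF_t$-measurable minimiser and shows that $i_t(\omega) := \inf_{z \in S^{d-1}(0,1) \cap {\rm N}_t^\perp} \bar D_t^0(0,z)$ is strictly positive a.s. Following the coercivity argument of Theorem \ref{PropagTheo}, sub-additivity of $\C_t$ together with Assumption \ref{assump C_t bound} gives $\bar D_t^\xi(v_{t-1}, v_t) \geq \bar D_t^0(0,v_t) - \tilde h_t(v_{t-1})$ with $\tilde h_t$ continuous in $v_{t-1}$, and Lemma \ref{Delta-homogD} then yields $\bar D_t^0(0, v_t) \geq \delta(|v_t|)\, i_t$ for $v_t \in {\rm N}_t^\perp$ with $|v_t| \geq 1$. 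This forces any minimiser into the compact random set $\tilde\phi_t(v_{t-1}) := \bar B(0, r_t(v_{t-1})+1) \cap {\rm N}_t^\perp$, where $r_t(v_{t-1}) := \delta^{-1}((|\tilde h_t(v_{t-1})| + \hat h_t^\xi(v_{t-1}))/i_t)$ is upper semicontinuous in $v_{t-1}$, $\hat h_t^\xi$ being the continuous upper bound on $|\bar D_t^\xi(v_{t-1}, 0)|$ obtained as in Lemma \ref{ContinuousBoundD}.

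Finally, the upper hemicontinuity of $\tilde\phi_t$ (via Lemma \ref{lem: uppercontinuous ball}) and Proposition \ref{prop:lsc value} imply that
\begin{align*}
\tilde\gamma_t^\xi(v_{t-1}) := \inf_{v_t \in \tilde\phi_t(v_{t-1})} \bar D_t^\xi(v_{t-1}, v_t)
\end{align*}
is lower semicontinuous in $v_{t-1}$ and an $\cF_t$-normal integrand, while Lemma \ref{ess omega-wise lemm} combined with the $\cF_t$-measurability of ${\rm N}_t$ (obtained from \cite[Proposition 5.4.3]{KS} since $\cN_t$ is closed and decomposable) yields $\gamma_t^\xi(V_{t-1}) = \tilde\gamma_t^\xi(V_{t-1})$ a.s. The \textbf{main obstacle} I expect is the careful handling of the random subspace ${\rm N}_t^\perp$ when constraining the infimum: one must verify that the selection of minimisers in ${\rm N}_t^\perp$ remains measurable and compatible with the normal-integrand framework, so that the restriction from the unconstrained infimum to the one over ${\rm N}_t^\perp$ preserves both measurability and lower semicontinuity of $\tilde\gamma_t^\xi$.
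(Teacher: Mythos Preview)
Your proposal follows essentially the paper's own argument: restrict the infimum to ${\rm N}_t^\perp$ via Lemma \ref{lem: constant D}, show $i_t := \inf_{z \in {\rm N}_t^\perp \cap S^{d-1}(0,1)} D_t^0(0,z) > 0$ under LAIP, confine the optimisation to a ball whose radius depends continuously on $v_{t-1}$, and conclude via Proposition \ref{prop:lsc value} and Lemma \ref{ess omega-wise lemm}. Two minor corrections: the upper bound on $D_t^\xi(v_{t-1},0)$ should come directly from Assumption \ref{assump C_t bound} as $\bar h_t(v_{t-1}) + \theta_t^\xi(0)$ rather than from Lemma \ref{ContinuousBoundD} (which requires Assumption \ref{CondSuppHemmiCont}, not assumed here), and the paper handles the degenerate case $\{{\rm N}_t^\perp = \{0\}\}$ separately since there $M_t = \emptyset$ and $\gamma_t^\xi(V_{t-1}) = D_t^\xi(V_{t-1},0)$ directly.
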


\begin{proof}
By Lemma \ref{lem: constant D}, we get that
\begin{align*}
    \mathop{\essinf_{\cF_t}}\limits_{V_t\in L^0(\R^d,\cF_t)} D_t^\xi(V_{t-1},V_t) = \mathop{\essinf_{\cF_t}}\limits_{V_t\in L^0({\rm N}_t^\perp,\cF_t)} D_t^\xi(V_{t-1},V_t).
\end{align*}
Since ${\rm N}_t^\perp$ is an $\cF_t$-measurable random closed set, by  Proposition \ref{JointlyMeasVersion} and Lemma \ref{ess omega-wise lemm}, we have
\begin{align*}
    \mathop{\essinf_{\cF_t}}\limits_{V_t\in L^0({\rm N}_t^\perp,\cF_t)} D_t^\xi(V_{t-1},V_t) = \inf_{y\in {\rm N}_t^\perp} D_t^\xi(V_{t-1},y).
\end{align*}
On $\left\{\omega:~{\rm N}_t^\perp(\omega) = \{0\}\right\}\in\cF_t$, we have $\gamma_t^\xi(V_{t-1}) = D_t^\xi(V_{t-1},0)$. On the complementary set, $\left\{{\rm N}_t^\perp \neq \{0\}\right\}\in\cF_t$, under LAIP, we have $\inf_{z\in M_t}D_t^0(0,z) > 0$,
where $M_t = {\rm N}_t^\perp\cap S^{d-1}(0,1)\ne \emptyset$. We now adapt the notations and the main arguments in the proof of Theorem \ref{PropagTheo} with $V_t\in {\rm N}_t^\perp$. In our case, we use Assumption \ref{assump C_t bound} in order to dominate the cost function by a continuous function. By Lemma  \ref{Delta-homogD}, for all $v_t\in {\rm N}_t^\perp$, we may suppose w.l.o.g.  that $v_t^1=0$ and we get that
\begin{align*}
    D_t^0(0,v_{t})\ge \delta(|v_t|) D_t^0(0,v_{t}/|v_t|)\ge \delta(|v_t|) \inf_{z\in M_t}D_t^0(0,z).
\end{align*} 
Moreover, by Assumption \ref{assump C_t bound}, we have:
\begin{align*}
    D_t(v_{t-1},0) = \C_t((0,v_{t-1}^{(2)})) + \theta_t^\xi(0) \leq \bar h_t(v_{t-1}) + \theta_t^\xi(0).
\end{align*}

Therefore, we deduce that  $\tilde \gamma_{t}^{\xi}(v_{t-1})=\inf_{v_{t}\in \phi_t(v_{t-1})}D_t^\xi(v_{t-1},v_{t})$ where $\phi$ is the set-valued mapping     $\phi_t(v_{t-1}):= \bar B_t(0,r_t(v_{t-1})+1)$ and
\bean r_t(v_{t-1})&:=&\delta^{-1}\left(\frac{\lambda_t(v_{t-1})}{i_t}  \right),\\
i_t&:=&\inf_{z\in M_t} D_t^0(0,z),\,\lambda_t(v_{t-1})= \tilde h_t(v_{t-1})+\bar h_t(v_{t-1})+\theta_t^\xi(0).
\eean
 By Corollary \ref{coro: measurability}, $i_t>0$ is $\cF_t$-measurable while $\lambda_t(\omega, v_{t-1})$ is $\cF_t\otimes\cB(\R^d)$-measurable and continuous in $v_{t-1}$. Therefore,  $r_t(\omega, v_{t-1})$ is $\cF_t\otimes\cB(\R^d)$-measurable and continuous in $v_{t-1}$. We deduce that $\bar B_t(0,r_t(v_{t-1}))$ is a continuous set-valued mapping by Corollary \ref{ContBall}. We then conclude  by Proposition \ref{prop:lsc value}.

\end{proof}

Note that the theorem above states that, under LAIP, $\gamma_{t}^{\xi}(V_{t-1})$ is a lower-semicontinuous function of $V_{t-1}$. Therefore, by Lemma \ref{ess omega-wise lemm}, $\gamma_{t}^{\xi}(V_{t-1})$ may be computed pointwise as $\gamma_{t}^{\xi}(V_{t-1})=\inf_{y\in \R^d}\left( \C_t((0,y^{(2)}-V_{t-1}^{(2)}))+\theta_{t}^{\xi}(y) \right).$ Moreover, the infimum is reached so that $\gamma_{t}^{\xi}(V_{t-1})$ is a minimal cost.

\section{Appendix}

\subsection{Normal integrands}

\begin{defi}\label{NI}
Let $\cF$ be a complete $\sigma$-algebra.  We say that the function $(\omega,x)\in \Omega\times\R^k \mapsto f(\omega,x)\in \R$ is  an $\cF$-normal integrand if $f$ is $\cF\otimes\cB(\R^k)$-measurable and lower semi-continuous  in $x$. If $Z\in L^0(\R^k,\cF)$, we use the notation $f(Z):\omega\mapsto f(Z(\omega)) = f(\omega,Z(\omega))$. If $f$ is $\cF\otimes\cB(\R^k)$-measurable then $f(Z)\in L^0(\R^k,\cF)$.
\end{defi}

By \cite[Theorem 14.37]{RW}, we have:

\begin{prop}\label{measurability}
If $f$ is an $\mathcal{F}$-normal integrand, then $\inf_{y\in\R^d}f(\omega,y)$ is $\cF$-measurable and 
$\{(\omega,x)\in\Omega\times\R^d: f(\omega,x) = \inf_{y\in\R^d}f(\omega,y)\}\in\mathcal{F}\otimes\mathcal{B}(\R^d)$ is a measurable closed set.
\end{prop}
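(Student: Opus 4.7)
The plan is to treat the two conclusions separately: first establish $\cF$-measurability of the infimum via a measurable projection argument, then deduce measurability and closedness of the argmin set from that.

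For the first part, set $g(\omega) := \inf_{y \in \R^d} f(\omega, y)$. For each $c \in \R$, the sublevel set $E_c := \{(\omega, y) \in \Omega \times \R^d : f(\omega, y) < c\}$ belongs to $\cF \otimes \cB(\R^d)$ by the joint measurability of $f$. The projection of $E_c$ onto $\Omega$ is exactly $\{\omega : g(\omega) < c\}$. Since $\cF$ is complete (this is built into the definition of a normal integrand in Definition \ref{NI}), the measurable projection theorem yields that this projection belongs to $\cF$. As $c$ is arbitrary, $g$ is $\cF$-measurable.

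For the second part, denote $M := \{(\omega, x) : f(\omega, x) = g(\omega)\}$. Since $g(\omega) \le f(\omega, x)$ for all $(\omega, x)$, we have $M = \{(\omega, x) : f(\omega, x) \le g(\omega)\}$. The map $(\omega, x) \mapsto g(\omega)$ is $\cF \otimes \cB(\R^d)$-measurable as a lift of the $\cF$-measurable $g$, and $f$ is jointly measurable by hypothesis, so $M \in \cF \otimes \cB(\R^d)$. Closedness of the section $M_\omega$ follows from the identity
$$M_\omega = \bigcap_{n \ge 1} \left\{ x \in \R^d : f(\omega, x) \le g(\omega) + 1/n \right\},$$
valid when $g(\omega) \in \R$: each set on the right is a sublevel set of the l.s.c.\ function $f(\omega, \cdot)$, hence closed, and the intersection is closed.

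The main obstacle I anticipate is the handling of the degenerate cases $g(\omega) \in \{-\infty, +\infty\}$, where the displayed identity for $M_\omega$ has to be interpreted carefully. I would dispatch these by partitioning $\Omega$ along the $\cF$-measurable events $\{g = -\infty\}$, $\{g = +\infty\}$, and $\{g \in \R\}$: on the first, $M_\omega = \emptyset$; on the second, $f(\omega, \cdot) \equiv +\infty$ and $M_\omega = \R^d$; on the third, the countable-intersection argument applies verbatim. Each of these sets is closed in its respective case, so closedness of the sections holds throughout. The other subtle point, the use of the measurable projection theorem, is precisely what forces the completeness hypothesis on $\cF$ to be carried as part of the definition of normal integrand.
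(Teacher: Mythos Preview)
Your proof is correct. The paper does not actually give a proof of this proposition: it simply cites \cite[Theorem 14.37]{RW} and states the result. So there is no ``same approach'' to compare against; you have supplied a self-contained argument where the authors defer to a reference.

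Two minor remarks on your write-up. First, the countable-intersection identity for $M_\omega$ in the finite case is more than you need: once you have written $M_\omega = \{x : f(\omega,x) \le g(\omega)\}$, this is already a single sublevel set of the l.s.c.\ function $f(\omega,\cdot)$ at the real level $g(\omega)$, hence closed directly. Second, under the paper's Definition~\ref{NI} the integrand is $\R$-valued, so the case $g(\omega)=+\infty$ cannot occur (an infimum of real numbers over a nonempty index set is never $+\infty$); your partition therefore really has only two live cells, $\{g=-\infty\}$ and $\{g\in\R\}$. Neither point affects correctness.
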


\begin{coro} \label{coro: measurability}
For any $\cF$-normal integrand $f:\Omega\times\R^d\to\overline{\R}$ and any $\cF$-measurable random set $A$, let $p(\omega) = \inf_{x\in A} f(\omega,x)$. Then the function $p:\Omega\to\overline{\R}$ is $\cF$-measurable.
\end{coro}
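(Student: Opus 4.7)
The plan is to reduce the claim to Proposition~\ref{measurability} by augmenting $f$ with the indicator of $A$. Specifically, I would define
$$\tilde f(\omega, x) := f(\omega, x) + \chi_A(\omega, x), \qquad \chi_A(\omega, x) := \begin{cases} 0 & \text{if } x \in A(\omega), \\ +\infty & \text{otherwise}, \end{cases}$$
with the convention $-\infty + \infty = +\infty$. Then for every $\omega$,
$$\inf_{x \in \R^d} \tilde f(\omega, x) = \inf_{x \in A(\omega)} f(\omega, x) = p(\omega),$$
so once I verify that $\tilde f$ is itself an $\cF$-normal integrand, Proposition~\ref{measurability} immediately yields the $\cF$-measurability of $p$.

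For the joint $\cF \otimes \cB(\R^d)$-measurability of $\tilde f$, the hypothesis that $A$ is an $\cF$-measurable random set (in the sense adopted from \cite{Mol} throughout Section~\ref{Model}) means exactly that $\mathrm{Graph}(A) \in \cF \otimes \cB(\R^d)$; hence $\chi_A$ is jointly measurable, and so is $\tilde f = f + \chi_A$ since $f$ is.

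For the lower semicontinuity of $x \mapsto \tilde f(\omega, x)$, I would use that $A(\omega)$ is closed (as is standard in the random set framework adopted in the paper). If $x_0 \in A(\omega)$, then $\tilde f(\omega, x_0) = f(\omega, x_0) \le \liminf_{x \to x_0} f(\omega, x) \le \liminf_{x \to x_0} \tilde f(\omega, x)$ because $\tilde f \ge f$ pointwise. If $x_0 \notin A(\omega)$, closedness of $A(\omega)$ provides a neighborhood of $x_0$ disjoint from $A(\omega)$ on which $\tilde f \equiv +\infty$, making l.s.c.\ at $x_0$ trivial. Thus $\tilde f$ is an $\cF$-normal integrand and the claim follows from Proposition~\ref{measurability}.

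The only delicate step is the l.s.c.\ property of $\tilde f$, which rests on the closed-valuedness of $A$. Should one need to work only under graph-measurability, an alternative route is to write $\{p < c\} = \pi_\Omega\{(\omega, x) \in \mathrm{Graph}(A) : f(\omega, x) < c\}$ for each $c \in \R$ and invoke the measurable projection theorem; this is legitimate here since $\cF$ is complete, and it would give the measurability of $p$ directly without passing through the normal integrand machinery.
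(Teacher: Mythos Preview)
Your proof is correct and follows essentially the same route as the paper: the paper also adds the indicator $\delta_{A(\omega)}$ to $f$, observes that the resulting function is an $\cF$-normal integrand (using that $A$ is closed and $\cF$-measurable), and then invokes Proposition~\ref{measurability}. Your write-up simply spells out the l.s.c.\ verification and the graph-measurability of $\chi_A$ in more detail, and your closing remark on the projection-theorem alternative is a valid but unnecessary addition.
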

\begin{proof} Let us define $\delta_{A(\omega)}(x)=+\infty$ if $x\notin A(\omega)$ and $\delta_{A(\omega)}(x)=0$ otherwise. Then, the function $g(\omega,x): = f(\omega,x) + \delta_{A(\omega)}(x)$ is an $\cF$-normal integrand since $A$ is closed and $\cF$-measurable. Moreover,  we observe that $p(\omega) = \inf_{x\in A(\omega)} g(\omega,x)$. The conclusion follows from Proposition \ref{measurability}.
\end{proof}

\begin{coro}\label{measurability-compact}
If $f$ is an $\mathcal{F}$-normal integrand, and if $K$ is an $\cF$-measurable set-valued compact set, then  $\inf_{y\in K(\omega)}f(\omega,y)$ is $\cF$-measurable. Moreover, $M(\omega)=\{x\in K(\omega): f(\omega,x) = \inf_{y\in K(\omega)}f(\omega,y)\}\in\mathcal{F}\otimes\mathcal{B}(\R^d)$ is a non-empty $\mathcal{F}$-measurable closed set. In particular, $\inf_{y\in K(\omega)}f(\omega,y)=f(\omega,\hat y)$ where $\hat y\in L^0(M,\cF)\ne \emptyset$.
\end{coro}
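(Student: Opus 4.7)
The plan is to reduce the compactly constrained problem to the unconstrained setting already handled by Proposition~\ref{measurability}, using the indicator integrand trick already employed in Corollary~\ref{coro: measurability}. Define $\delta_{K(\omega)}(x) := 0$ if $x \in K(\omega)$ and $\delta_{K(\omega)}(x) := +\infty$ otherwise. Since $K$ is $\cF$-measurable with closed values (every compact set is closed), $\delta_K$ is an $\cF$-normal integrand: it is l.s.c.\ in $x$ because $K(\omega)$ is closed, and its $\cF \otimes \cB(\R^d)$-measurability follows from the graph-measurability of $K$. Consequently $g(\omega,x) := f(\omega,x) + \delta_{K(\omega)}(x)$ is an $\cF$-normal integrand that satisfies $\inf_{y \in \R^d} g(\omega,y) = \inf_{y \in K(\omega)} f(\omega,y)$ pointwise in $\omega$.

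Applying Proposition~\ref{measurability} to $g$ now gives, in one stroke, both that $p(\omega) := \inf_{y \in K(\omega)} f(\omega,y)$ is $\cF$-measurable and that the set
\[
\mathrm{Graph}(M) = \bigl\{(\omega,x) \in \Omega \times \R^d : g(\omega,x) = p(\omega)\bigr\} = \bigl\{(\omega,x) : x \in K(\omega),\ f(\omega,x) = p(\omega)\bigr\}
\]
belongs to $\cF \otimes \cB(\R^d)$. For each fixed $\omega$, $M(\omega)$ is closed as the intersection of the closed set $K(\omega)$ with the closed sublevel set $\{x : f(\omega,x) \le p(\omega)\}$ of the l.s.c.\ function $f(\omega,\cdot)$.

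The remaining task is non-emptiness of $M(\omega)$, and this is the single point where compactness (rather than mere closedness) of $K$ is used: for each $\omega$, the l.s.c.\ function $f(\omega,\cdot)$ attains its infimum on the compact set $K(\omega)$ by the classical Weierstrass theorem, so $M(\omega) \ne \emptyset$. Combining graph-measurability with non-empty values, a standard measurable selection argument (as invoked throughout the paper) yields $\hat y \in L^0(M, \cF)$ with $f(\omega, \hat y(\omega)) = p(\omega)$ a.s., which is the final claim.

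The proof is essentially routine once Proposition~\ref{measurability} is in hand, so no step constitutes a real obstacle; the only subtlety is checking that $\delta_K$ really is a normal integrand, which hinges on the hypothesis that the compact-valued multifunction $K$ is $\cF$-measurable in the graph-measurability sense.
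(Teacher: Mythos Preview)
Your proof is correct and follows essentially the same approach as the paper: redefining $f$ to be $+\infty$ outside $K(\omega)$ (equivalently, adding the indicator $\delta_{K(\omega)}$) so that Proposition~\ref{measurability} applies, then invoking compactness for non-emptiness of $M(\omega)$ and a measurable selection argument for $\hat y$. The only difference is that you spell out the closedness of $M(\omega)$ and the normal-integrand property of $\delta_K$ in more detail than the paper's terse version.
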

{\sl Proof.} It suffices to extend the function $f$ to $\R^d$ by setting $f=+\infty$ on $\R^d\setminus K(\omega)$ so that $f$ is still l.s.c. on $\R^d$. Then, we may apply Proposition \ref{measurability}. Notice that $M(\omega)\ne \emptyset$ a.s. by a compactness argument so that $L^0(M,\cF)\ne \emptyset$ by a measurable selection argument. \fdem \smallskip

In the following, we use the abuse of notation $f(y)=f(\omega,y)$ for  any $f: \Omega\times\R^d\to\overline{\R}$.

\begin{lemm} \label{ess omega-wise lemm}
For any $\cF$-normal integrand $f: \Omega\times\R^d\to\overline{\R}$ and any non-empty $\cF$-measurable closed set $A$, we have:
\begin{align*}
    \essinf_{\cF}\left\{ f(a),\, a\in L^0(A,\cF)\right\} = \inf_{a\in A}f(a) \text{ a.s..}
\end{align*}
\end{lemm}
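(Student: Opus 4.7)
The plan is to prove the equality by two inequalities. The easy direction is to observe that for every $a\in L^0(A,\cF)$ we have $\inf_{x\in A(\omega)}f(\omega,x)\le f(\omega,a(\omega))$ pointwise a.s., and by Corollary \ref{coro: measurability} the map $\omega\mapsto p(\omega):=\inf_{x\in A(\omega)}f(\omega,x)$ is $\cF$-measurable. Since the essential infimum is the greatest $\cF$-measurable lower bound of the family $\{f(a):a\in L^0(A,\cF)\}$, this gives $p\le \essinf_{\cF}\{f(a),\,a\in L^0(A,\cF)\}$ a.s.

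For the reverse inequality, I would construct, for each integer $n\ge 1$, an $\cF$-measurable selection $a_n\in L^0(A,\cF)$ which is $1/n$-optimal. Set
\begin{align*}
B_n(\omega):=\bigl\{x\in A(\omega):\ f(\omega,x)\le (p(\omega)\vee(-n))+1/n\bigr\}.
\end{align*}
Using that $f+\delta_A$ is an $\cF$-normal integrand (as in the proof of Corollary \ref{coro: measurability}) and that $p\vee(-n)$ is $\cF$-measurable and real-valued, the graph of $B_n$ lies in $\cF\otimes\cB(\R^d)$. By the definition of the infimum $p$, the set $B_n(\omega)$ is non-empty for every $\omega$: if $p(\omega)>-\infty$ by the $1/n$-approximation property of the infimum, and if $p(\omega)=-\infty$ because then there exists $x\in A(\omega)$ with $f(\omega,x)\le -n$. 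By a measurable selection argument we obtain $a_n\in L^0(B_n,\cF)\subseteq L^0(A,\cF)$.

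Since $a_n\in L^0(A,\cF)$, the random variable $f(a_n)$ belongs to the family whose essential infimum is taken, hence
\begin{align*}
\essinf_{\cF}\{f(a),\,a\in L^0(A,\cF)\}\le f(a_n)\le (p\vee(-n))+1/n \quad \text{a.s.}
\end{align*}
Letting $n\to\infty$, the right-hand side converges a.s. to $p$ (with the convention $(-\infty)\vee(-n)=-n\to-\infty$), and by intersecting the countably many full-measure sets on which the inequality holds, we conclude $\essinf_{\cF}\{f(a),\,a\in L^0(A,\cF)\}\le p$ a.s.

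The main technical obstacle is the measurability of $B_n$ and the non-emptiness of $B_n(\omega)$ in the case $p(\omega)=-\infty$, which is handled by the truncation $p\vee(-n)$ so that we never feed an infinite value into the inequality defining $B_n$; all remaining ingredients (measurability of $p$, the measurable selection step) are direct consequences of Proposition \ref{measurability} and Corollary \ref{coro: measurability} already established in the appendix.
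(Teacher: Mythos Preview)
Your proof is correct and follows essentially the same strategy as the paper's: both directions rely on the $\cF$-measurability of $p(\omega)=\inf_{x\in A(\omega)}f(\omega,x)$ from Corollary \ref{coro: measurability}, and the nontrivial inequality is obtained by measurable selection of $1/n$-optimizers in $A$. Your truncation $p\vee(-n)$ is in fact a small improvement over the paper's version, which writes the approximating set as $\{(\omega,a):a\in A(\omega),\ p(\omega)\le f(a)<p(\omega)+1/n\}$ and thereby glosses over the case $p(\omega)=-\infty$; the only imprecision in your write-up is the claim that $p\vee(-n)$ is real-valued, which fails when $p(\omega)=+\infty$, but this does not affect the measurability of $B_n$ nor its non-emptiness.
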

\begin{proof}
We first prove that
\begin{align*}
    \essinf_{\cF}\left\{ f(a), a\in L^0(A,\cF)\right\} \leq \inf_{a\in A}f(a).
\end{align*}
Recall that $f$ is an $\cF$-normal integrand and $\inf_{a\in A} f(a)$ is $\cF$-measurable by Corollary \ref{coro: measurability}. Therefore, the  set
\begin{align*}
    \{(\omega,a): a\in A(\omega), \inf_{x\in A}f(x) \leq f(a) < \inf_{x\in A}f(x) + 1/n\}
\end{align*}
is $\cF$-measurable and has non-empty $\omega$ sections for each $n\in\N$. By measurable selection argument, we deduce  $a^{n}\in L^0(A,\cF)$ such that 
$$\inf_{a\in A}f(a)\leq f(a^n) < \inf_{a\in A}f(a) + 1/n.$$ This implies that $\lim_nf(a^n) = \inf_{a\in A}f(a)$. Therefore, \begin{align*}
    \inf_{a\in A}f(a) = \inf_nf(a^n) \geq \essinf_{\cF}\left\{ f(a), a\in L^0(A,\cF)\right\}.
\end{align*}

For the reversed inequality,   for each $a\in L^0(A,\cF)$, $f(a) \geq \inf_{a\in A}f(a)$
and, since $\inf_{a\in A}f(a)$ is $\cF$-measurable by Corollary \ref{coro: measurability}, we deduce by definition of the conditional essential infimum that
\begin{align*}
    \essinf_{\cF}\left\{ f(a), a\in L^0(A,\cF)\right\} \geq \inf_{a\in A}f(a) \text{ a.s..}
\end{align*}
\end{proof}
We  recall a result from \cite{BCL} which characterizes a conditional essential supremum as a pointwise supremum on a random set. Let $\cH$ and $\cF$ be two complete sub-$\sigma$-algebras of $\cF_T$ such that $\cH\subseteq\cF$. The conditional support of $X\in L^0(\R^d,\cF)$ with respect to $\cH$ is the smallest $\cH$-graph measurable random set $\text{supp}_{\mathcal{H}}X$ containing the singleton $\{X\}$ a.s., see \cite{BCL}.

\begin{prop}\label{esssup}
Let $h:\Omega\times\R^k\to\R$ be a $\cH\otimes\mathcal{B}(\R^k)$-measurable function which is l.s.c. in $x$. Then, for all $X\in L^0(\R^k,\mathcal{F})$,
\bean
    \esssup_{\mathcal{H}}h(X) = \sup_{x\in\text{supp}_{\mathcal{H}}X}h(x)\quad a.s..
\eean
\end{prop}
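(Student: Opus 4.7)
The plan is to prove the two inequalities between $\esssup_{\mathcal{H}} h(X)$ and $\sup_{x\in \text{supp}_{\mathcal{H}}X} h(x)$ separately, relying on the characterization of $\text{supp}_{\mathcal{H}}X$ as the smallest $\mathcal{H}$-measurable random closed set containing $X$ almost surely.

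First, I would show that the right-hand side is $\mathcal{H}$-measurable. The $\mathcal{H}$-measurable closed random set $\text{supp}_{\mathcal{H}}X$ admits a Castaing representation $(x^n)_{n\ge 1}$ of $\mathcal{H}$-measurable selections that is a.s.\ dense in the support. Using the lower semicontinuity of $h(\omega,\cdot)$, for any $x$ in the support and any approximating sequence $x^{n_k}(\omega)\to x$, one has $h(\omega,x)\le\liminf_k h(\omega,x^{n_k}(\omega))\le \sup_n h(\omega,x^n(\omega))$, so that $\sup_{x\in \text{supp}_{\mathcal{H}}X(\omega)} h(\omega,x)=\sup_n h(\omega,x^n(\omega))$. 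This countable supremum is $\mathcal{H}$-measurable since each $h(x^n)$ is $\mathcal{H}$-measurable.

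The upper bound $\esssup_{\mathcal{H}} h(X)\le \sup_{x\in \text{supp}_{\mathcal{H}}X} h(x)$ then follows immediately: since $X\in \text{supp}_{\mathcal{H}}X$ a.s., one has $h(X)\le \sup_{x\in \text{supp}_{\mathcal{H}}X} h(x)$ a.s., and the right-hand side being $\mathcal{H}$-measurable by the previous step, the definition of the conditional essential supremum yields the inequality.

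For the reverse inequality, I would set $Y:=\esssup_{\mathcal{H}} h(X)$, so that $h(X)\le Y$ a.s., and consider the random set $F(\omega):=\{x\in \R^k:~h(\omega,x)\le Y(\omega)\}$. This set is $\mathcal{H}$-measurable (by the joint $\mathcal{H}\otimes\mathcal{B}(\R^k)$-measurability of $h$ and the $\mathcal{H}$-measurability of $Y$) and has closed values (by lower semicontinuity of $h(\omega,\cdot)$). Since $X\in F$ a.s. and $F$ is closed $\mathcal{H}$-measurable, the minimality characterization of the conditional support yields $\text{supp}_{\mathcal{H}}X\subseteq F$ a.s., whence $\sup_{x\in \text{supp}_{\mathcal{H}}X} h(x)\le Y$ a.s. The only delicate point is the bookkeeping with the Castaing representation and the correct invocation of the minimality property of $\text{supp}_{\mathcal{H}}X$ from \cite{BCL}; the l.s.c.\ hypothesis on $h$ is the essential ingredient, as it simultaneously forces the countable supremum over a dense selection to agree with the genuine supremum and makes the sub-level set $F$ closed.
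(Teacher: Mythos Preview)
Your argument is correct. The two-inequality structure with the Castaing representation for $\mathcal{H}$-measurability of the right-hand side, the obvious bound from $X\in\text{supp}_{\mathcal{H}}X$ a.s., and the sub-level set $F$ combined with the minimality of the conditional support is the standard and natural route; the use of lower semicontinuity in both places (density argument and closedness of $F$) is exactly right.

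There is one point of comparison worth noting: the paper does not actually give a proof of this proposition. It is stated in the appendix with the preface ``We recall a result from \cite{BCL}'' and no argument is supplied. So your write-up is not being compared against a proof in the paper but is rather supplying the missing argument. Your approach is essentially the one used in \cite{BCL}, and the paper itself reproduces the density-plus-l.s.c.\ half of your reasoning in the proof of Lemma~\ref{CountableRepGamma}, where it passes from the supremum over the support to the supremum over a Castaing representation. A minor bookkeeping remark: the essential supremum $Y$ may take the value $+\infty$ on a set of positive measure, in which case $F(\omega)=\R^k$ there and the inclusion $\text{supp}_{\mathcal{H}}X\subseteq F$ is trivial; this does not affect your argument but is worth stating explicitly.
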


\begin{prop} \label{JointlyMeasVersion}
Fix $\xi^1\in L^0(\R,\cF)$ and $d\ge 2$. Let us consider  a random function $f:\Omega\times\R^{d}\to\R$ that satisfies $f(z) = z^1 + f(0,z^{(2)})$, for any $z=(z^1, z^{(2)})\in \R^{d}$. Suppose that $z\mapsto f(z)$ is l.s.c. a.s.. Then, there exists a  $\mathcal{F}_{t-1}\otimes\mathcal{B}(\R^{d-1})$-measurable random function $F_{t-1}^{*}(\omega,y)$ such that, for any $Y_{t-1}\in L^0(\R^{d-1},\cF_{t-1})$, 
$$F_{t-1}^{*}(Y_{t-1}) = \esssup_{\cF_{t-1}}\left(\xi^1+ f(0, Y_{t-1})\right) =: F_{t-1}^{\xi^1,f}(Y_{t-1}),\, {\rm a.s..}$$
Moreover, $F_{t-1}^{*}(\omega,y)$ is l.s.c. in $y$ and if, in addition, $y\in \R^{d-1}\mapsto f(0,y)$ is a.s. convex, then $y\mapsto F_{t-1}^*(\omega,y)$ is a.s. convex.
\end{prop}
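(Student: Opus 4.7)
The plan is to reduce the statement to an application of Proposition \ref{esssup} by representing the $\cF$-measurable integrand $\xi^1(\omega) + f(\omega, (0,y))$ via a factorization that isolates the $\cF_{t-1}$-measurable part. Setting $G(\omega, y) := \xi^1(\omega) + f(\omega, (0, y))$, this is $\cF \otimes \cB(\R^{d-1})$-measurable and l.s.c. in $y$, and I would seek to write it as $G(\omega, y) = h(\omega, X(\omega), y)$, where $X \in L^0(\R^k, \cF)$ carries the $\cF$-randomness not already in $\cF_{t-1}$ and $h$ is $\cF_{t-1} \otimes \cB(\R^k) \otimes \cB(\R^{d-1})$-measurable and jointly l.s.c. in $(x,y)$. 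In the applications of the paper this factorization is explicit through the price process $S_t,\dots,S_T$; in the abstract setting $h$ is obtained by a Castaing-style decomposition of the random l.s.c. function $y \mapsto G(\omega, y)$ into $\cF_{t-1}$-measurable pieces, combined with a countable dense approximation on $\Q^{d-1}$.

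With the factorization fixed, I would define
$$F_{t-1}^*(\omega, y) := \sup_{x \in {\rm supp}_{\cF_{t-1}} X(\omega)} h(\omega, x, y).$$
Since ${\rm supp}_{\cF_{t-1}} X$ is an $\cF_{t-1}$-measurable random closed set, $F_{t-1}^*$ is $\cF_{t-1}\otimes \cB(\R^{d-1})$-measurable as a supremum of a jointly measurable function over a measurable random set. As a pointwise supremum of functions each l.s.c. in $y$, the map $y \mapsto F_{t-1}^*(\omega, y)$ is a.s. l.s.c. If $y \mapsto f(\omega, (0,y))$ is a.s. convex, then so is $y \mapsto h(\omega, x, y)$ for every $x$, and $F_{t-1}^*(\omega, \cdot)$ is a.s. convex as a supremum of convex functions.

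To verify the identity, I would fix $Y_{t-1} \in L^0(\R^{d-1}, \cF_{t-1})$ and set $\tilde h(\omega, x) := h(\omega, x, Y_{t-1}(\omega))$, which is $\cF_{t-1} \otimes \cB(\R^k)$-measurable and l.s.c. in $x$. Proposition \ref{esssup} applied to $\tilde h$ and $X$ yields
$$\esssup_{\cF_{t-1}} \tilde h(X) = \sup_{x \in {\rm supp}_{\cF_{t-1}} X} \tilde h(\omega, x) = F_{t-1}^*(\omega, Y_{t-1}(\omega)),$$
and since $\tilde h(\omega, X(\omega)) = G(\omega, Y_{t-1}(\omega)) = \xi^1 + f(0, Y_{t-1})$, the required identity follows a.s.

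The main obstacle I anticipate is the rigorous construction of the factorization $G = h(\cdot, X(\cdot), \cdot)$ with $h$ both jointly measurable and jointly l.s.c. in $(x,y)$, since the paper does not assume a Polish structure on $(\Omega, \cF)$. I would handle this by a countable aggregation: fix $\cF_{t-1}$-measurable representatives $\phi_q$ of $\esssup_{\cF_{t-1}}(\xi^1 + f(0, q))$ for $q \in \Q^{d-1}$, use the l.s.c. of $G$ in $y$ together with the Fatou-type inequality $\esssup_{\cF_{t-1}} \liminf_n X_n \leq \liminf_n \esssup_{\cF_{t-1}} X_n$ to show that $(\phi_q)_{q \in \Q^{d-1}}$ is l.s.c. along rational sequences a.s. after passing to a common null set, and then take $F_{t-1}^*$ to be a jointly $\cF_{t-1}\otimes\cB(\R^{d-1})$-measurable l.s.c. extension of this family to $\R^{d-1}$, which would bypass the explicit construction of $X$ and $h$.
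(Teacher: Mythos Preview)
Your proposal differs substantially from the paper's proof, and both of the routes you sketch have genuine gaps. For the factorization approach, you correctly identify the obstacle: without a Polish assumption on $(\Omega,\cF)$ there is no reason the $\cF$-randomness of $(\omega,y)\mapsto \xi^1(\omega)+f(\omega,(0,y))$ can be encoded by a finite-dimensional $X\in L^0(\R^k,\cF)$ with $h$ jointly l.s.c., so Proposition~\ref{esssup} is not applicable in the stated generality. Your fallback via rational aggregation is more serious: the l.s.c.\ extension of $(\phi_q)_{q\in\Q^{d-1}}$ need \emph{not} coincide with the essential supremum at irrational points. A two-point counterexample already shows this: take $\Omega=\{1,2\}$ with uniform measure, $\cF_{t-1}$ trivial, $\xi^1=0$, $d=2$, fix an irrational $y_0$, and set $f(1,(0,y))=0$, $f(2,(0,y))=1-1_{\{y=y_0\}}$, extended by cash invariance. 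Then $f$ is l.s.c.\ in $z$, $\phi_q=1$ for every rational $q$, so your l.s.c.\ extension is identically $1$, whereas $\esssup_{\cF_{t-1}}(\xi^1+f(0,y_0))=0$. The Fatou inequality you invoke gives only $F^*\ge \phi$; the reverse bound is exactly what fails.

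The paper avoids both issues by working with the \emph{epigraph} rather than with values of the essential supremum. It introduces
\[
\Lambda_{t-1}=\{(x,y)\in L^0(\R^d,\cF_{t-1}):\ f(-x,y)\le -\xi^1\}=\{(x,y):\ x\ge F_{t-1}^{\xi^1,f}(y)\},
\]
shows this set is closed in $L^0$ (by lower semicontinuity of $f$) and $\cF_{t-1}$-decomposable, and invokes \cite{LM} to obtain an $\cF_{t-1}$-measurable random closed set $\Gamma_{t-1}$ with $\Lambda_{t-1}=L^0(\Gamma_{t-1},\cF_{t-1})$. The function is then \emph{defined} by $F_{t-1}^*(\omega,y):=\inf\{x:(x,y)\in\Gamma_{t-1}(\omega)\}$. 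Lower semicontinuity in $y$ and convexity are read off from closedness and convexity of $\Gamma_{t-1}$, joint measurability from the graph of $\Gamma_{t-1}$, and the identity $F_{t-1}^*(Y_{t-1})=F_{t-1}^{\xi^1,f}(Y_{t-1})$ holds by construction for \emph{all} $Y_{t-1}\in L^0(\R^{d-1},\cF_{t-1})$, not just rational constants. This epigraph-via-decomposable-sets argument is the missing ingredient in your plan.
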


{\sl Proof.}  Consider the family of random variables:
\bean
    \Lambda_{t-1} &= \big\{(x_{t-1},y_{t-1})\in L^0(\R^{d},\mathcal{F}_{t-1}): f(-x_{t-1}, y_{t-1}) \leq -\xi^1\big\} \\
    &= \big\{(x_{t-1},y_{t-1})\in L^0(\R^{d},\mathcal{F}_{t-1}): x_{t-1}\geq  F_{t-1}^{\xi^1,f}(y_{t-1})\big\}.
\eean
Notice that $\Lambda_{t-1}$ is closed in $L^0$ since $f$ is l.s.c.. Moreover, $\Lambda_{t-1}$ is $\mathcal{F}_{t-1}$-decomposable, i.e. $g^1_{t-1}1_{A_{t-1}} + g^2_{t-1}1_{A_{t-1}^c}\in \Lambda_{t-1}$ if $g^1_{t-1}$ and $g^2_{t-1}$ belong to $\Lambda_{t-1}$ and  $A_{t-1}\in\mathcal{F}_{t-1}$. By \cite{LM}[Corollary 2.5], there exists an $\cF_{t-1}$-measurable random closed set $\Gamma_{t-1}$ such that  $\Lambda_{t-1} = L^0(\Gamma_{t-1},\mathcal{F}_{t-1})$. Moreover, there is a Castaing representation, i.e. a countable family $(z^n_{t-1})_{n\ge 1}\in \Lambda_{t-1}$ such that  $\Gamma_{t-1}(\omega) = \text{cl}\{z^n_{t-1}(\omega):n\geq 1\}$, $\omega \in \Omega$. We  define 
$$F^*_{t-1}(\omega,y) := \inf\{x\in\R: (x,y) \in\Gamma_{t-1}(\omega)\}.$$ We claim that $
    F^*_{t-1}(\omega,y) = \inf\big\{x\in\mathbb{Q}:(x,y)\in\Gamma_{t-1}(\omega)\big\}.$
Indeed, first we have $F^*_{t-1}(\omega,y) \leq \inf\big\{x\in\mathbb{Q}:(x,y)\in\Gamma_{t-1}(\omega)\big\}$. Moreover, in the case where $F^*_{t-1}(\omega,y)>-\infty$, for every $\epsilon > 0$, there exist $x\in\R$ such that $(x,y)\in\Gamma_{t-1}$ and $F^*_{t-1}(\omega,y)  +\epsilon \geq x$. Choose $\Tilde{x}\in\mathbb{Q}\cap[x,x+\epsilon]$. Observe that  $(\Tilde x,y)\in\Gamma_{t-1}$ as the $y$-sections of $\Lambda_{t-1}$ are upper sets.  We then have:
\bean
    &&F^*_{t-1}(\omega,y)+ 2\epsilon \geq  x +\epsilon \geq \Tilde{x},\\
    && F^*_{t-1}(\omega,y)\geq \Tilde{x} - 2\epsilon \geq \inf\big\{x\in\mathbb{Q}: (x,y)\in\Gamma_{t-1}(\omega)\big\}- 2\epsilon.
\eean
Since $\epsilon$ is arbitrary chosen, we conclude that 
$$F^*_{t-1}(\omega,y)= \inf\big\{x\in\mathbb{Q}: (x,y)\in\Gamma_{t-1}(\omega)\big\}.$$ Notice that when $F^*_{t-1}(\omega,y)=-\infty$, then we may choose $x\to -\infty$ so that we also have $\Tilde{x}\to -\infty$ and we conclude similary. We then deduce that $F_{t-1}^*(\omega,y)$ is $\mathcal{F}_{t-1}\otimes\mathcal{B}(\R^{d-1})$-measurable. Indeed, for every $c < +\infty$, we have: 
    \bean
        \big\{(\omega,y): F^*_{t-1}(\omega,y) \geq c \big\} = \bigcap_{x\in \mathbb{Q}}\big\{(\omega,y): x1_{(\omega,x,y)\in {\rm Graph}\Gamma_{t-1}}\ge c 1_{(\omega,x,y)\in {\rm Graph}\Gamma_{t-1}}\big\}.
    \eean
Since $\Gamma_{t-1}$ is graph-measurable, $\big\{(\omega,y): F^*_{t-1}(\omega,y) \geq c \big\}\in \mathcal{F}_{t-1}\otimes\mathcal{B}(\R^{d-1})$. We then conclude that $F_{t-1}^*$ is $\mathcal{F}_{t-1}\otimes\mathcal{B}(\R^{d-1})$-measurable. Moreover, if $f_t$ is convex, $\Gamma_{t-1}$ is convex a.s. and we deduce that $F_{t-1}^*(\omega,y)$ is convex in $y$ a.s..

Consider a sequence $y^n\in\R^{d-1}$ which converges to $y$ and let us denote $\beta^n:= F_{t-1}^*(\omega,y^n)$. We have $(\beta^n,y^n)\in \Gamma_{t-1}$ if  $\beta^n>-\infty$. If $\inf_n\beta^n = -\infty$, then, up to a subsequence, $F_{t-1}^*(\omega,y) - 1 >\beta^n$ for $n$ large enough, hence $(F_{t-1}^*(\omega,y) - 1,y^n)\in\Gamma_{t-1}(\omega)$ since the $y^n$-sections of $\Gamma_{t-1}$ are upper sets. As $n\to\infty$, we deduce that $(F_{t-1}^*(\omega,y) - 1,y)\in\Gamma_{t-1}(\omega)$, which contradicts the definition of $F_{t-1}^*$. Moreover it is trivial that $F_{t-1}^*(\omega,y)\leq\liminf_n\beta^n$ if $\liminf_n\beta^n=\infty$. Otherwise, $\beta^\infty:= \liminf_n\beta^n <\infty$ and $(\beta^\infty,y)\in\Gamma_{t-1}$ since $\Gamma_{t-1}$ is closed. It follows that $F_{t-1}^*(\omega,y) \leq\beta^\infty = \liminf_n\beta^n$ by the definition of $F_{t-1}^*$. We conclude that $F_{t-1}^*(\omega,x)$ is l.s.c. in $x$.

We show that $F_{t-1}^{\xi^1,f}(Y_{t-1}) = F^*_{t-1}(Y_{t-1})$ a.s. for all $Y_{t-1}\in L^0(\R^{d-1},\mathcal{F}_{t-1})$. We first restrict $\Omega$ to the $\cF_{t-1}$-measurable set $\{\omega:~\Gamma_{t-1}(\omega)\ne \emptyset\}$. We may then consider a measurable selection $(\tilde x_{t-1},\tilde y_{t-1})\in \Gamma_{t-1}\ne \emptyset$ a.s.. By definition, we have $\tilde x_{t-1} \geq F^*_{t-1}(\tilde y_{t-1})$. We deduce that $F^*_{t-1}(\tilde y_{t-1}) < \infty$ a.s.. We define:
\bean
    \widehat Y_{t-1} = \tilde y_{t-1}1_{F^*_{t-1}(Y_{t-1}) = \infty} + Y_{t-1}1_{F^*_{t-1}(Y_{t-1}) < \infty}.
\eean
Then:
\bean
    F^*_{t-1}(\widehat Y_{t-1}) = F^*_{t-1}(\tilde y_{t-1})1_{F^*_{t-1}(Y_{t-1}) = \infty} + F^*_{t-1}(Y_{t-1})1_{F^*_{t-1}(Y_{t-1}) < \infty}.
\eean
Observe that on the set $\{F^*_{t-1}(Y_{t-1}) < \infty\}$, $(F^*_{t-1}(\widehat Y_{t-1}),\widehat Y_{t-1})\in\Gamma_{t-1}$ a.s. since $\Gamma_{t-1}$ is closed. Therefore,  $(F^*_{t-1}(\widehat Y_{t-1}),\widehat Y_{t-1})\in\Lambda_{t-1}=L^0(\Gamma_{t-1},\cF_{t-1})$ and we deduce that $F^*_{t-1}(\widehat Y_{t-1})\geq F_{t-1}^{\xi^1,f}( \widehat Y_{t-1})$ a.s.. We conclude that on the set $\{F^*_{t-1}(Y_{t-1}) <\infty\}$, $F^*_{t-1}(Y_{t-1}) \geq F_{t-1}^{\xi^1,f}(Y_{t-1})$ while the inequality is trivial on the complementary set. On the other hand, let us define
\bean
    \widehat X_{t-1} &=& F_{t-1}^{\xi^1,f}(Y_{t-1})1_{F_{t-1}^{\xi^1,f}(Y_{t-1})<\infty} + F_{t-1}^{\xi^1,f}(\tilde y_{t-1})1_{F_{t-1}^{\xi^1,f}(Y_{t-1})=\infty},\\
    \widehat Y_{t-1}&=& Y_{t-1}1_{F_{t-1}^{\xi^1,f}(Y_{t-1})<\infty} +\tilde y_{t-1}1_{F_{t-1}^{\xi^1,f}(Y_{t-1})=\infty}.
 \eean

Observe that  $( \widehat  X_{t-1},  \widehat Y_{t-1})\in \Lambda_{t-1}$  hence  $F^*_{t-1}(\widehat Y_{t-1})\leq \widehat X_{t-1}$ by definition of $F^*_{t-1}$. Then, $F^*_{t-1}(Y_{t-1}) \leq \widehat X_{t-1}=F_{t-1}^{\xi^1,f}(Y_{t-1})$ on $\{F_{t-1}^{\xi^1,f}(Y_{t-1})<\infty\}$. The inequality is trivial on the complementary set so that we may conclude. \smallskip

On the set $\{\omega:~\Gamma_{t-1}(\omega)= \emptyset\}$, we have $F^*_{t-1}(Y_{t-1})=+\infty$. Moreover, if $F_{t-1}^{\xi^1,f}(Y_{t-1})<\infty$, we deduce that $(F_{t-1}^{\xi^1,f}(Y_{t-1}),Y_{t-1})\in \Gamma_{t-1}=\emptyset$ since $\xi^1+f(0,Y_{t-1})\le F_{t-1}^{\xi^1,f}(Y_{t-1})$. This is a contradiction hence $F_{t-1}^{\xi^1,f}(Y_{t-1})=+\infty$ and the conclusion follows. \fdem \smallskip

\begin{lemm}\label{CountableRepGamma} 

Suppose that Assumption \ref{assumption support} holds and consider an $\cF_{t-1}$-normal integrand $\gamma_t: (\omega,s,y):\Omega\times\R^m\times\R^d\mapsto \gamma_t(\omega,s,y)$.  Then, for any $V_{t-1}\in L^0(\R^d,\cF_{t-1})$, we have:
\begin{align*}
    \esssup_{\cF_{t-1}}\gamma_{t}(S_t,V_{t-1}) &= \sup_{s\in{\rm supp}_{\cF_{t-1}}S_t}\gamma_t(s,V_{t-1})= \sup_{m\ge 1}\gamma_t(\alpha_{t-1}^m(S_{t-1}),V_{t-1}).
\end{align*}
\end{lemm}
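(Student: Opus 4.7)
The plan is to split the claimed chain of equalities into two separate identifications and handle them in sequence, using the tools already gathered in the appendix.

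For the first equality $\esssup_{\cF_{t-1}}\gamma_{t}(S_t,V_{t-1}) = \sup_{s\in{\rm supp}_{\cF_{t-1}}S_t}\gamma_t(s,V_{t-1})$, the plan is to apply Proposition~\ref{esssup} directly, with $\cH=\cF_{t-1}$, $\cF=\cF_t$, $X=S_t$ and the $\cH\otimes\cB(\R^m)$-measurable kernel
\[
h(\omega,s) := \gamma_t(\omega,s,V_{t-1}(\omega)).
\]
To apply the proposition I must check two things: that $h$ is $\cF_{t-1}\otimes\cB(\R^m)$-measurable and that $h(\omega,\cdot)$ is l.s.c.\ in $s$ a.s.. Both follow from the normal-integrand hypothesis on $\gamma_t$ together with $\cF_{t-1}$-measurability of $V_{t-1}$: joint measurability in $(\omega,s,y)$ gives joint measurability in $(\omega,s)$ after plugging in $V_{t-1}(\omega)$, while lower semicontinuity of $\gamma_t(\omega,\cdot,\cdot)$ on $\R^m\times\R^d$ trivially yields lower semicontinuity in the first argument alone at the fixed value $y=V_{t-1}(\omega)$.

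For the second equality, the plan is to use Assumption~\ref{assumption support}, which supplies a Castaing-type representation $\mathrm{supp}_{\cF_{t-1}}S_t = {\rm cl}\{\alpha_{t-1}^m(S_{t-1}) : m\ge 1\}$, together with the lower semicontinuity of $s\mapsto \gamma_t(\omega,s,V_{t-1}(\omega))$ proved above. The inequality
\[
\sup_{m\ge 1}\gamma_t(\alpha_{t-1}^m(S_{t-1}),V_{t-1}) \le \sup_{s\in\mathrm{supp}_{\cF_{t-1}}S_t}\gamma_t(s,V_{t-1})
\]
is immediate since each $\alpha_{t-1}^m(S_{t-1})$ lies in $\mathrm{supp}_{\cF_{t-1}}S_t$. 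For the reverse inequality, I would pick any $s\in\mathrm{supp}_{\cF_{t-1}}S_t(\omega)$ and, by density, extract a subsequence $\alpha_{t-1}^{m_k}(S_{t-1}(\omega))\to s$; lower semicontinuity then gives
\[
\gamma_t(s,V_{t-1}) \le \liminf_{k\to\infty}\gamma_t(\alpha_{t-1}^{m_k}(S_{t-1}),V_{t-1}) \le \sup_{m\ge 1}\gamma_t(\alpha_{t-1}^m(S_{t-1}),V_{t-1}),
\]
and taking the supremum over $s$ on the left yields the desired inequality.

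There is no substantive obstacle: the hardest bookkeeping step is simply verifying the measurability and lower-semicontinuity hypotheses needed to invoke Proposition~\ref{esssup}, and that reduces to the standard fact that fixing one argument of a jointly lower-semicontinuous, jointly measurable function preserves both properties in the remaining argument. The second equality is a pure density-plus-l.s.c.\ argument, which is why the countable Castaing representation in Assumption~\ref{assumption support} is exactly the right hypothesis.
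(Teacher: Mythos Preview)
Your proposal is correct and follows essentially the same route as the paper: apply Proposition~\ref{esssup} to the kernel $(\omega,s)\mapsto\gamma_t(\omega,s,V_{t-1}(\omega))$ for the first equality, then use the Castaing representation together with lower semicontinuity for the second. The paper's proof is slightly terser but identical in substance.
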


{\sl Proof.}
As $(\omega,s)\mapsto \gamma_{t}(\omega,s,V_{t-1}(\omega))$ is an $\cF_{t-1}$-normal integrand under our assumptions,  the first equality holds by Proposition \ref{esssup}. It remains to observe that, if $s\in \text{supp}_{\cF_{t-1}}S_t$, then $s=\lim_m \alpha_{t-1}^m(S_{t-1})$ for a subsequence and, by lower semicontinuity, we deduce that $$\gamma_{t}(s,V_{t-1})\le \liminf_m \gamma_{t}^\xi(\alpha_{t-1}^m(S_{t-1})),V_{t-1})\le  \sup_{m\ge 1} \gamma_{t}^\xi(\alpha_{t-1}^m(S_{t-1})),V_{t-1}).$$ It follows that $\sup_{s\in\text{supp}_{\cF_{t-1}}S_t}\gamma_t(s,V_{t-1})\le \sup_{m\ge 1}\gamma_t(\alpha_{t-1}^m(S_{t-1}),V_{t-1})$ and, finally, the equality holds. \fdem

\subsection{Continuous set-valued functions}

For two topological vector spaces $X,Y$, consider a set-valued function $\phi: X \twoheadrightarrow Y$. We recall the definition of hemicontinuous set-valued mappings as formulated  in \cite{AB}.
\begin{defi}\label{def: lower hemi}
We say that $\phi$ is \textbf{lower hemicontinuous} at $x$ if for every open set $U\subset Y$ such that $\phi(x)\cap U\neq\emptyset$, there exits a neighborhood $V$ of $x$ such that $z\in V$ implies $\phi(x)\cap U \ne\emptyset$.
\end{defi}

\begin{defi}\label{def: upper hemi}
We say that $\phi$ is \textbf{upper hemicontinuous} at $x$ if for every open set $U\subset Y$ such that $\phi(x)\subseteq U$, there is a neighborhood $V$ of $x$ such that $z\in V$ implies $\phi(z)\subset U$.
\end{defi}
\begin{defi}\label{def: continuous}
We say that $\phi$ is \textbf{continuous} at $x$ if it is both upper and lower hemicontinuous at $x$. It is continuous if it is continuous at any point.
\end{defi}

\begin{lemm}\label{lem: uppercontinuous ball}
    Let $f:\R^k\to\R_+$ be an upper semicontinuous function. Then,  the mapping  $x\mapsto \bar{B}(0,f(x))$ is upper hemicontinuous  in the sense of definition \ref{def: upper hemi}.
\end{lemm}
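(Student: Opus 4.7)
The plan is to unpack the definition of upper hemicontinuity and exploit compactness of closed balls together with the upper semicontinuity of $f$. Fix $x \in \R^k$ and an open set $U \subseteq \R^k$ with $\bar B(0,f(x)) \subseteq U$. I want to produce a neighborhood $V$ of $x$ such that $\bar B(0,f(z)) \subseteq U$ for every $z \in V$.

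First I would handle the trivial case $U = \R^k$, where any neighborhood of $x$ works. Otherwise, the complement $U^{c}$ is a non-empty closed set and $\bar B(0,f(x))$ is a non-empty compact set (using $f(x) \ge 0$), so the distance
\[
\eta := d\bigl(\bar B(0,f(x)),\, U^{c}\bigr) > 0.
\]
In particular, the open enlargement $\{y \in \R^k : d(y, \bar B(0,f(x))) < \eta\}$ sits inside $U$, and it contains $\bar B(0, f(x) + \eta/2)$.

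Next I invoke the upper semicontinuity of $f$ at $x$: for the number $\eta/2 > 0$, there is a neighborhood $V$ of $x$ such that $f(z) < f(x) + \eta/2$ for every $z \in V$. Then for any $z \in V$ and any $y \in \bar B(0, f(z))$, we have $|y| \le f(z) < f(x) + \eta/2$, so $y \in \bar B(0, f(x) + \eta/2) \subseteq U$. Hence $\bar B(0, f(z)) \subseteq U$ for every $z \in V$, which is the definition of upper hemicontinuity at $x$. Since $x$ was arbitrary, the set-valued map is upper hemicontinuous.

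I do not expect any real obstacle: the only subtle point is making sure that $\bar B(0,f(x)) \subseteq U$ open with $\bar B(0,f(x))$ compact forces a positive ``safety margin'' $\eta$, which then matches precisely with the quantitative form of upper semicontinuity. Everything else is bookkeeping.
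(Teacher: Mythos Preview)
Your argument is correct and follows essentially the same route as the paper: both extract a positive margin $\epsilon$ (your $\eta/2$) from compactness of $\bar B(0,f(x))$ so that $\bar B(0,f(x)+\epsilon)\subseteq U$, and then use upper semicontinuity of $f$ to confine $f(z)$ within that margin on a neighborhood of $x$. The only cosmetic difference is that the paper writes ``w.l.o.g.\ $U$ bounded'' where you instead split off the case $U=\R^k$ and compute the distance to $U^{c}$ explicitly.
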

\begin{proof}
The upper hemicontinuity is simple to check. Indeed, consider  an open set in $U\subseteq \R^k$, such that   $\phi(x)=\bar{B}(0,f(x))\subset U$. We may suppose that $U$ is bounded w.l.o.g. and  we deduce $\epsilon>0$ such that $\bar{B}(0,f(x)+\epsilon)\subset U$. By upper semicontinuity, there exists an open set $V$ containing $x$ such that $z\in V$ implies $f(z)\le f(x)+\epsilon$ hence $\phi(z)\subseteq U$. \end{proof}

\begin{lemm}\label{lem: lowercontinuous ball}
    Let $f:\R^k\to\R_+$ be a lower semicontinuous function. Then,  the mapping  $x\mapsto \bar{B}(0,f(x))$ is lower hemicontinuous  in the sense of definition \ref{def: lower hemi}.
\end{lemm}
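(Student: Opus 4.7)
The plan is to verify the defining condition of lower hemicontinuity directly, namely: for any open $U\subseteq\R^k$ with $\bar B(0,f(x))\cap U\ne\emptyset$, produce a neighborhood $V$ of $x$ such that $\bar B(0,f(z))\cap U\ne\emptyset$ for every $z\in V$. I would fix such a $U$ and pick some $y\in \bar B(0,f(x))\cap U$, so $|y|\le f(x)$.

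The strategy splits into two cases according to whether $|y|<f(x)$ or $|y|=f(x)$. In the strict case $|y|<f(x)$, the lower semicontinuity of $f$ at $x$ applied with $\varepsilon:=(f(x)-|y|)/2>0$ yields a neighborhood $V$ of $x$ on which $f(z)>f(x)-\varepsilon>|y|$; hence $y\in\bar B(0,f(z))\cap U$ for every $z\in V$, and we are done. The interesting case is the boundary case $|y|=f(x)$; here I would first perturb $y$ inside $U$ so as to decrease its norm strictly. If $f(x)=0$, then $y=0\in U$ and the condition $0\in\bar B(0,f(z))$ is automatic (since $f(z)\ge 0$), so $0\in\phi(z)\cap U$ for all $z$. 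If $f(x)>0$, pick $\delta>0$ with $B(y,\delta)\subseteq U$ (possible since $U$ is open), and set $y':=(1-\eta)y$ with $\eta>0$ small enough that $\eta f(x)<\delta$. Then $y'\in B(y,\delta)\subseteq U$ and $|y'|=(1-\eta)f(x)<f(x)$, so we are reduced to the strict case applied to $y'$.

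The main obstacle is precisely this boundary situation $|y|=f(x)$: lower semicontinuity of $f$ only provides the asymptotic inequality $\liminf_{z\to x} f(z)\ge f(x)$, and without strict inequality we cannot guarantee $y\in\bar B(0,f(z))$ for $z$ near $x$. The fix is the radial contraction $y\mapsto (1-\eta)y$, which exploits the star-shapedness of closed balls about the origin to move $y$ strictly inside $\bar B(0,f(x))$ while remaining in the open set $U$. This single observation makes the argument work, and combining it with the straightforward $|y|<f(x)$ case and the trivial $f(x)=0$ case completes the proof.
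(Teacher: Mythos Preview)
Your proof is correct. Both arguments verify Definition~\ref{def: lower hemi} directly and use the lower semicontinuity of $f$ to obtain an $\varepsilon$-margin, but the paper organizes the computation differently. Rather than taking a general open set $U$ and splitting into the cases $|y|<f(x)$, $|y|=f(x)>0$, and $f(x)=0$, the paper tests only on basic open balls $B(y,r)$ and uses the clean equivalence $\bar B(0,f(x))\cap B(y,r)\ne\emptyset \iff f(x)+r>|y|$. Because this characterization is already a \emph{strict} inequality, one immediately gets $\epsilon>0$ with $f(x)-\epsilon+r>|y|$, and the l.s.c.\ of $f$ finishes the argument without any case distinction. Your radial contraction $y\mapsto(1-\eta)y$ is doing by hand what the ball radius $r>0$ does automatically in the paper's version: it creates the strict room needed to absorb the l.s.c.\ estimate. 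Your route has the virtue of working with arbitrary open $U$ from the start and of making the role of star-shapedness explicit, while the paper's route is shorter and avoids the boundary case entirely.
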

\begin{proof}
For any ball $B(y,r)\in\R^k$, we have $\bar{B}(0,f(x))\cap B(y,r)\neq\emptyset$ if and only if $f(x) + r > |y|$. We also have $f(x)-\epsilon + r > |y|$ for some small $\epsilon>0$. As $f$ is l.s.c., we deduce that $f(z)\ge f(x)-\epsilon$ for every $z$ in some neighborhood $V$ of $x$. This implies that $f(z) + r > |y|$, i.e. $\bar{B}(0,f(x))\cap B(y,r)\neq\emptyset$ for every $z\in V$.  The conclusion follows.
\end{proof}

\begin{coro}\label{ContBall}
 Let $f:\R^k\to\R_+$ be a continuous function. Then,  the mapping  $x\mapsto \bar{B}(0,f(x))$ is continuous  in the sense of Definition \ref{def: continuous}.
\end{coro}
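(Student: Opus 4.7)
The plan is to observe that this is an immediate consequence of the two preceding lemmas. By Definition \ref{def: continuous}, a set-valued map is continuous at a point if it is both upper and lower hemicontinuous there. Since $f:\R^k\to\R_+$ is continuous, it is simultaneously upper semicontinuous and lower semicontinuous.

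First, I would invoke Lemma \ref{lem: uppercontinuous ball} using the upper semicontinuity of $f$ to conclude that $x\mapsto \bar B(0,f(x))$ is upper hemicontinuous. Then I would invoke Lemma \ref{lem: lowercontinuous ball} using the lower semicontinuity of $f$ to conclude that the same map is lower hemicontinuous. Combining the two immediately yields continuity of the set-valued map in the sense of Definition \ref{def: continuous}.

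There is no substantial obstacle here; the entire content of the corollary is already packaged in the two previous lemmas, and the result is just the conjunction of their conclusions under the stronger hypothesis that $f$ is continuous rather than merely semicontinuous on one side.
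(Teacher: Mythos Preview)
Your proposal is correct and matches the paper's approach exactly: the paper states this as a corollary with no explicit proof, since it follows immediately from combining Lemma~\ref{lem: uppercontinuous ball} and Lemma~\ref{lem: lowercontinuous ball} under the continuity hypothesis on $f$.
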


\begin{lemm} \label{lower hemicontinuous support}
 Consider the set-valued mapping $\alpha:\R^{m}\twoheadrightarrow \R^{m}$ defined by $\alpha(s) =  {\rm cl}\{\alpha^m(s),m\in\N\}$ where $(\alpha^m)_{m\ge 1}$ are continuous functions. Then, $\alpha$ is  lower hemicontinuous.\end{lemm}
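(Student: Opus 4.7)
The plan is to verify the lower hemicontinuity directly from Definition \ref{def: lower hemi} by exploiting two ingredients: the fact that each $\alpha^m(s)$ is a selection of $\alpha(s)$, and the continuity of the individual functions $\alpha^m$.

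First, fix an arbitrary point $s_0 \in \R^{m}$ and let $U \subseteq \R^{m}$ be an open set with $\alpha(s_0) \cap U \neq \emptyset$. The goal is to produce a neighborhood $V$ of $s_0$ such that $\alpha(z) \cap U \neq \emptyset$ for every $z \in V$. Pick some $y \in \alpha(s_0) \cap U$. By the definition $\alpha(s_0) = {\rm cl}\{\alpha^m(s_0):~m \in \N\}$, there is a subsequence $(\alpha^{m_k}(s_0))_{k \geq 1}$ converging to $y$. Since $U$ is open and $y \in U$, for $k$ large enough we have $\alpha^{m_k}(s_0) \in U$; fix one such index $m_0$ so that $\alpha^{m_0}(s_0) \in U$.

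Next, invoke the continuity of the function $\alpha^{m_0}$. Since $\alpha^{m_0}$ is continuous at $s_0$ and $U$ is an open neighborhood of $\alpha^{m_0}(s_0)$, the preimage $V := (\alpha^{m_0})^{-1}(U)$ is an open neighborhood of $s_0$. For every $z \in V$, the point $\alpha^{m_0}(z)$ belongs to $U$, and it also belongs to $\{\alpha^m(z):~m \in \N\} \subseteq \alpha(z)$. Hence $\alpha^{m_0}(z) \in \alpha(z) \cap U$, which shows $\alpha(z) \cap U \neq \emptyset$ on $V$. This verifies the lower hemicontinuity at $s_0$, and since $s_0$ was arbitrary, $\alpha$ is lower hemicontinuous on $\R^{m}$.

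No serious obstacle is expected: the closure in the definition of $\alpha(s_0)$ is handled simply by choosing a nearby countable selection lying in the open set $U$, and the rest is a one-line application of continuity of a single $\alpha^{m_0}$. No measurable selection or topological machinery beyond the elementary definition of continuity is required.
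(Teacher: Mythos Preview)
Your proof is correct and follows essentially the same approach as the paper: both arguments reduce the condition $\alpha(s_0)\cap U\neq\emptyset$ to the existence of some index $m_0$ with $\alpha^{m_0}(s_0)\in U$ (you justify this step more carefully via a convergent subsequence and openness of $U$, while the paper states it directly), and then use continuity of the single function $\alpha^{m_0}$ to obtain the required neighborhood $V$.
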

\begin{proof}
Consider $\omega\in \Omega$ and  some open set $U\in\R^{d}$. We have $\alpha_t(\omega, z)\cap U\neq\emptyset$ if and only if there is  $m\in\N$ such that $\alpha_t^m(\omega,z)\in U$. Since  $\alpha_t^m(\omega,.)$ is continuous, we deduce that there exists an open neighborhood $V$ of $z$ such that $\alpha_t^m(\omega,x)\in U$ for any $x\in V$. The conclusion follows.
\end{proof}

We recall a result from \cite{AB}[Theorem 17.31].
\begin{prop}\label{prop:continuous value}
Let $\phi: \R^k\twoheadrightarrow\R^m$ be a continuous set-valued mapping with nonempty compact values and suppose that  $f:\R^k\times\R^m\to\R$ is continuous. Then, the function $m(x) = \inf_{y\in\phi(x)}f(x,y)$ and the function $M(x) = \sup_{y\in\phi(x)}f(x,y)$  are continuous.
\end{prop}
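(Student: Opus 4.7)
The plan is to prove continuity of $M$; the continuity of $m$ then follows by applying the same argument to $-f$ (noting $m(x)=-\sup_{y\in\phi(x)}(-f(x,y))$). Continuity of $M$ is established by proving upper and lower semicontinuity separately at an arbitrary $x\in\R^k$.

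For upper semicontinuity, take $x_n\to x$. Since $\phi(x_n)$ is nonempty compact and $f$ is continuous, there exists $y_n\in\phi(x_n)$ with $M(x_n)=f(x_n,y_n)$. To control $(y_n)$, choose any bounded open neighborhood $U$ of the compact set $\phi(x)$; by upper hemicontinuity of $\phi$ at $x$, there is a neighborhood $V$ of $x$ with $\phi(z)\subseteq U$ for all $z\in V$. Thus $(y_n)$ eventually lies in $\bar U$, which is compact, so along a subsequence $y_n\to y^\star$. Upper hemicontinuity together with compact values implies that $\phi$ has closed graph, hence $y^\star\in\phi(x)$; continuity of $f$ then yields $M(x_n)=f(x_n,y_n)\to f(x,y^\star)\le M(x)$, so $\limsup_n M(x_n)\le M(x)$.

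For lower semicontinuity, fix $y\in\phi(x)$ and $\epsilon>0$. By continuity of $f$ at $(x,y)$, there is a product neighborhood $W\times O$ of $(x,y)$ such that $f(x',y')>f(x,y)-\epsilon$ for all $(x',y')\in W\times O$. Since $y\in\phi(x)\cap O$, lower hemicontinuity of $\phi$ at $x$ provides a neighborhood $V\subseteq W$ of $x$ such that $\phi(z)\cap O\neq\emptyset$ for every $z\in V$. Picking $y_z\in\phi(z)\cap O$ gives $M(z)\ge f(z,y_z)>f(x,y)-\epsilon$, so $\liminf_{z\to x}M(z)\ge f(x,y)-\epsilon$. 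Taking the supremum over $y\in\phi(x)$ and letting $\epsilon\to 0$, we obtain $\liminf_{z\to x}M(z)\ge M(x)$.

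The main obstacle is the extraction of a convergent subsequence of maximizers in the upper-semicontinuity step: it relies on two standard but non-trivial facts about correspondences with compact values, namely that upper hemicontinuity provides local boundedness and that it implies closed graph. Both are classical in the theory of set-valued maps (see \cite{AB}, Chapter 17); once available, the rest of the argument is a routine unwinding of the definitions of hemicontinuity and of continuity of $f$.
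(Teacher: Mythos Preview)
The paper does not give its own proof of this proposition; it simply records it as a citation of \cite{AB}, Theorem~17.31 (Berge's Maximum Theorem). Your argument is a correct, self-contained proof of this classical result and follows the standard route: upper hemicontinuity with compact values yields local boundedness and closed graph, whence upper semicontinuity of $M$ via extraction of maximizers, while lower hemicontinuity gives lower semicontinuity of $M$ directly.

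One small point worth tightening in the upper-semicontinuity step: the convergence $M(x_n)=f(x_n,y_n)\to f(x,y^\star)$ holds only along the subsequence where $y_n\to y^\star$, so the conclusion $\limsup_n M(x_n)\le M(x)$ for the full sequence is not immediate from what you wrote. The clean fix is to first pass to a subsequence realizing $\limsup_n M(x_n)$, and then run your extraction on that subsequence; this shows every subsequential limit of $M(x_n)$ is at most $M(x)$, which is what you need.
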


\begin{prop}\label{prop:lsc value}
Let $\phi: \R^k\twoheadrightarrow\R^m$ be an upper hemicontinous set-valued mapping with nonempty compact values and suppose that  $f:\R^k\times\R^m\to\R$ is lower semicontinuous.  Then, the function $m(x) = \inf_{y\in\phi(x)}f(x,y)$ is l.s.c.
\end{prop}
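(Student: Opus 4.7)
My plan is to prove the l.s.c.\ of $m$ directly at an arbitrary point $x_0 \in \R^k$ by showing $\liminf_{x \to x_0} m(x) \ge m(x_0)$. Let $(x_n)_{n \ge 1}$ be a sequence converging to $x_0$ such that $\lim_n m(x_n) = \liminf_{x \to x_0} m(x) =: L$. Since $\phi(x_n)$ is nonempty and compact and since $y \mapsto f(x_n, y)$ is l.s.c., the infimum defining $m(x_n)$ is attained at some $y_n \in \phi(x_n)$, i.e.\ $m(x_n) = f(x_n, y_n)$.

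The key step is to extract a cluster point $y^\ast$ of $(y_n)$ lying in $\phi(x_0)$. For this, I would fix a bounded open neighborhood $U$ of the compact set $\phi(x_0)$; upper hemicontinuity of $\phi$ (Definition \ref{def: upper hemi}) gives $\phi(x_n) \subseteq U$ for $n$ large, so $(y_n)$ is eventually bounded and admits a convergent subsequence $y_{n_j} \to y^\ast$. To see $y^\ast \in \phi(x_0)$, I argue by contradiction: if $y^\ast \notin \phi(x_0)$, then $\delta := d(y^\ast, \phi(x_0)) > 0$ by compactness of $\phi(x_0)$, so the open set $U_\delta := \{y : d(y, \phi(x_0)) < \delta/2\}$ contains $\phi(x_0)$ but not $y^\ast$. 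By upper hemicontinuity, $\phi(x_n) \subseteq U_\delta$ eventually, hence $y_{n_j} \in U_\delta$ eventually, contradicting $y_{n_j} \to y^\ast \notin U_\delta$.

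Once $y^\ast \in \phi(x_0)$ is established, the joint lower semicontinuity of $f$ at $(x_0, y^\ast)$ gives
\begin{align*}
m(x_0) \le f(x_0, y^\ast) \le \liminf_j f(x_{n_j}, y_{n_j}) = \liminf_j m(x_{n_j}) = L,
\end{align*}
which is the desired inequality. The main obstacle is the cluster point extraction, as upper hemicontinuity with compact values is precisely the hypothesis needed both to obtain boundedness of $(y_n)$ and to keep the limit inside $\phi(x_0)$; without the compactness of $\phi(x_0)$ the neighborhood $U_\delta$ argument would fail. The rest is a routine application of the l.s.c.\ of $f$.
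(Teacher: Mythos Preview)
Your argument is correct. The extraction of a cluster point $y^\ast\in\phi(x_0)$ via upper hemicontinuity and compactness is exactly what is needed, and the final chain of inequalities using the joint lower semicontinuity of $f$ is clean. One could tighten the phrasing in the contradiction step: from $y_{n_j}\in U_\delta$ for all large $j$ one gets $d(y_{n_j},\phi(x_0))<\delta/2$, hence in the limit $d(y^\ast,\phi(x_0))\le\delta/2<\delta$, contradicting $d(y^\ast,\phi(x_0))=\delta$. This is implicit in what you wrote but worth making explicit, since merely $y^\ast\notin U_\delta$ would not by itself contradict $y_{n_j}\in U_\delta$.

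The paper takes a different and much shorter route: it writes $m(x)=-\sup_{y\in\phi(x)}g(x,y)$ with $g=-f$ upper semicontinuous, and then invokes \cite[Lemma 17.30]{AB}, which states that the supremum of an u.s.c.\ function over an upper hemicontinuous compact-valued correspondence is u.s.c. Your proof is essentially a self-contained derivation of (the negated form of) that lemma. What the paper's approach buys is brevity at the cost of an external reference; what your approach buys is a fully elementary argument that exposes precisely where each hypothesis (nonempty compact values, upper hemicontinuity, joint l.s.c.\ of $f$) is used.
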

{\sl Proof.} We have $m(x)=-\sup_{y\in\phi(x)}g(x,y)$ where $g=-f$ is upper semicontinuous. By  \cite{AB}[Lemma 17.30], the mapping $x\mapsto \sup_{y\in\phi(x)}g(x,y)$ is upper semicontinuous hence $m$ is l.s.c. \fdem

\begin{lemm}\label{lem: lsc envelope}
 Let ${\rm cl}f$ be the l.s.c. regularization of the function $f:\R^k\to\R$ ( i.e. the greatest l.s.c. function dominated by $f$). Suppose that $f$ is l.s.c. on some open set $O\subset\R^k$, then $f(\bar x) = {\rm cl}f(\bar x)$ for any $\bar x\in O$.
\end{lemm}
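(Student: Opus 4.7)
The plan is to combine the standard characterization of the lower semicontinuous envelope as a liminf with the hypothesis that $f$ is already l.s.c. on the open set $O$.

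First I would recall (or quickly establish in one line) that the greatest l.s.c. function dominated by $f$ admits the pointwise formula
$$({\rm cl}f)(\bar x) \;=\; \sup_{U \ni \bar x, \, U \text{ open}} \;\inf_{x \in U} f(x) \;=\; \liminf_{x \to \bar x} f(x).$$
Indeed, the right-hand side defines an l.s.c. function lying below $f$ (since $\bar x$ itself belongs to any such $U$, so $\inf_U f \le f(\bar x)$), and any l.s.c. $g$ with $g \le f$ satisfies $g(\bar x) = \liminf_{x \to \bar x} g(x) \le \liminf_{x \to \bar x} f(x)$. Hence this liminf is the greatest l.s.c. minorant of $f$, i.e.\ exactly ${\rm cl}f$.

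Given this, the inequality $({\rm cl}f)(\bar x) \le f(\bar x)$ is automatic. For the reverse inequality at $\bar x \in O$, I would fix an arbitrary $\alpha < f(\bar x)$. Since $O$ is open and $f$ is l.s.c. at $\bar x \in O$, there exists an open neighborhood $V$ of $\bar x$ with $V \subseteq O$ such that $f(x) > \alpha$ for every $x \in V$. Then $\inf_{x \in V} f(x) \ge \alpha$, and since $V$ is one admissible neighborhood in the defining supremum, $({\rm cl}f)(\bar x) \ge \alpha$. Letting $\alpha \nearrow f(\bar x)$ yields $({\rm cl}f)(\bar x) \ge f(\bar x)$, and combining both inequalities gives the claim.

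There is essentially no real obstacle: the whole content of the lemma is that openness of $O$ allows the neighborhoods appearing in the liminf at $\bar x$ to be shrunk inside $O$, where the l.s.c.\ hypothesis on $f$ becomes directly applicable. The only mild point to verify is that using neighborhoods $U \subseteq O$ in the defining supremum does not change its value, which is immediate since the supremum is over \emph{all} open neighborhoods and those contained in $O$ form a cofinal subfamily at $\bar x$.
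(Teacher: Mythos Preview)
Your proof is correct. It differs from the paper's argument, though both are short.

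The paper does not invoke the liminf formula. Instead it constructs a global competitor: set $g := ({\rm cl}f)\,1_{O^c} + f\,1_{O}$, checks that $g$ is l.s.c.\ (using that $O$ is open, that $f$ is l.s.c.\ on $O$, and that $g \ge {\rm cl}f$ everywhere since $f \ge {\rm cl}f$) and that $g \le f$ everywhere, and then uses the maximality of ${\rm cl}f$ among l.s.c.\ minorants of $f$ to conclude $g \le {\rm cl}f$. Reading this on $O$ gives $f \le {\rm cl}f$ there, hence equality.

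Your route is more pointwise: you fix $\bar x \in O$ and work directly with the characterization $({\rm cl}f)(\bar x) = \sup_{U \ni \bar x}\inf_U f$, exploiting that the neighborhoods contained in $O$ are cofinal at $\bar x$. This avoids having to verify global lower semicontinuity of a glued function, at the modest cost of first recalling (or proving) the liminf formula. The paper's argument, by contrast, uses only the defining extremal property of ${\rm cl}f$ and nothing else, which is slightly cleaner if one does not want to appeal to the liminf representation. Either way the substantive content is the same: openness of $O$ decouples the behavior of $f$ on $O$ from its behavior on $O^c$.
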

\begin{proof}
We define $g(x) := {\rm cl}f(x)1_{O^c}(x) + f(x)1_{O}(x)$. As ${\rm cl}f\le f$ and $O$ is open, we deduce that  $g$ is l.s.c. and $g\leq f$. By definition of ${\rm cl}f$, we have $g\le {\rm cl}f$. This implies that $f(\bar x) \le {\rm cl}f(\bar x) \le f(\bar x)$ for any $\bar x\in O$. The conclusion follows.
\end{proof}

\subsection{Auxiliary  results}

\begin{lemm}\label{CountSupp} Suppose that there is a family of  $\cF_{t-1}$-measurable random variables  $(\alpha_{t-1}^m)_{m\ge 1}$ such that $S_t\in \{\alpha_{t-1}^m:~m\ge 1\}$ a.s. and suppose that $P(S_t=\alpha_{t-1}^m|\cF_{t-1})>0$ a.s. for all $m\ge 1$. Then, for any $\cF_{t-1}$-measurable random function $f:\Omega\times\R^d\to\R$,
\begin{align*}
    \esssup_{\cF_{t-1}}f(S_t) = \sup_{m\geq 1}f(\alpha_{t-1}^m).
\end{align*}
\end{lemm}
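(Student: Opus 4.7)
\bigskip

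\noindent\textbf{Proof plan for Lemma~\ref{CountSupp}.}

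The plan is to prove the two inequalities separately, using only (i) the hypothesis $S_t\in\{\alpha_{t-1}^m:m\ge 1\}$ a.s., (ii) the hypothesis $P(S_t=\alpha_{t-1}^m\mid\cF_{t-1})>0$ a.s. for every fixed $m$, and (iii) the defining properties of the conditional essential supremum. First, I would denote $G:=\sup_{m\ge 1}f(\alpha_{t-1}^m)$; as a countable supremum of $\cF_{t-1}$-measurable random variables, $G$ is itself $\cF_{t-1}$-measurable. Write also $H:=\esssup_{\cF_{t-1}}f(S_t)$, which is the smallest $\cF_{t-1}$-measurable random variable dominating $f(S_t)$.

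For the inequality $H\le G$, observe that, on the almost sure set $\{S_t\in\{\alpha_{t-1}^m:m\ge 1\}\}$, one has $f(\omega,S_t(\omega))\in\{f(\omega,\alpha_{t-1}^m(\omega)):m\ge 1\}$, hence $f(S_t)\le G$ a.s.. Since $G$ is $\cF_{t-1}$-measurable, the definition of the conditional essential supremum yields $H\le G$ a.s..

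For the reverse inequality $G\le H$, it is enough, by countability, to show that $f(\alpha_{t-1}^m)\le H$ a.s.\ for each fixed $m$. Suppose, for contradiction, that for some $m$ the set $B_m:=\{f(\alpha_{t-1}^m)>H\}$, which lies in $\cF_{t-1}$, has positive probability. By hypothesis $P(S_t=\alpha_{t-1}^m\mid\cF_{t-1})>0$ a.s., so
\begin{align*}
  P\bigl(B_m\cap\{S_t=\alpha_{t-1}^m\}\bigr)
  =\E\bigl[\mathbf 1_{B_m}\,P(S_t=\alpha_{t-1}^m\mid\cF_{t-1})\bigr]>0.
\end{align*}
On the event $B_m\cap\{S_t=\alpha_{t-1}^m\}$ we have $f(S_t)=f(\alpha_{t-1}^m)>H$, contradicting the fact that $f(S_t)\le H$ a.s.. Hence $P(B_m)=0$ for every $m$, and taking the countable supremum gives $G\le H$ a.s..

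The only delicate point is the second direction, where one must use (ii) to transfer an $\cF_{t-1}$-measurable estimate on $f(\alpha_{t-1}^m)$ to one on $f(S_t)$; once the tower-type computation $P(B_m\cap\{S_t=\alpha_{t-1}^m\})=\E[\mathbf 1_{B_m}P(S_t=\alpha_{t-1}^m\mid\cF_{t-1})]$ is written down, the contradiction is immediate. No measurability beyond the $\cF_{t-1}\otimes\cB(\R^d)$-measurability of $f$ (to make $f(\alpha_{t-1}^m)$ and $f(S_t)$ measurable) is needed, and no lower semicontinuity of $f$ in the spatial variable is used, in contrast with Lemma~\ref{CountableRepGamma}.
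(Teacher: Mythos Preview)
Your proof is correct and follows essentially the same approach as the paper: the direction $H\le G$ is identical, and for $G\le H$ both arguments exploit the tower property together with $P(S_t=\alpha_{t-1}^m\mid\cF_{t-1})>0$ a.s.\ to deduce $f(\alpha_{t-1}^m)\le H$ a.s.\ for each $m$. The only cosmetic difference is that the paper argues directly (multiply $H\ge f(S_t)$ by $1_{\{S_t=\alpha_{t-1}^m\}}$, take $\cF_{t-1}$-conditional expectation, then divide by the positive conditional probability), whereas you phrase the same computation as a contradiction on the $\cF_{t-1}$-measurable set $B_m$.
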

\begin{proof}
It is clear that $\esssup_{\cF_{t-1}}f(S_t) \leq \sup_{m\geq 1}f(\alpha_{t-1}^m)\text{ a.s.}$ since $S_t$ belongs to $\{\alpha_{t-1}^m:~m\ge 1\}$ and $\sup_{m\geq 1}f(\alpha_{t-1}^m)$ is $\cF_{t-1}$-measurable by assumption. On the other hand, consider $\Gamma_t^m:= \{S_t\in \alpha_{t-1}^m\}\in \cF_t$. We have:
\begin{align*}
    \esssup_{\cF_{t-1}}f(S_t)1_{\Gamma_t^m}\ge f(S_t)1_{\Gamma_t^m}  \geq f(\alpha_{t-1}^m)1_{\Gamma_t^m}\text{ a.s..}
\end{align*}
Taking the conditional expectation, we get that
\bean
    &&E(\esssup_{\cF_{t-1}}f(S_t)1_{\Gamma_t^m}|\cF_{t-1}) \geq E(f(\alpha_{t-1}^m)1_{\Gamma_t^m}|\cF_{t-1})\text{ a.s.},\\
    &&\esssup_{\cF_{t-1}}f(S_t)P(\Gamma_t^m|\cF_{t-1})) \geq f(\alpha_{t-1}^m)P(\Gamma_t^m|\cF_{t-1})) \text{ a.s..}
\eean
As $P(\Gamma_t^m|\cF_{t-1}))>0$ by assumption, we get that $\esssup_{\cF_{t-1}}f(S_t)\geq f(\alpha_{t-1}^m)$ a.s. for any $m\geq 1$ so that the reverse inequality holds.
\end{proof}

\begin{lemm} \label{Delta-homogD} Let $D^{0}$ given by (\ref{D1}) with $\xi=0$. Suppose that $\C$ is positively super $\delta$-homogeneous. For any $t\le T$, and any $\lambda_t\in L^0([1,\infty),\cF_t)$, we have $D_t^0(\lambda_t V_{t-1},\lambda_t V_{t})\ge \delta(\lambda_t)D_t^0(V_{t-1}, V_{t})$ and $\gamma^0_t(\lambda_t V_{t-1})\ge \delta(\lambda_t)\gamma^0_t(V_{t-1}) $ for all $(V_{t-1}, V_{t})\in L^0(\R^d,\cF_t)\times L^0(\R^d,\cF_t)$.
\end{lemm}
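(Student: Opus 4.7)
The plan is to proceed by backward induction on $t$, from $T$ down to $0$, simultaneously establishing both claimed inequalities (together with an auxiliary inequality for $\theta^0_t$). Two observations drive everything: since $\lambda_t(\omega) \geq 1$ a.s., the super $\delta$-homogeneity of $\C_t$ applies pointwise in $\omega$ with scalar $\lambda_t(\omega)$; and since $\lambda_t$ is $\cF_t$-measurable with $\delta(\lambda_t)\geq 0$, the factor $\delta(\lambda_t)$ commutes with $\esssup_{\cF_t}$ and $\essinf_{\cF_t}$.

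For the base case $t=T$, I would use that $\gamma^0_T(V_{T-1}) = \C_T((0,-V_{T-1}^{(2)}))$. Since $\lambda_T\cdot(0,-V_{T-1}^{(2)}) = (0,-\lambda_T V_{T-1}^{(2)})$, the super $\delta$-homogeneity of $\C_T$ applied a.s.\ immediately yields $\gamma^0_T(\lambda_T V_{T-1})\geq \delta(\lambda_T)\gamma^0_T(V_{T-1})$.

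For the inductive step, assuming the inequality holds for $\gamma^0_{t+1}$, I first derive the analogous inequality for $\theta^0_t$: since $\cF_t\subseteq\cF_{t+1}$, the induction hypothesis applies with multiplier $\lambda_t$, and taking $\esssup_{\cF_t}$ on both sides of $\gamma^0_{t+1}(\lambda_t V_t)\geq \delta(\lambda_t)\gamma^0_{t+1}(V_t)$ (factoring out the nonnegative $\cF_t$-measurable $\delta(\lambda_t)$) gives $\theta^0_t(\lambda_t V_t)\geq \delta(\lambda_t)\theta^0_t(V_t)$. Combining this with super $\delta$-homogeneity of $\C_t$ applied to the vector $(0,V_t^{(2)}-V_{t-1}^{(2)})$ scaled by $\lambda_t$ yields
\[
D_t^0(\lambda_t V_{t-1},\lambda_t V_t)
= \C_t((0,\lambda_t(V_t^{(2)}-V_{t-1}^{(2)}))) + \theta^0_t(\lambda_t V_t)
\geq \delta(\lambda_t)\,D_t^0(V_{t-1},V_t),
\]
which is the first claim at time $t$.

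The step from $D_t^0$ to $\gamma^0_t$ is the main, though still routine, technical point, where a change of variables is required. For any candidate $W_t\in L^0(\R^d,\cF_t)$, since $\lambda_t\geq 1>0$ a.s.\ and $\lambda_t$ is $\cF_t$-measurable, the ratio $V_t:=W_t/\lambda_t$ is a well-defined element of $L^0(\R^d,\cF_t)$ with $W_t=\lambda_t V_t$. The inequality just established then gives $D_t^0(\lambda_t V_{t-1},W_t)=D_t^0(\lambda_t V_{t-1},\lambda_t V_t)\geq \delta(\lambda_t)D_t^0(V_{t-1},V_t)\geq \delta(\lambda_t)\gamma^0_t(V_{t-1})$; taking $\essinf_{\cF_t}$ over $W_t$ on the left (the right side being independent of $W_t$) closes the induction. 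The main obstacle to keep an eye on is precisely the bijectivity of the change of variables $V_t\leftrightarrow \lambda_t V_t$ on $L^0(\R^d,\cF_t)$, which is ensured by $\lambda_t\geq 1$ (strict positivity) and its $\cF_t$-measurability.
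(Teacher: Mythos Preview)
Your proof is correct and follows essentially the same backward-induction argument as the paper: base case at $T$ via super $\delta$-homogeneity of $\C_T$, then $\gamma^0_{t+1}\Rightarrow\theta^0_t\Rightarrow D^0_t\Rightarrow\gamma^0_t$ using that $\delta(\lambda_t)$ factors through $\esssup_{\cF_t}$ and $\essinf_{\cF_t}$. The only cosmetic difference is that you make the change of variables $W_t=\lambda_t V_t$ explicit when passing from $D^0_t$ to $\gamma^0_t$, whereas the paper writes directly $\gamma^0_{t}(\lambda_t V_{t-1})=\essinf_{V_t}D^0_t(\lambda_t V_{t-1},\lambda_t V_t)$, leaving that bijection implicit.
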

\begin{proof}  For $t=T$, we have by assumption:
$$\gamma^0_T(\lambda_T V_{T-1})=\C_T((0,-\lambda_T V_{T-1}^{(2)})\ge \delta(\lambda_T)\C_T((0,- V_{T-1}^{(2)})= \delta(\lambda_T)\gamma^0_T( V_{T-1}).$$ 
We deduce that 
\bean \theta_{T-1}^0(\lambda_{T-1} V_{T-1})&=&\esssup_{\cF_{T-1}}\gamma^0_T(\lambda_{T-1}V_{T-1}),\\&\ge&  \delta(\lambda_{T-1})\esssup_{\cF_{T-1}}\gamma^0_T( V_{T-1}),\\
&\ge& \delta(\lambda_{T-1})\theta_{T-1}^0( V_{T-1}). \eean
As we also have $$\C_{T-1}((0,\lambda_{T-1} V_{T-1}^{(2)}-\lambda_{T-1} V_{T-2}^{(2)}))\ge \delta(\lambda_{T-1})\C_{T-1}((0,V_{T-1}^{(2)}- V_{T-2}^{(2)})),$$
we deduce that 
\bean D_{T-1}(\lambda_{T-1} V_{T-2},\lambda_{T-1} V_{T-1})&=&\C_{T-1}((0,\lambda_{T-1} V_{T-1}^{(2)}-\lambda_{T-1} V_{T-2}^{(2)}))+\theta_{T-1}^0(\lambda_{T-1} V_{T-1}),\\
&\ge& \delta(\lambda_{T-1})\C_{T-1}((0,V_{T-1}^{(2)}- V_{T-2}^{(2)}))+\delta(\lambda_{T-1})\theta_{T-1}^0( V_{T-1}),\\
&\ge& \delta(\lambda_{T-1}) D_{T-1}(V_{T-2}, V_{T-1}).
\eean
Therefore, as $\lambda_{T-1}\ge 1$,
\bean \gamma^0_{T-1}(\lambda_{T-1} V_{T-2})&=&\essinf_{V_{T-1}\in L^0(\R^d,\cF_{T-1})}D_{T-1}(\lambda_{T-1} V_{T-2},\lambda_{T-1} V_{T-1}),\\
&\ge& \delta(\lambda_{T-1})\essinf_{V_{T-1}\in L^0(\R^d,\cF_{T-1})}D_{T-1}(V_{T-2},V_{T-1}),\\
&\ge& \delta(\lambda_{T-1})\gamma^0_{T-1}( V_{T-2}).
\eean
We then conclude by induction.
\end{proof}

\begin{lemm} \label{ContinuousBoundD} Suppose that Assumption \ref{g} and Assumption \ref{CondSuppHemmiCont}  hold. For every $t\le T$, there exists  a  continuous function $\hat h_t\ge 0$ such that the function $D_t^\xi$ given by (\ref{D2}) satisfies $|D_t^\xi(s,v_{t-1},0)|\le \hat h_t^{\xi}(s,v_{t-1})$.
\end{lemm}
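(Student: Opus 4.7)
Decompose
\[
D_t^\xi(s,v_{t-1},0)=\C_t(s,(0,-v_{t-1}^{(2)}))+\theta_t^\xi(s,0).
\]
The first summand is bounded in absolute value by $h_t(s,(0,-v_{t-1}^{(2)}))$, which is already continuous in $(s,v_{t-1})$ by the standing domination $|\C_t|\le h_t$. Since the second summand is independent of $v_{t-1}$, it suffices to construct a continuous function $K_t:\R^m\to\R_+$ with $|\theta_t^\xi(s,0)|\le K_t(s)$. By Proposition \ref{esssup} combined with Assumption \ref{CondSuppHemmiCont}, $\theta_t^\xi(s,0)=\sup_{z\in\phi_t(s)}\tilde\gamma_{t+1}^\xi(z,0)$ and $\phi_t(s)\subseteq\bar B(0,R_t(s))$. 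I will therefore prove by backward induction on $t\in\{0,\dots,T\}$ the stronger claim: there exists a deterministic continuous $\bar H_t:\R^m\to\R_+$ such that $|\tilde\gamma_t^\xi(s,0)|\le \bar H_t(s)$ almost surely. Granted this claim,
\[
|\theta_t^\xi(s,0)|\le \sup_{z\in\phi_t(s)}|\tilde\gamma_{t+1}^\xi(z,0)|\le \sup_{z\in\bar B(0,R_t(s))}\bar H_{t+1}(z),
\]
which is continuous in $s$ by Proposition \ref{prop:continuous value} applied to the continuous closed-ball set-valued map from Corollary \ref{ContBall}, giving the desired $\hat h_t^\xi(s,v_{t-1}):=h_t(s,(0,-v_{t-1}^{(2)}))+\sup_{z\in\bar B(0,R_t(s))}\bar H_{t+1}(z)$.

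\textbf{Base case and upper bound in the induction.} At $t=T$, $\tilde\gamma_T^\xi(s,0)=g^1(s)+\C_T(s,(0,g^{(2)}(s)))$ satisfies $|\tilde\gamma_T^\xi(s,0)|\le |g^1(s)|+h_T(s,(0,g^{(2)}(s)))=:\bar H_T(s)$, continuous thanks to the continuity of $g$ (Assumption \ref{g}) and of $h_T$. For the inductive step from $t+1$ to $t$, the upper bound follows from feasibility of the trivial choice $V_t=0$ in the dynamic programming principle of Theorem \ref{DPP}: $\tilde\gamma_t^\xi(s,0)\le D_t^\xi(s,0,0)=\C_t(s,0)+\theta_t^\xi(s,0)=\theta_t^\xi(s,0)$, which is already continuously majorised by the induction hypothesis.

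\textbf{Main obstacle.} The delicate point is producing a continuous lower bound on $\tilde\gamma_t^\xi(s,0)$: since $\gamma_t^\xi(0)$ is an essential infimum over the unrestricted class $L^0(\R^d,\cF_t)$, it could a priori be $-\infty$. My plan is to strengthen the induction hypothesis to a joint continuous bound $|\tilde\gamma_t^\xi(s,v)|\le \bar H_t(s,v)$ in $(s,v)$, and then to replicate the coercivity argument used in the proof of Theorem \ref{PropagTheo}: invoking the positive super $\delta$-homogeneity of $\C_t$ together with the inductive control of $\tilde\theta_t^\xi(s,v)$ forces any minimising sequence for $\essinf_{V_t}D_t^\xi(0,V_t)$ to lie in a random closed ball $\bar B(0,\rho_t(s))$ with $\rho_t$ continuous, after which Corollary \ref{measurability-compact} and Proposition \ref{prop:lsc value} deliver a continuous $s$-dependent lower bound and close the induction. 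This is the single step where the structural hypotheses on $\C_t$ from the surrounding context really enter; everything else is continuity bookkeeping.
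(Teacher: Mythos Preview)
Your upper-bound argument --- carrying a continuous majorant of $\tilde\gamma_{t}^\xi$ backwards via $\theta_t^\xi(s,0)\le\sup_{z\in\bar B(0,R_t(s))}\bar H_{t+1}(z)$ and Corollary~\ref{ContBall}/Proposition~\ref{prop:continuous value} --- is exactly the paper's proof. The paper runs the same induction with the hypothesis $\gamma_t^\xi(S_t,V_{t-1})\le f_t(S_t,V_{t-1})$ for a continuous $f_t$, uses it to bound $\theta_{t-1}^\xi$ and hence $D_{t-1}^\xi(\cdot,\cdot,0)$ from above, and then closes the loop via $\gamma_{t-1}^\xi\le D_{t-1}^\xi(\cdot,\cdot,0)$.

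Where you diverge is the lower bound, and here you should notice that the paper's own proof establishes \emph{only} the upper estimate $D_t^\xi(s,v_{t-1},0)\le \hat h_t^\xi(s,v_{t-1})$; the induction never produces a minorant of $\gamma_t^\xi$. This is in fact all that Theorem~\ref{PropagTheo} consumes: there one only needs $D_t^\xi(s,v_{t-1},0)-\tilde h_t\le\lambda_t$, which follows from the one-sided bound. The absolute value in the statement is stronger than what the given proof yields under Assumptions~\ref{g} and~\ref{CondSuppHemmiCont} alone; the companion lower bound on $\gamma_t^\xi$ is postponed to Lemma~\ref{ContinuousLowerBoundGamma}.

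Your proposed fix for the lower bound has two real problems. First, it imports hypotheses --- positive super-$\delta$-homogeneity of $\C_t$ and $\inf_{z\in S^{d-1}(0,1)}D_t^0(S_t,0,z)>0$ --- that are not part of the lemma. Second, there is a circularity risk: the coercivity radius in Theorem~\ref{PropagTheo} is built from $\hat h_t^\xi$ (this lemma) and from the lower semicontinuity of $i_t(s)=\inf_{z}D_t^0(s,0,z)$, which in turn relies on $\tilde\gamma_{t+1}^0$ being a normal integrand --- itself the conclusion of Theorem~\ref{PropagTheo}. Disentangling this would mean running a joint backward induction on the bounds \emph{and} on lower semicontinuity for both $\xi$ and $0$ under the full SAIP-type assumptions, i.e.\ essentially reproving Theorem~\ref{PropagTheo} inside the lemma. (Also, Proposition~\ref{prop:lsc value} only yields lower semicontinuity, not the continuity of the minorant you claim.)
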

{\sl Proof.} Recall that $\gamma_T^{\xi}(V_T)=g^1(S_T)+\C_T(S_T,(0,g^2(S_T)-V_T^{(2)}))$. By assumption on $\C_T$ and $g$, we deduce that $\gamma_T^{\xi}(V_T)\le f_T(S_T,V_T)$ where $f_T$ is continuous. Therefore, by Proposition \ref{esssup},
\bean \theta_{T-1}^{\xi}(V_{T-1})&=&\esssup_{\cF_{T-1}}\gamma_T^{\xi}(V_{T-1})\le \esssup_{\cF_{T-1}} f_T(S_T,V_{T-1}),\\
&\le &\sup_{z\in {\rm supp}_{\cF_{T-1}}S_{T}}f_T(z,V_{T-1})\le \sup_{z\in \bar B(0,R_{T-1}(S_{T-1}))}f_T(z,V_{T-1}).
\eean
As $R_{T-1}$ is continuous, we deduce by Corollary \ref{ContBall} and Proposition \ref{prop:continuous value} that $\bar\theta_{T-1}^{\xi}(S_{T-1},V_{T-1})=\sup_{z\in \bar B(0,R_{T-1}(S_{T-1}))}f_T(z,V_{T-1})$ is a continuous function in $(S_{T-1},V_{T-1})$. Recall that $\C_{T-1}(S_{T-1},(0,-V_{T-1}^{(2)})\le h_{T-1}(S_{T-1},V_{T-1})$ where $h_{T-1}$ is continuous. As  
$$D_{T-1}^\xi(S_{T-1},V_{T-1},0)=\C_{T-1}(S_{T-1},(0,-V_{T-1}^{(2)})+\theta_{T-1}^{\xi}(V_{T-1}),$$ we deduce that 
$D_{T-1}^\xi(S_{T-1},V_{T-1},0)\le  \hat h^{\xi}_{T-1}(S_{T-1},V_{T-1})$ where $\hat h^{\xi}_{T-1}$ is given by  $\hat h^{\xi}_{T-1}(S_{T-1},V_{T-1})=\bar\theta_{T-1}^{\xi}(S_{T-1},V_{T-1})+ h_{T-1}(S_{T-1},V_{T-1})$, i.e. $\hat h^{\xi}_{T-1}$ is continuous.
Since $\gamma_{T-1}^{\xi}(S_{T-1},V_{T-1})\le D_{T-1}^\xi(S_{T-1},V_{T-1},0)$, we deduce that $\gamma_{T-1}^{\xi}(S_{T-1},V_{T-1})\le \hat h^{\xi}_{T-1}(S_{T-1},V_{T-1})=f_{T-1}(S_{T-1},V_{T-1})$ and we may proceed by induction to conclude. \fdem \smallskip

Following the same arguments, we also deduce the following:

\begin{lemm} \label{ContinuousLowerBoundGamma} Suppose that Assumption \ref{g} and Assumption \ref{CondSuppHemmiCont}  hold. For every $t\le T$, there exists  a  continuous function $\bar h_t$ such that $\gamma_t^{\xi}(V_t)\ge \bar h_t(S_t,V_t)$.
\end{lemm}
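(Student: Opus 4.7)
I would proceed by backward induction on $t$, paralleling the proof of Lemma \ref{ContinuousBoundD} but everywhere replacing the upper bound $|\C_t|\le h_t$ by its lower-bound counterpart $\C_t\ge -h_t$. The base case $t=T$ is immediate: from $\C_T(s,x)\ge -h_T(s,x)$ with $h_T$ deterministic continuous, and $g$ continuous by Assumption \ref{g}, one obtains
\bean
\gamma_T^{\xi}(V_T)=g^1(S_T)+\C_T(S_T,(0,g^{(2)}(S_T)-V_T^{(2)}))\ge g^1(S_T)-h_T(S_T,(0,g^{(2)}(S_T)-V_T^{(2)}))=:\bar h_T(S_T,V_T),
\eean
which is continuous in $(S_T,V_T)$.

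For the inductive step, assume $\gamma_{t+1}^{\xi}(V_{t+1})\ge \bar h_{t+1}(S_{t+1},V_{t+1})$ with $\bar h_{t+1}$ continuous. By Proposition \ref{esssup} and Assumption \ref{CondSuppHemmiCont},
\bean
\theta_t^{\xi}(V_t)=\sup_{z\in\phi_t(S_t)}\tilde\gamma_{t+1}^{\xi}(z,V_t)\ge\sup_{z\in\phi_t(S_t)}\bar h_{t+1}(z,V_t)\ge\inf_{z\in\bar B(0,R_t(S_t))}\bar h_{t+1}(z,V_t)=:\check\theta_t^{\xi}(S_t,V_t),
\eean
the last inequality because $\phi_t(S_t)$ is nonempty and contained in $\bar B(0,R_t(S_t))$. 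Combining Corollary \ref{ContBall} (the ball mapping is continuous) with Proposition \ref{prop:continuous value} yields that $\check\theta_t^{\xi}$ is jointly continuous in $(S_t,V_t)$.

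The key additional ingredient, not needed in the upper bound of Lemma \ref{ContinuousBoundD}, is to turn this pointwise lower bound on $\theta_t^{\xi}$ into a continuous lower bound on $\gamma_t^{\xi}(V_{t-1})=\essinf_{V_t}D_t^{\xi}(S_t,V_{t-1},V_t)$, whose infimum ranges over all of $L^0(\R^d,\cF_t)$. My plan is to exploit the following concatenation identity: any $\varepsilon$-optimal trajectory hedging $\xi$ from $V_{t-1}$ with total cost $\gamma_t^{\xi}(V_{t-1})+\varepsilon$ can be prepended by an additional transaction at time $t$ acquiring the risky position $V_{t-1}^{(2)}$ from zero, at cost $\C_t(S_t,(0,V_{t-1}^{(2)}))$, yielding an admissible strategy starting from the zero portfolio. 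Hence $\gamma_t^{\xi}(0)\le \gamma_t^{\xi}(V_{t-1})+\C_t(S_t,(0,V_{t-1}^{(2)}))$, and letting $\varepsilon\downarrow 0$ we get
\bean
\gamma_t^{\xi}(V_{t-1})\ge\gamma_t^{\xi}(0)-\C_t(S_t,(0,V_{t-1}^{(2)}))\ge\gamma_t^{\xi}(0)-h_t(S_t,(0,V_{t-1}^{(2)})).
\eean
Setting $\bar h_t(s,v):=\psi_t(s)-h_t(s,(0,v^{(2)}))$ for a continuous $S_t$-measurable lower bound $\psi_t$ of $\gamma_t^{\xi}(0)$ then completes the induction.

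The main obstacle is producing such a $\psi_t$. Unlike in Theorem \ref{PropagTheo}, no coercivity estimate from super-$\delta$-homogeneity is available, so the infimum defining $\gamma_t^{\xi}(0)=\essinf_{V_t}[\C_t(S_t,(0,V_t^{(2)}))+\theta_t^{\xi}(S_t,V_t)]$ cannot a priori be restricted to a bounded set of $V_t$. My proposal is to localize this infimum by combining the finite upper bound on $\gamma_t^{\xi}$ from Lemma \ref{LemmaGammaFini} (using the hedgeability granted by Assumption \ref{g}) with the continuous lower bound $\check\theta_t^{\xi}$, showing that an $\varepsilon$-optimizer can be chosen in a ball of radius depending continuously on $S_t$; then Corollary \ref{measurability-compact} and Proposition \ref{prop:continuous value} give the continuous extraction of $\psi_t(S_t)$. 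The continuous lower bound thus propagates through the recursion, driven entirely by the continuity of the $h_s$'s, of $R_s$, and of the terminal function $g$.
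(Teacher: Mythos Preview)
Your base case and the passage from $\gamma_{t+1}^{\xi}$ to a continuous lower bound $\check\theta_t^{\xi}$ on $\theta_t^{\xi}$ are correct, and they do mirror the argument of Lemma~\ref{ContinuousBoundD} (which is all the paper invokes---it gives no separate proof beyond ``following the same arguments''). You are also right that the infimum step from $\theta_t^{\xi}$ to $\gamma_t^{\xi}$ is a genuine additional difficulty that has no counterpart in Lemma~\ref{ContinuousBoundD}, where the choice $V_t=0$ sufficed for an upper bound.

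However, your proposed fix has two gaps. First, the concatenation inequality $\gamma_t^{\xi}(0)\le \gamma_t^{\xi}(V_{t-1})+\C_t(S_t,(0,V_{t-1}^{(2)}))$ is not valid under Assumptions~\ref{g} and~\ref{CondSuppHemmiCont} alone. In the definition of $\gamma_t^{\xi}$ there is exactly one transaction at time $t$; replacing the starting point $V_{t-1}^{(2)}$ by $0$ changes the first cost from $\C_t(0,V_t^{(2)}-V_{t-1}^{(2)})$ to $\C_t(0,V_t^{(2)})$, and without sub-additivity of $\C_t$ there is no reason why $\C_t(0,V_t^{(2)})\le \C_t(0,V_{t-1}^{(2)})+\C_t(0,V_t^{(2)}-V_{t-1}^{(2)})$. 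Second, even granting that inequality, your localization of $\gamma_t^{\xi}(0)$ is not justified: knowing a finite upper bound on $\gamma_t^{\xi}(0)$ together with the lower bound $D_t^{\xi}(S_t,0,v_t)\ge -h_t(S_t,(0,v_t^{(2)}))+\check\theta_t^{\xi}(S_t,v_t)$ gives no coercivity, since both $-h_t$ and $\check\theta_t^{\xi}$ may tend to $-\infty$ as $|v_t|\to\infty$. There is therefore no mechanism to confine $\varepsilon$-optimizers to a ball whose radius depends continuously on $S_t$, and the appeal to Proposition~\ref{prop:continuous value} is unsupported. In short, the step ``infimum over all $V_t$'' genuinely requires a growth condition (such as the super $\delta$-homogeneity used in Theorem~\ref{PropagTheo}) that neither your argument nor the paper's one-line reference supplies.
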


\bigskip
{\small On behalf of the organizing committee, Emmanuel Lépinette thanks the "Fondation Natixis pour la recherche et l'innovation" for their financial contribution to the Bachelier Colloquium (ETH Zurich (main sponsor) and Franche-Comté university), Metabief, France. }

\end{document}